\newtheorem{theorem}{Theorem}
\newtheorem{corollary}[theorem]{Corollary}
\newtheorem{lemma}[theorem]{Lemma}
\newtheorem{proposition}[theorem]{Proposition}
\newtheorem{example}[theorem]{Example}
\numberwithin{theorem}{section}
\numberwithin{equation}{section}
\numberwithin{table}{section}
\newtheorem{remark}[theorem]{Remark}
\newcommand{\bb}{\mathbb}
\newcommand{\Z}{\bb{Z}}
\newcommand{\Q}{\bb{Q}}
\newcommand{\cQ}{\mathcal{Q}}
\newcommand{\ac}{a_{\chi}}
\newcommand{\al}{\alpha}
\newcommand{\bc}{b_{\chi}}
\newcommand{\la}{\lambda}
\newcommand{\ze}{\zeta}
\newcommand{\sig}{\sigma}
\newcommand{\pr}{\prime}
\newcommand{\sm}{\setminus}
\newcommand{\ol}{\overline}
\newcommand{\lan}{\langle}
\newcommand{\ran}{\rangle}
\newcommand{\wti}{\widetilde}
\newcommand{\wh}{\widehat}
\newcommand{\lc}{\lceil}
\newcommand{\rc}{\rceil}
\newcommand{\F}{\mathbb{F}}
\newcommand{\alj}{\al_{j,\chi}}
\newcommand{\alp}{{\al^+_\chi}}
\newcommand{\alm}{{\al^-_\chi}}
\newcommand{\lpc}{{\ell^+_\chi}}
\newcommand{\lmc}{{\ell^-_\chi}}
\newcommand{\zpe}{\zeta_{p^e}}
\newcommand{\Ft}{\F_3}
\newcommand{\Ftf}{\F_3^5}
\newcommand{\PG}{\text{PG}}
\newcommand{\GL}{\text{GL}}
\newcommand{\PGL}{\text{PGL}}
\long\def\symbolfootnote[#1]#2{\begingroup%
\def\thefootnote{\fnsymbol{footnote}}\footnote[#1]{#2}\endgroup}
\begin{document}

\title{Construction and nonexistence of strong external difference families}
\author{Jonathan Jedwab \and Shuxing Li}
\date{20 January 2017 (revised 15 November 2017)}
\maketitle

\symbolfootnote[0]{
Department of Mathematics, Simon Fraser University, 8888 University Drive, Burnaby BC V5A 1S6, Canada.
\par
J.~Jedwab is supported by NSERC.
\par
Email: {\tt jed@sfu.ca}, {\tt shuxing\_li@sfu.ca}
}

\begin{abstract}
Strong external difference families (SEDFs) were introduced by Paterson and Stinson as a more restrictive version of external difference families. SEDFs can be used to produce optimal strong algebraic manipulation detection codes.
We characterize the parameters $(v, m, k, \la)$ of a nontrivial SEDF that is near-complete (satisfying $v=km+1$). We construct the first known nontrivial example of a $(v, m, k, \la)$ SEDF having $m > 2$. The parameters of this example are $(243,11,22,20)$, giving a near-complete SEDF, and its group is $\Z_3^5$.
We provide a comprehensive framework for the study of SEDFs using character theory and algebraic number theory, showing that the cases $m=2$ and $m>2$ are fundamentally different. We prove a range of nonexistence results, greatly narrowing the scope of possible parameters of SEDFs.

\smallskip
\noindent \textbf{Keywords.} Construction, exponent bound, near-complete, nonexistence, strong external difference family.
\end{abstract}

\section{Introduction}
\label{sec-intro}
Let $G$ be an abelian group of order $v$ with identity~$1$. We shall work in the setting of the group ring $\Z[G]$: given a subset $D$ of $G$, we write the group ring element $\sum_{d \in D} d$ as $D$ (by a standard abuse of notation), and the group ring element $\sum_{d \in D} d^{-1}$ as $D^{(-1)}$.
Let $D_1, D_2, \dots, D_m$ be mutually disjoint $k$-subsets of $G$, where $m \ge 2$, and let $\la$ be a positive integer. Then $\{D_1, D_2, \dots, D_m\}$ is a $(v, m, k, \la)$-\emph{external difference family} in $G$ if
\begin{equation}
\sum_{\substack{1\le i,j \le m \\ i \ne j}} D_jD_i^{(-1)}=\la(G-1) \quad \mbox{in $\Z[G]$},
\end{equation}
and is a $(v, m, k, \la)$-\emph{strong external difference family} (SEDF) in $G$ if
\begin{equation}\label{eqn-def}
D_j \sum_{\substack{1 \le i \le m \\ i \ne j}} D_i^{(-1)}=\la(G-1) \quad \mbox{in $\Z[G]$ for each $j$ satisfying $1 \le j \le m$}.
\end{equation}
The use of ``strong'' arises because a $(v,m,k,\la)$-SEDF is necessarily a $(v,m,k,m\la)$-external difference family.

External difference families have applications in authentication codes and secret sharing \cite{OKSS}. An external difference family in a cyclic group gives rise to difference systems of sets \cite{CD}, which can be applied to construct synchronization codes \cite{Lev}.
Paterson and Stinson \cite{PS} introduced SEDFs and showed how to produce optimal strong algebraic manipulation detection codes from them.
Algebraic manipulation detection codes have many applications, including robust secret sharing schemes, secure multiparty computation, and non-malleable codes \cite{CDF+,CFP,CPX}.
A succession of recent papers has demonstrated that SEDFs are interesting combinatorial objects in their own right: see Proposition~\ref{prop-known} below for a summary of constructive results, Proposition~\ref{prop-smallla} for a characterization result, and Proposition~\ref{prop-nonexistence} for a selection of nonexistence results.

The parameters of a $(v,m,k,\la$)-SEDF satisfy the counting relation
\begin{equation}\label{eqn-counting}
k^2(m-1)=\la(v-1).
\end{equation}
A $(v,m,k,\la)$-SEDF is {\it trivial} if $k=1$; it follows from \eqref{eqn-counting} that the parameters of a trivial SEDF have the form $(v,v,1,1)$, and an SEDF with these parameters exists (trivially) in every group of order~$v$.
The following proposition describes the parameters and groups of the known nontrivial SEDFs, all of which satisfy $m=2$.

\begin{proposition}\label{prop-known}
A $(v,m,k,\la)$-SEDF exists in the group $G$ in each of the following cases:
\begin{enumerate}[(1)]
\item $(v,m,k,\la)=(k^2+1,2,k,1)$ and $G=\Z_{k^2+1}$ {\rm\cite[Example 2.2]{PS}}.
\item $(v,m,k,\la)=(v,2,\frac{v-1}{2},\frac{v-1}{4})$ and $v \equiv 1 \pmod4$, provided there exists a $(v,\frac{v-1}{2},\frac{v-5}{4},\frac{v-1}{4})$ partial difference set in $G$ {\rm\cite[Section 3]{DHM}}, {\rm\cite[Theorem 4.4]{HP}}.
\item $(v,m,k,\la)=(p,2,\frac{p-1}{4},\frac{p-1}{16})$ where $p=16t^2+1$ is a prime and $t$ is an integer, and $G=\Z_p$ {\rm \cite[Theorem 4.3]{BJWZ}}.
\item $(v,m,k,\la)=(p,2,\frac{p-1}{6},\frac{p-1}{36})$ where $p=108t^2+1$ is a prime and $t$ is an integer, and $G=\Z_p$ {\rm \cite[Theorem 4.6]{BJWZ}}.
\end{enumerate}
\end{proposition}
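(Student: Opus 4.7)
The plan is to handle each of the four items by exhibiting a construction from the cited reference and verifying the defining identity~\eqref{eqn-def}. A useful initial reduction is that all four cases have $m = 2$, so~\eqref{eqn-def} collapses to the single group-ring identity $D_1 D_2^{(-1)} = \la (G - 1)$; applying the involution $X \mapsto X^{(-1)}$ (under which $G - 1$ is fixed) then recovers the $j = 2$ instance automatically. Each proof therefore amounts to a single external-difference computation for a pair of sets.

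For (1), one chooses $D_1, D_2 \subseteq \Z_{k^2+1}$ of size $k$ so that the $k^2 = v - 1$ ``differences'' counted by $D_1 D_2^{(-1)}$ enumerate each non-identity element of $\Z_{k^2+1}$ exactly once. A concrete choice of arithmetic-progression type (for instance $D_1 = \{1,2,\dots,k\}$ and $D_2 = \{k+1, 2(k+1), \dots, k(k+1)\}$) is given in \cite[Example 2.2]{PS}, and the verification is an elementary bijection argument: since $|D_1|\cdot|D_2| = k^2 = v - 1$, showing that the $k^2$ differences are distinct and nonzero is equivalent to showing that they cover all non-identity elements, and both statements reduce to an easy divisibility computation modulo $k^2 + 1$.

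For (2), given the (necessarily regular, since $\tfrac{v-5}{4} \neq \tfrac{v-1}{4}$) partial difference set $S$, one sets $D_1 = S$ and $D_2 = G - 1 - S$. The partial-difference-set identity
\[
S S^{(-1)} = \frac{v-1}{2}\cdot 1 + \frac{v-5}{4}\, S + \frac{v-1}{4}(G - 1 - S)
\]
is the sole input required. A direct expansion of $D_1 D_2^{(-1)} = \tfrac{v-1}{2} G - S - S S^{(-1)}$ followed by substitution yields $\tfrac{v-1}{4}(G - 1)$ after the $S$- and constant-coefficients cancel; this is routine.

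For (3) and (4), the sets $D_1, D_2$ are taken to be cyclotomic classes of $\Z_p$ of index~$4$ and index~$6$ respectively, with the precise pair specified in~\cite{BJWZ}. The verification amounts to showing that the coefficient of every non-identity element of $\Z_p$ in $D_1 D_2^{(-1)}$ is the same constant, which is a statement about cyclotomic numbers of the corresponding order. The special forms $p = 16t^2 + 1$ and $p = 108t^2 + 1$ are precisely what force the relevant cyclotomic numbers to take the constant values required for the SEDF condition. I expect the main technical obstacle to lie here: the explicit evaluation of these cyclotomic numbers, which in~\cite{BJWZ} is handled via classical formulas for cyclotomy of small order together with the decomposition of $p$ into the stated quadratic form.
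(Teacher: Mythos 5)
The paper offers no proof of this proposition: it is a catalogue of constructions established in the cited references, so your plan of re-deriving those constructions is consistent with (and goes beyond) what the paper itself does. Your $m=2$ reduction is correct—applying the involution $X\mapsto X^{(-1)}$ to $D_1D_2^{(-1)}=\lambda(G-1)$ gives the $j=2$ identity since $G$ is abelian and $(G-1)^{(-1)}=G-1$—and your computation in case (2) is right: expanding $S(G-1-S^{(-1)})=\tfrac{v-1}{2}G-S-SS^{(-1)}$ and substituting the PDS identity does yield $\tfrac{v-1}{4}(G-1)$.

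There is, however, a concrete error in case (1): the pair you offer is not an SEDF and is not the construction of \cite[Example 2.2]{PS}. With $D_1=\{1,\dots,k\}$ and $D_2=\{k+1,2(k+1),\dots,k(k+1)\}$ in $\mathbb{Z}_{k^2+1}$, the two sets are not even disjoint, since $k(k+1)=k^2+k\equiv k-1\pmod{k^2+1}$ and $k-1\in D_1$ for every $k\ge 2$ (for $k=2$ one gets $D_2=\{3,1\}$, which meets $D_1=\{1,2\}$). A correct choice of the same flavour is $D_1=\{0,1,\dots,k-1\}$ and $D_2=\{k,2k,\dots,k^2\}$: every integer $n\in\{1,\dots,k^2\}$ is uniquely of the form $jk-i$ with $1\le j\le k$ and $0\le i\le k-1$, so the $k^2$ external differences $i-jk$ cover the nonzero residues exactly once, giving $D_1D_2^{(-1)}=G-1$. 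Finally, for cases (3) and (4) you correctly locate the substance (cyclotomic classes of index $4$ and $6$, cyclotomic numbers forced by $p=16t^2+1$ and $p=108t^2+1$), but you neither specify which classes are taken nor carry out the cyclotomic-number evaluation; since the paper itself only cites \cite{BJWZ} here, that deferral is defensible for a survey-type statement, but as a standalone proof your write-up is incomplete at exactly that point.
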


\begin{remark}
Proposition~\ref{prop-known}~(3) describes a construction which was presented in \cite[Theorem 4.3]{BJWZ} with a prime power $q$ in place of the prime $p$ and with $G=\F_q$ in place of $G=\Z_p$. However, if $q=16t^2+1$ is a prime power and $t$ is an integer, then $q$ must be a prime because Catalan's conjecture is known to hold~\cite{M}. 
\end{remark}

When $\la = 1$, the parameters of a nontrivial $(v,m,k,\la)$-SEDF have been characterized.

\begin{proposition}\label{prop-smallla}
A nontrivial $(v,m,k,1)$-SEDF exists if and only if $m=2$ and $v=k^2+1$ {\rm \cite[Theorem 2.3]{PS}}.
\end{proposition}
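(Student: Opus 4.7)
The ``if'' direction is Proposition~\ref{prop-known}(1). For the converse, suppose a $(v,m,k,1)$-SEDF with $k \ge 2$ exists. The plan is to rule out $m \ge 3$ via character theory applied to~\eqref{eqn-def}.

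First, for an arbitrary nontrivial character $\chi$ of $G$, set $a_j = \chi(D_j)$ and $s = \chi(D_1 + \cdots + D_m)$. Applying $\chi$ to~\eqref{eqn-def} and using $\chi(G) = 0$ gives $a_j \overline{s} = |a_j|^2 - 1$ for every $j$, which already forces $a_j \ne 0$. Writing $y_j = |a_j|^2$ and taking squared moduli yields the quadratic $y_j^2 - (|s|^2 + 2)y_j + 1 = 0$, so each $y_j$ belongs to $\{y^+, y^-\}$ with $y^+ y^- = 1$. Summing the defining identity over $j$ and organizing by the multiplicities $m^\pm$ (with $m^+ + m^- = m$) then produces the key relation $(m^+ - 1)y^+ + (m^- - 1)y^- = m - 2$.

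Next I will argue that for $m \ge 3$ this relation forces $y^+ = y^- = 1$, equivalently $s = 0$. In the contrary case $y^+ > 1 > y^-$, a short check on the key relation eliminates $m^\pm \in \{0, 1\}$, so $m^+, m^- \ge 2$ (and hence $m \ge 4$). Eliminating $y^- = 1/y^+$ then gives an integer quadratic whose discriminant is $(m^+ - m^-)^2$, and the unique root larger than~$1$ is the rational number $y^+ = (m^- - 1)/(m^+ - 1)$. Since $y^+ = |\chi(D_j)|^2$ is an algebraic integer lying in $\Z[\zeta_n]$ (for $n$ the exponent of $G$), it must be an integer $\ge 2$; the same conclusion applies to $y^-$, contradicting $0 < y^- = 1/y^+ < 1$.

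Finally, with $\chi(D_1 + \cdots + D_m) = 0$ for every nontrivial $\chi$, Fourier inversion gives $D_1 + \cdots + D_m = (mk/v)\,G$ in $\Z[G]$; since the left-hand side has coefficients in $\{0,1\}$, we must have $mk = v$, and substituting into~\eqref{eqn-counting} forces $k = 1$, contradicting $k \ge 2$. Therefore $m = 2$, and~\eqref{eqn-counting} gives $v = k^2 + 1$. The main obstacle I anticipate is the careful case analysis pinning down $(m^+, m^-)$ together with the algebraic-integer step that rules out a non-integral rational value of $y^+$.
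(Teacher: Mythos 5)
Your argument is correct, and it is worth noting that the paper does not actually prove Proposition~\ref{prop-smallla} at all: it simply cites \cite[Theorem 2.3]{PS}. So what you have produced is a self-contained proof, and it runs on essentially the same engine that the paper builds in Section~\ref{sec-char} for general $\la$, specialized to $\la=1$. Your quadratic $y_j^2-(|s|^2+2)y_j+1=0$ with $y^+y^-=1$ is the $\la=1$ instance of \eqref{eqn-quadeqn}--\eqref{eqn-sols} (there phrased in terms of $\al_{j,\chi}$), your multiplicity relation $(m^+-1)y^+ + (m^--1)y^- = m-2$ plays the role of \eqref{eqn-solsfre}, your rationality-plus-algebraic-integrality step (a rational value of $|\chi(D_j)|^2$ must be a rational integer, which kills $0<y^-=1/y^+<1$) is the $\la=1$ case of the divisibility argument in Lemma~\ref{lem-abpair}(2), and your final Fourier-inversion step (if $\chi(D)=0$ for all nonprincipal $\chi$ then $km=v$, forcing $k=1$) is exactly the proof of Lemma~\ref{lem-GNnonempty} combined with \eqref{eqn-counting}. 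The verification details all check out: $a_j\ol{s}=|a_j|^2-1$ forces $a_j\ne0$; the key relation is valid even when one of $m^\pm$ is $0$ because it comes from the sum of the roots, so your elimination of $m^\pm\in\{0,1\}$ (using $m\ge3$) is legitimate; the quadratic $(m^+-1)t^2-(m-2)t+(m^--1)=0$ indeed has roots $1$ and $(m^--1)/(m^+-1)$, and since $m^+,m^-\ge2$ both values $y^\pm$ are attained, so both are algebraic integers and the integrality contradiction goes through. One cosmetic point: the phrase ``the same conclusion applies to $y^-$'' should say that $y^-$ is likewise a rational algebraic integer and hence an integer (not that it is $\ge 2$); the contradiction with $0<y^-<1$ is then exactly as you state. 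The one thing the paper's machinery buys over your specialized computation is reusability: the same framework with general $\la$ yields Lemma~\ref{lem-abpair} and all of Section~\ref{sec-m>2}, whereas your argument exploits the simplification $y^+y^-=1$ that is special to $\la=1$.
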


The following proposition describes parameter sets $(v,m,k,\la)$ for which a nontrivial SEDF is known not to exist in all groups of order~$v$.

\begin{proposition}\label{prop-nonexistence}
A nontrivial $(v,m,k,\la)$-SEDF does not exist in each of the following cases:
\begin{enumerate}[(1)]
\item $m \in \{3,4\}$ {\rm \cite[Theorems 3.3 and 3.6]{MS}}
\item $m>2$ and $v$ is prime {\rm \cite[Theorem 3.9]{MS}}
\item $m>2$ and $\la=2$ {\rm \cite[Corollary 3.2]{HP}}
\item $m>2$ and $\la>1$ and $\frac{\la(k-1)(m-2)}{(\la-1)k(m-1)}>1$ {\rm \cite[Theorem 3.5]{HP}}
\item $m>2$ and there is a prime $p$ dividing $v$ for which $\gcd(km,p)=1$ and $m \not\equiv 2 \pmod{p}$ {\rm \cite[Theorem 3.5]{BJWZ}}.
\item $\la \ge k$ {\rm \cite[Lemma 1.1]{BJWZ}}
\end{enumerate}
\end{proposition}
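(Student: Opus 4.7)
The six items of Proposition~\ref{prop-nonexistence} are drawn from separate sources, so I would treat each with the technique best suited to it, unified by one observation: applying an arbitrary non-trivial character $\chi$ of $G$ to \eqref{eqn-def} and setting $a_i = \chi(D_i)$ yields, since $\chi(G) = 0$,
\[
a_j \sum_{\substack{1 \le i \le m \\ i \ne j}} \overline{a_i} = -\la \quad \text{for each } j \in \{1, \dots, m\}.
\]
This character system, together with the counting relation \eqref{eqn-counting}, is the engine for all of the character-theoretic arguments below.

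Two of the six cases are essentially elementary. For (6), the inequality $\la \ge k$ inserted into \eqref{eqn-counting} gives $v \le k(m-1)+1$, while disjointness of the $D_i$ requires $v \ge mk$; combining these forces $k \le 1$, so only the trivial SEDF survives. For (4), I would derive from the displayed character system that $S := \sum_i a_i \ne 0$ and that $a_j + \la/\overline{a_j} = S$ is independent of $j$, so each $a_j$ is determined by $|a_j|^2$ via $a_j = (|a_j|^2 + \la)/\overline{S}$; summing over $j$ gives $\sum_j |a_j|^2 = |S|^2 - m\la$, which combined with the bounds forced by \eqref{eqn-counting} produces the stated fractional inequality as precisely the threshold for a contradiction. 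Part (3), the case $\la = 2$ with $m > 2$, then reduces to (4) for $m \ge 5$ and to (1) for $m \in \{3,4\}$.

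The remaining three cases extract further structural information from the same character system. For (1) I would specialize to $m \in \{3,4\}$: with only three or four unknowns $a_j$ satisfying the quadratic constraints, one argues that all $|a_j|$ must coincide, after which standard algebraic-integer arguments on $a_j \in \Z[\zeta]$ (for a suitable root of unity $\zeta$), together with \eqref{eqn-counting}, eliminate solutions. For (2), with $v$ prime the only possibility is $G = \Z_v$; the non-trivial characters are Galois conjugate over $\Q(\zeta_v)$, and the Galois-invariant version of the character system turns out to be too rigid for $m > 2$. The main obstacle is (5): for a prime $p \mid v$ with $\gcd(km,p) = 1$, one must choose $\chi$ of order exactly $p$, work in $\Z[\zeta_p]$, reduce modulo the prime ideal $(1 - \zeta_p)$, and exploit $m \not\equiv 2 \pmod{p}$ to force an incompatible residue for $\sum_i a_i$. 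Threading these congruences through the character system, without making circular assumptions about the sizes of the $a_i$ in the cyclotomic field, is the subtle point, and this is the step I expect to demand the most care.
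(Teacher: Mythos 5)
First, a point of comparison: the paper does not prove Proposition~\ref{prop-nonexistence} at all --- it is a survey of results quoted from \cite{MS}, \cite{HP}, \cite{BJWZ}; the only item re-derived in-paper is (1), which falls out of the character framework of Section~\ref{sec-char} via Lemma~\ref{lem-abpair} and Corollary~\ref{cor-m34}. Your plan therefore attempts more than the paper does, and parts of it are sound: the opening character identity is exactly \eqref{eqn-chardef}, and your argument for (6) (insert $\la\ge k$ into \eqref{eqn-counting} to get $k(m-1)\ge v-1$, combine with $v\ge km$, conclude $k\le1$) is complete and matches the standard proof.

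However, several of the remaining items have genuine gaps. For (1), your key step ``all $|\chi(D_j)|$ must coincide'' is not only unproved but contradicts what the character system forces: for any $\chi$ with $\chi(D)\ne0$, \eqref{eqn-solsfre} gives $\lpc,\lmc\ge1$, so by \eqref{eqn-chiDj} both values $\alp\chi(D)$ and $\alm\chi(D)$ occur among the $\chi(D_j)$, and these have distinct moduli (note $\alp>1$ while $|\alm|=\alp-1$). The correct mechanism --- and the one the paper uses --- is integrality of $\lpc,\lmc$: writing $\sqrt{1+4\la/|\chi(D)|^2}=\bc/\ac$ in lowest terms, Lemma~\ref{lem-abpair} gives $\bc\mid m-2$, which is impossible for $m=3$ and forces $(\ac,\bc)=(1,2)$, hence $\lpc=3/2$, for $m=4$. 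For (4), your assertion that $S=\chi(D)\neq0$ for an arbitrary nonprincipal $\chi$ is false in general (only the existence of some such $\chi$ is guaranteed, Lemma~\ref{lem-GNnonempty}); there are also sign slips ($\chi(D_j)\ol{S}=|\chi(D_j)|^2-\la$, so $\sum_j|\chi(D_j)|^2=|S|^2+m\la$), and the crucial step --- how these identities produce the specific threshold $\frac{\la(k-1)(m-2)}{(\la-1)k(m-1)}>1$ --- is not carried out; the cited proof in \cite{HP} rests on a global counting/Parseval argument, not on a single character. For (3), the reduction to (4) misses the boundary case $(v,m,k,\la)=(19,5,3,2)$, where the ratio equals $1$ rather than exceeding it, so an extra argument (e.g.\ item (2), since $19$ is prime) is needed. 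Finally, (2) (``the Galois-invariant version turns out to be too rigid'') and (5) (reduce modulo $(1-\ze_p)$ and ``force an incompatible residue'') are statements of intent rather than proofs; as written they could not be checked or completed without essentially redoing the arguments of \cite{MS} and \cite{BJWZ}.
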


It is known \cite[Lemma 1.2]{MS} that if $v=km$, then an $(v,m,k,\la)$-SEDF is necessarily trivial. The same proof idea as in \cite{MS} gives the following generalization.

\begin{lemma}\label{lem-trivial}
Suppose there exists a $(v,m,k,\la)$-SEDF for which $\gcd(k,v-1)=1$. Then the SEDF is trivial.
\end{lemma}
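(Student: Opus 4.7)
The plan is to deduce a divisibility constraint from the counting relation \eqref{eqn-counting} and combine it with the fact that $D_1,\dots,D_m$ are disjoint subsets of $G$.

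First, I would start from the counting relation $k^2(m-1) = \la(v-1)$, which gives $(v-1) \mid k^2(m-1)$. The hypothesis $\gcd(k,v-1)=1$ implies $\gcd(k^2,v-1)=1$, so in fact $(v-1) \mid (m-1)$. Since $m \ge 2$ by definition of an SEDF, $m-1 \ge 1$, and therefore $v-1 \le m-1$, i.e.\ $v \le m$.

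Next I would use the disjointness hypothesis: since $D_1,\dots,D_m$ are mutually disjoint $k$-subsets of a group of order $v$, we have $mk \le v$. Combining this with $v \le m$ gives $mk \le m$, hence $k \le 1$. Because $k \ge 1$ (the $D_i$ are nonempty), we conclude $k=1$, i.e.\ the SEDF is trivial.

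There is no real obstacle here; the argument is a short consequence of \eqref{eqn-counting} and the pigeonhole bound $mk \le v$. The only step worth flagging is the passage from $(v-1)\mid k^2(m-1)$ to $(v-1)\mid (m-1)$, which uses the coprimality hypothesis in the form $\gcd(k^2, v-1) = 1$.
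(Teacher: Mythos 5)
Your proof is correct and follows essentially the same route as the paper: use $\gcd(k,v-1)=1$ in the counting relation \eqref{eqn-counting} to deduce $m-1 \ge v-1$ (the paper gets this via $k^2 \mid \la$, you via $(v-1)\mid(m-1)$, a cosmetic difference), then combine with the pigeonhole bound $km \le v$ to force $k=1$.
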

\begin{proof}
The counting relation \eqref{eqn-counting} gives $m-1 = \frac{\la}{k^2} (v-1)$.
Since $\gcd(k,v-1)=1$, it follows that $\la/k^2$ is an integer and so $m-1 \ge v-1$. Since $v \ge km$, this implies that $k=1$.
\end{proof}

Lemma~\ref{lem-trivial} implies that the parameters of a nontrivial $(v,m,k,\la)$-SEDF $\{D_1, D_2, \dots, D_m\}$ satisfy $v > km$ and, by taking a translate of all the subsets $D_j$ if necessary, we may therefore assume that $1 \notin \bigcup_{j=1}^m D_j$. In the extremal case $v=km+1$, the subsets $D_1,D_2,\ldots,D_m$ partition the nonidentity elements of the group $G$ and (following \cite{DHM}) we call the SEDF \emph{near-complete}.

In this paper, we present constructive and nonexistence results for nontrivial SEDFs using character theory and algebraic number theory.
In Section~\ref{sec-char}, we give a character-theoretic framework for the study of SEDFs and demonstrate that the cases $m=2$ and $m>2$ are fundamentally different.
In Section~\ref{sec-near-complete}, we characterize the parameters of a nontrivial near-complete SEDF by establishing an equivalence with a collection of partial difference sets. In particular, we construct a near-complete $(243,11,22,20)$-SEDF in $\Z_3^5$ by reference to the point-orbits of the Mathieu group $M_{11}$ acting on the projective geometry $PG(4,3)$. This is the first known nontrivial example of an SEDF with $m>2$.
In Section~\ref{sec-exp-bound}, we use algebraic number theory to obtain an exponent bound on a group containing a SEDF and apply it to rule out various SEDFs with $m=2$, leaving only 5 open cases for the parameters of a $(v,m,k,\la)$ SEDF with $v \le 50$ and $m=2$.
In Section~\ref{sec-m>2} we obtain nonexistence results for SEDFs with $m>2$, introducing the ``simple character value property'' under which strong necessary conditions can be derived.
This leaves only 70 open cases for the parameters of a $(v,m,k,\la)$ SEDF with $v \le 10^4$ and $m>2$.

\section{A character-theoretic approach}
\label{sec-char}

Let $\wh{G}$ denote the character group of an abelian group~$G$, and let $\chi_0 \in \wh{G}$ be the principal character. Each character $\chi \in \wh{G}$ is extended linearly to the group ring $\Z[G]$. The following formula is a consequence of the orthogonality properties of characters.

\begin{proposition}[Fourier inversion formula] \label{prop-fourier}
Let $G$ be an abelian group and let $A=\sum_{g \in G} c_gg \in \Z[G]$. Then
$$
c_g=\frac{1}{|G|}\sum_{\chi \in \wh{G}} \chi(A)\ol{\chi(g)} \quad \mbox{for each $g \in G$}.
$$
\end{proposition}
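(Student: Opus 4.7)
The plan is to evaluate the right-hand side of the claimed identity directly by expanding $\chi(A)$ via linearity and then invoking the orthogonality of characters on an abelian group. The one nonroutine input is the orthogonality relation itself; everything else is a bookkeeping exercise of interchanging two finite sums.

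First I would recall (or quickly prove) the second orthogonality relation: for any abelian group $G$ and any $h \in G$,
$$
\sum_{\chi \in \wh{G}} \chi(h) \; = \; \begin{cases} |G| & \mbox{if $h=1$,}\\ 0 & \mbox{otherwise.}\end{cases}
$$
Applying this to $gh^{-1}$ and using $\ol{\chi(g)} = \chi(g^{-1})$ (valid because character values are roots of unity) yields the pointwise version
$$
\sum_{\chi \in \wh{G}} \chi(h) \ol{\chi(g)} \; = \; |G| \cdot [h=g],
$$
where $[h=g]$ is $1$ when $h=g$ and $0$ otherwise. This is exactly the tool needed.

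Next I would start from the right-hand side of the claim, substitute $A=\sum_{h\in G} c_h h$, and use the linear extension of $\chi$ to $\Z[G]$:
$$
\frac{1}{|G|}\sum_{\chi \in \wh{G}} \chi(A)\ol{\chi(g)}
\;=\; \frac{1}{|G|}\sum_{\chi \in \wh{G}} \sum_{h \in G} c_h \, \chi(h)\ol{\chi(g)}.
$$
Swapping the two finite sums and pulling out $c_h$ gives
$$
\frac{1}{|G|}\sum_{h \in G} c_h \sum_{\chi \in \wh{G}} \chi(h)\ol{\chi(g)}
\;=\; \frac{1}{|G|}\sum_{h \in G} c_h \cdot |G| \cdot [h=g]
\;=\; c_g,
$$
where the penultimate step is the orthogonality relation above.

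The only real obstacle is having the orthogonality relation available; this is a standard fact about the dual group of a finite abelian group (e.g.\ by decomposing $G$ as a product of cyclic groups and using the geometric series for roots of unity, or abstractly by noting that $\sum_{\chi} \chi$ is a central idempotent in the group algebra supported at the identity). Once this is granted, the argument above is a two-line computation and requires no further ideas.
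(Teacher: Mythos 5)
Your proof is correct and follows exactly the route the paper intends: the paper gives no separate argument, stating only that the formula ``is a consequence of the orthogonality properties of characters,'' which is precisely the expand--swap--orthogonality computation you carry out (implicitly using that $G$ is finite, as it is throughout the paper). No issues to report.
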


Suppose $\{D_1,D_2,\dots,D_m\}$ is a nontrivial $(v,m,k,\la)$-SEDF in a group $G$, and write $D=\bigcup_{i=1}^m D_i$. Then (\ref{eqn-def}) is equivalent to
\[
D_j(D^{(-1)}-D_j^{(-1)})=\la(G-1) \quad \mbox{in $\Z[G]$ for each $j$ satisfying $1 \le j \le m$}.
\]
Apply a nonprincipal character $\chi \in \wh{G}$ to obtain
\begin{equation}\label{eqn-chardef}
\chi(D_j)\big(\ol{\chi(D)-\chi(D_j)}\big)=-\la \quad \mbox{for all nonprincipal $\chi \in \wh{G}$ and for each $j$}.
\end{equation}
Some basic restrictions were derived from (\ref{eqn-chardef}) in \cite[Lemma 3.1]{MS}. We now extend that analysis.

It follows from \eqref{eqn-chardef} that for each $j$ satisfying $1 \le j \le m$,
\begin{equation}\label{eqn-chiDlaiff}
|\chi(D_j)|^2=\la \quad \mbox{if and only if} \quad \chi(D) = 0.
\end{equation}
Define
\begin{align}
\wh{G}^0 &=\{\mbox{nonprincipal } \chi \in \wh{G} \mid \chi(D)=0\}, \label{eqn-G0defn} \\
\wh{G}^N &= \{\mbox{nonprincipal }\chi \in \wh{G} \mid \chi(D) \ne 0\}, \nonumber 
\end{align}
so that $\wh{G}$ may be partitioned (with respect to $D$) as the disjoint union $\{\chi_0\} \cup \wh{G}^0 \cup \wh{G}^N$.
We now show that the set $\wh{G}^N$ is non-empty.

\begin{lemma}[{\cite[Lemma 3.1 (d)]{MS}}]\label{lem-GNnonempty}
Suppose $\{D_1,D_2,\dots,D_m\}$ is a nontrivial $(v,m,k,\la)$-SEDF in a group $G$, and let $D=\bigcup_{i=1}^m D_i$. Then $|\wh{G}^N| > 0$.
\end{lemma}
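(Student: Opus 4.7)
The plan is to argue by contradiction using the Fourier inversion formula (Proposition~\ref{prop-fourier}). Suppose $\wh{G}^N = \emptyset$, so that $\chi(D) = 0$ for every nonprincipal $\chi \in \wh{G}$. Write $D = \sum_{g \in G} c_g g$ in $\Z[G]$, where each $c_g \in \{0,1\}$ because $D$ is a subset of $G$. Applying Proposition~\ref{prop-fourier} to $D$, only the principal character contributes, so
$$
c_g = \frac{1}{|G|}\,\chi_0(D)\,\ol{\chi_0(g)} = \frac{|D|}{v} = \frac{km}{v} \quad \mbox{for every $g \in G$.}
$$

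The key observation is that this forces every coefficient of $D$ to take the same value $km/v$, which must therefore be either $0$ or $1$. The first case yields $k=0$, contradicting $k \ge 1$; the second case yields $v = km$, in which case the SEDF is trivial by (the remark preceding) Lemma~\ref{lem-trivial}. Either outcome contradicts our nontriviality hypothesis, so $\wh{G}^N$ must be nonempty.

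There is no real obstacle here; the only thing to be careful about is invoking the correct fact that a nontrivial SEDF satisfies $v > km$ (which follows from Lemma~\ref{lem-trivial}, or more directly from \cite[Lemma 1.2]{MS}). The entire argument is a one-line consequence of Fourier inversion once one notes that the hypothesis $\wh{G}^N = \emptyset$ forces the indicator function of $D$ to be constant on $G$.
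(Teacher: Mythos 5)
Your proof is correct and follows essentially the same argument as the paper: assume $\wh{G}^N=\emptyset$, apply Fourier inversion to get $c_g = km/v$ for all $g$, and derive a contradiction from the nontriviality of the SEDF (the paper invokes $v>km$ from Lemma~\ref{lem-trivial} to get $0<c_g<1$ directly, while you phrase it via the constant being forced to be $0$ or $1$ — the same idea).
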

\begin{proof}
Suppose, for a contradiction, that $\chi(D) = 0$ for each nonprincipal $\chi \in \wh{G}$. Write $D = \sum_{g\in G} c_g g$ in $\Z[G]$ and use Proposition~\ref{prop-fourier} to show that for each $g \in G$ we have
\[
c_g = \frac{1}{v}\chi_0(D)\ol{\chi_0(g)} = \frac{km}{v}.
\]
By Lemma~\ref{lem-trivial} we have $v > km$, giving the contradiction $0 < c_g < 1$.
\end{proof}

For each $\chi \in \wh{G}^N$, set $\al_{j,\chi}$ to be the real number $\frac{|\chi(D_j)|^2}{|\chi(D_j)|^2-\la}$. Then conjugate \eqref{eqn-chardef}, multiply both sides by $\chi(D_j)$, and rearrange to give
\begin{equation}\label{eqn-charval}
\chi(D_j)=\al_{j,\chi}\,\chi(D) \quad \mbox{for $\chi \in \wh{G}^N$}.
\end{equation}
Substitute for $\chi(D_j)$ from \eqref{eqn-charval} into \eqref{eqn-chardef} to obtain a quadratic equation in $\al_{j,\chi}$:
\begin{equation}\label{eqn-quadeqn}
\alj^2-\alj-\tfrac{\la}{|\chi(D)|^2}=0 \quad \mbox{for $\chi \in \wh{G}^N$}.
\end{equation}
The solutions of this equation are
\begin{equation}\label{eqn-sols}
\alp=\frac{1}{2}\Big(1+\sqrt{1+\tfrac{4\la}{|\chi(D)|^2}}\Big), \quad
\alm=\frac{1}{2}\Big(1-\sqrt{1+\tfrac{4\la}{|\chi(D)|^2}}\Big)
\quad \mbox{for $\chi \in \wh{G}^N$}.
\end{equation}
For each $\chi \in \wh{G}^N$, let $\lpc$ and $\lmc$ be the number of times $\alj$ takes the value $\alp$ and $\alm$, respectively, as $j$ ranges over $1 \le j \le m$. Using $\chi(D)=\sum_{j=1}^m\chi(D_j)$, we find from \eqref{eqn-charval} that
$$
\lpc \alp + \lmc \alm =1.
$$
Combine with the counting condition $\lpc+\lmc=m$ to determine $\lpc$ and $\lmc$ as
\begin{equation}\label{eqn-solsfre}
\lpc=\frac{m}{2}-\frac{m-2}{2\sqrt{1+\frac{4\la}{|\chi(D)|^2}}}, \quad
\lmc=\frac{m}{2}+\frac{m-2}{2\sqrt{1+\frac{4\la}{|\chi(D)|^2}}} \quad
\quad \mbox{for $\chi \in \wh{G}^N$}.
\end{equation}
In particular, $\lpc \ge \frac{m}{2}-\frac{m-2}{2} = 1$ and $\lmc \ge 1$, so the values $\alp$ and $\alm$ both occur as $j$ ranges over $\{1,2,\dots,m\}$. Therefore from \eqref{eqn-charval} we have
 \begin{equation}\label{eqn-chiDj}
\{\chi(D_j) \mid 1 \le j \le m\} =
\{\alp\,\chi(D), \alm\,\chi(D)\} \quad \mbox{for $\chi \in \wh{G}^N$}.
\end{equation}

The expressions \eqref{eqn-solsfre} illustrate a fundamental difference between the cases $m=2$ and $m>2$. When $m=2$, these expressions reduce to $\lpc = \lmc = 1$. But when $m>2$, we require $\sqrt{1+\frac{4\la}{|\chi(D)|^2}} \in \Q$ for each $\chi \in \wh{G}^N$ in order for $\lpc$ and $\lmc$ to be integers. We shall see in Section~\ref{sec-m>2} that this yields strong restrictions on the character values of $\chi(D)$ and $\chi(D_j)$ for SEDFs when $m>2$, which do not apply when $m=2$.

We conclude this section with a result required in Section~\ref{sec-near-complete}.

\begin{lemma}\label{lem-subgp}
Suppose $\{D_1,D_2,\ldots,D_m\}$ is a nontrivial $(v,m,k,\la)$-SEDF in a group~$G$, where $1 \notin \bigcup_{i=1}^m D_i$. Then, for each $j$,
neither $D_j \cup \{1\}$ nor $G \sm D_j$ is a subgroup of $G$.
\end{lemma}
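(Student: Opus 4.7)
The plan is to derive a contradiction in each case by applying a suitable nonprincipal character $\chi \in \wh{G}$ that is trivial on the hypothesized subgroup, and then pinning down $\chi(D)$ via equation~\eqref{eqn-chardef}. The key point is that such a $\chi$ collapses $\chi(D_j)$ to a real integer, which forces $\chi(D) \in \Q$; combined with the algebraic integrality of $\chi(D)$, this produces a divisibility condition that can be ruled out using Proposition~\ref{prop-nonexistence}(6).

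For the first case, suppose $H = D_j \cup \{1\}$ is a subgroup of $G$. Since the SEDF is nontrivial, $v > km \ge 2k \ge k+1 = |H|$, so $H$ is a proper subgroup of $G$ and admits a nonprincipal character $\chi$ trivial on $H$. Then $\chi(D_j) = k$, and substitution into \eqref{eqn-chardef} yields $\chi(D) = k - \la/k$. This value lies in $\Q$ and is an algebraic integer, hence $k \mid \la$. But Proposition~\ref{prop-nonexistence}(6) forces $\la < k$, and $\la \ge 1$, a contradiction.

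For the second case, suppose $K = G \sm D_j$ is a subgroup of $G$. Since $D_j$ is nonempty, $K$ is a proper subgroup, so a nonprincipal character $\chi$ trivial on $K$ exists. Using $\chi(G) = 0$, one gets $\chi(D_j) = -\chi(K) = k-v$. Substitution into \eqref{eqn-chardef} yields $\chi(D) = (k-v) + \la/(v-k) \in \Z$, so $(v-k) \mid \la$. But $v > 2k$ and $\la < k$ together give $v-k > k > \la \ge 1$, so again a contradiction.

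The argument is essentially mechanical once the appropriate characters have been selected; the only real insight is that making $\chi$ trivial on the candidate subgroup reduces $\chi(D_j)$ to a rational integer, triggering the algebraic-integrality argument. I do not anticipate any serious obstacle.
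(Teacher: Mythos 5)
Your proof is correct. The first case is essentially identical to the paper's argument: a nonprincipal character trivial on $D_j\cup\{1\}$ forces $\chi(D)=k-\la/k$ to be a rational algebraic integer, hence $k\mid\la$, contradicting $\la<k$ from Proposition~\ref{prop-nonexistence}~(6). Where you diverge is the second case. The paper disposes of $G\sm D_j$ by pure counting: if $G\sm D_j$ were a subgroup then $(v-k)\mid v$ would force $k\ge v/2$, while $v>km\ge 2k$ gives $k<v/2$ --- no characters and no second appeal to Proposition~\ref{prop-nonexistence}~(6) are needed. You instead rerun the character argument with $\chi$ trivial on $G\sm D_j$, getting $\chi(D_j)=k-v$ and hence $(v-k)\mid\la$, which is impossible since $v-k>k>\la\ge 1$. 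Both routes are valid and short; the paper's is slightly more elementary for that half (Lagrange's theorem only), whereas yours has the mild aesthetic advantage of treating the two cases uniformly by the same character-and-integrality mechanism, at the cost of invoking $\la<k$ and $v>2k$ a second time. One presentational note: in your second case you assert $\chi(D)=(k-v)+\la/(v-k)\in\Z$; strictly this follows from the rational-plus-algebraic-integer argument you spelled out in case one, so it is worth saying so explicitly there as well.
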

\begin{proof}

Suppose, for a contradiction, that $D_j \cup \{1\}$ is a subgroup of $G$. Since $m \ge 2$, there exists a nonprincipal character $\chi$ of $G$ which is principal on $D_j \cup \{1\}$. Then (\ref{eqn-chardef}) gives $k(\ol{\chi(D)}-k)=-\la$, so that $\chi(D)=k-\frac{\la}{k}$ is a rational number. Since $\chi(D)$ is also an algebraic integer, $\la/k$ is an integer and therefore $\la \ge k$. This contradicts Proposition~\ref{prop-nonexistence}~(6).

Suppose, for a contradiction, that $G \sm D_j$ is a subgroup of $G$. Then $(v-k) \mid v$, and since $k > 1$ we have $k \ge \frac{v}{2}$. But
$v > km$ and $m \ge 2$ gives the contradiction $k < \frac{v}{2}$.
\end{proof}

\section{Near-complete SEDFs}
\label{sec-near-complete}
Let $D$ be a $k$-subset of a group $G$ of order~$v$, where $1 \notin D$.
The subset $D$ is a {\it $(v,k,\la,\mu)$ partial difference set} (PDS) in $G$ if
\begin{equation}\label{defn-PDS}
DD^{(-1)}=(k-\mu)\cdot 1+\la D+\mu(G-D) \quad \mbox{in $\Z[G]$}.
\end{equation}
(A slightly different definition, which we will not require, applies when $1 \in D$.)
The PDS $D$ is {\it regular} if $D = D^{(-1)}$, and is {\it trivial} if either $D \cup \{1\}$ or $G \sm D$ is a subgroup of~$G$.
In this section we prove the following result, which characterizes the parameters of a nontrivial near-complete $(v, m, k, \la)$-SEDF and provides the first known example of a nontrivial $(v, m, k, \la)$-SEDF having $m > 2$.

\begin{theorem}\label{thm-characterization}
Let $D_1, D_2, \dots, D_m$ partition the nonidentity elements of an abelian group $G$ of order $v = km+1$ into $m$ subsets each of size~$k > 1$. Then $\{D_1, D_2, \dots, D_m\}$ is a nontrivial near-complete $(v,m,k,\la)$-SEDF in $G$ if and only if either
\begin{enumerate}[(1)]
\item
$(v,m,k,\la)=(v,2,\frac{v-1}{2},\frac{v-1}{4})$
and $v \equiv 1 \pmod{4}$
and $D_1$ is a nontrivial regular $(v,\frac{v-1}{2},\frac{v-5}{4},\frac{v-1}{4})$-PDS in~$G$,
or
\item
$(v,m,k,\la)=(243,11,22,20)$
and each $D_j$ is a nontrivial regular $(243, 22, 1, 2)$-PDS in~$G$ for $1 \le j \le 11$.
\end{enumerate}
Furthermore, a $(243, 11, 22, 20)$-SEDF exists in~$\Z_3^5$.

\end{theorem}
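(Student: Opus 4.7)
The plan is threefold: establish an equivalence between near-complete SEDFs and partitions of $G \setminus \{1\}$ into specific regular PDSs; narrow the admissible parameters using counting and the character theory of Section~\ref{sec-char}; and exhibit the $M_{11}$-based construction.

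First I would exploit near-completeness by substituting $\sum_{i \ne j} D_i^{(-1)} = (G - 1) - D_j^{(-1)}$ into \eqref{eqn-def} to derive
\[
D_j D_j^{(-1)} \,=\, \lambda \cdot 1 \,+\, (k - \lambda)\, G \,-\, D_j \quad \text{in } \Z[G].
\]
Since $D_j D_j^{(-1)}$ is invariant under $(-1)$ while the right side is invariant only when $D_j = D_j^{(-1)}$, this forces $D_j$ to be symmetric; matching with the PDS definition \eqref{defn-PDS} identifies each $D_j$ as a regular $(v, k, k - \lambda - 1, k - \lambda)$-PDS, nontrivial by Lemma~\ref{lem-subgp}. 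The converse direction is immediate by reversing the computation.

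Next, \eqref{eqn-counting} combined with $v - 1 = km$ gives $k(m-1) = \lambda m$, forcing $m \mid k$; writing $k = m\mu$ yields $\lambda = (m-1)\mu$, $v = m^2\mu + 1$, and PDS parameters $(v, m\mu, \mu - 1, \mu)$. For $m = 2$ this is exactly case~(1). For $m > 2$, Section~\ref{sec-char} applies with $\chi(D) = \chi(G - 1) = -1$ for every nonprincipal $\chi$, and the integrality of $\lpc, \lmc$ in \eqref{eqn-solsfre} forces $\sqrt{1 + 4\lambda}$ to be a positive odd integer $s$ with $s \mid m - 2$ and $4(m-1)\mu = s^2 - 1$. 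To pin down $(v, m, k, \lambda) = (243, 11, 22, 20)$, observe that $\gcd(km, p) = \gcd(v-1, p) = 1$ for every prime $p \mid v$, so Proposition~\ref{prop-nonexistence}(5) forces $\mathrm{rad}(v) \mid m - 2$; combining this with $v = m^2\mu + 1$, the integrality constraints on $s$, the other parts of Proposition~\ref{prop-nonexistence}, and (where needed) specialised PDS nonexistence results together with the exponent bound of Section~\ref{sec-exp-bound}, a finite case analysis on $s$ eliminates every candidate other than $(243, 11, 22, 20)$.

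For the construction in $\Z_3^5$, identify the 242 nonzero vectors of $\F_3^5$ with antipodal pairs of the 121 points of $\PG(4, 3)$, and embed $M_{11}$ into $\PGL_5(3)$ through its classical 5-dimensional ternary representation. The known orbit structure of $M_{11}$ on $\PG(4, 3)$ produces 11 disjoint 11-point subsets whose $\Z_3^5$-preimages form 11 symmetric 22-subsets partitioning $\Z_3^5 \setminus \{0\}$; a direct check (by computer, or via the ternary Golay code structure preserved by $M_{11}$) verifies that each is a regular $(243, 22, 1, 2)$-PDS, and the equivalence above assembles them into the required SEDF. The main obstacle will be the parameter reduction for $m > 2$: even after Proposition~\ref{prop-nonexistence}(5) forces $\mathrm{rad}(v) \mid m - 2$, sporadic candidates such as $(9801, 35, 280, 272)$ survive every elementary divisibility and character test, so eliminating them relies on the full force of specialised PDS nonexistence criteria and the exponent-bound techniques developed in Section~\ref{sec-exp-bound}.
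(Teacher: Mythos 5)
Your reduction to partial difference sets is sound and matches the paper's Lemma~\ref{lem-PDS}: the computation $D_jD_j^{(-1)}=\la\cdot 1+(k-\la)G-D_j$ is exactly \eqref{eqn-rearranges}, your symmetry argument gives regularity directly (the paper cites \cite[Proposition 1.2]{Ma94} instead), and nontriviality comes from Lemma~\ref{lem-subgp} in both treatments. But the decisive step of the theorem is the classification of the admissible parameters, and there your proposal has a genuine gap. The paper settles this in one stroke by observing that each $D_j$ is a nontrivial regular PDS with parameters of the special form $(v,k,\mu-1,\mu)$ and invoking the Arasu--Jungnickel--Ma--Pott classification (Theorem~\ref{thm-PDS}): in an abelian group the only such parameter sets are the Paley ones and $(243,22,1,2)$, $(243,220,199,220)$, which immediately yields cases (1) and (2). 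You never use (or cite) this classification; instead you propose an ad hoc elimination via $k=m\mu$, $s=\sqrt{1+4\la}\mid m-2$, $\mathrm{rad}(v)\mid m-2$ from Proposition~\ref{prop-nonexistence}(5), plus the character and exponent machinery. That candidate space is not visibly finite (for instance the whole family $s=m-2$, $m=4\mu+3$, $v=\mu(4\mu+3)^2+1$ satisfies your divisibility conditions), and you yourself concede that parameter sets such as $(9801,35,280,272)$ survive every test you list; the paper's Sections~\ref{sec-exp-bound} and~\ref{sec-m>2} do not eliminate it either (it is not a prime power, $|\wh{G}^0|=0$ and all congruences of Theorem~\ref{thm-char-scvp} hold). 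Such candidates are killed precisely by the PDS classification, so without quoting Theorem~\ref{thm-PDS} (or reproving it, which is a substantial piece of work) your ``only if'' direction for $m>2$ is incomplete. A smaller omission: in case (1) the hypothesis concerns only $D_1$, so the ``if'' direction needs the observation that $D_2=G\sm(D_1\cup\{1\})$ is again a Paley-type PDS (\cite[Lemma 4.3]{HP}), which your symmetric equivalence does not supply by itself.

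The construction part is also misdescribed. The group $M_{11}$ has exactly two point-orbits on $\PG(4,3)$, of sizes $11$ and $110$, so ``the known orbit structure of $M_{11}$'' does not produce eleven disjoint $11$-point subsets. In the paper the extra ten subsets arise from the cyclic subgroup $\lan W\ran$ of order $11$ together with a generator $S$ of its order-$121$ centralizer in $\PGL(5,3)$ satisfying $W=S^{11}$: the sets $O_j=O_1S^{j-1}$ are shown to partition the $121$ points, and each $B_j=B_1S^{j-1}$ is a $(243,22,1,2)$-PDS because $S$ is an invertible linear map, so differences are permuted. Your fallback ``direct check by computer'' could in principle certify a partition once it is specified, but as written you have not specified how the eleven subsets are obtained, and the claim that they come from $M_{11}$'s orbits is false. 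To repair the proposal, insert the citation of Theorem~\ref{thm-PDS} at the parameter-classification step and replace the orbit claim with the $\lan W\ran$-plus-centralizer argument (or an explicit verified partition).
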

The restriction of Theorem~\ref{thm-characterization} to the case $m=2$ is due to Huczynska and Paterson \cite[Theorem~4.6]{HP}, and also to Ding \cite[Proposition 2.1]{D} from the viewpoint of difference families. One direction of the case $m=2$, namely the construction of an SEDF from a PDS, was also proved in \cite[Section 3]{DHM}.
Necessary and sufficient conditions for the existence of a PDS with the parameters specified in (1) and (2) of Theorem~\ref{thm-characterization} are not known. However, sufficient conditions for the existence of a PDS with the parameters specified in (1) of Theorem~\ref{thm-characterization} (known as a Paley-type PDS) are known to include:
$G$ is elementary abelian and $v$ is a prime power congruent to 1 modulo 4~\cite{Paley};
$G = \Z_{p^r}^2$ for an odd prime $p$~\cite{LM};
and $G = \Z_3^2 \times \Z_{p}^{4r}$ for an odd prime $p$~\cite{Polhill}.
Necessary conditions for the existence of a PDS in an abelian group $G$ with the parameters specified in (2) of Theorem~\ref{thm-characterization} are that $G = \Z_3^5$, $\Z_3^3 \times \Z_9$, or $\Z_3 \times \Z_9^2$ \cite[Theorem 6.9]{Ma94}; existence is known for $G = \Z_3^5$ \cite{BLS}, \cite[Section 10]{CK}.

In order to establish Theorem~\ref{thm-characterization}, we make the following connection between a nontrivial near-complete SEDF and a collection of nontrivial regular PDSs.

\begin{lemma}\label{lem-PDS}
Let $D_1,D_2,\ldots,D_m$ partition the nonidentity elements of an abelian group~$G$ of order $v = km+1$ into $m$ subsets each of size~$k > 1$. Then $\{D_1,D_2,\ldots,D_m\}$ is a nontrivial near-complete $(v,m,k,\la)$-SEDF in $G$ if and only if each $D_j$ is a nontrivial regular $(v,k,k-\la-1,k-\la)$-PDS in $G$ for $1 \le j \le m$.
\end{lemma}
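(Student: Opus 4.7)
The plan is to translate the SEDF condition \eqref{eqn-def} into a group-ring identity purely in $D_j$ and $D_j^{(-1)}$, then recognize this as the defining relation \eqref{defn-PDS} of a PDS. Since the $D_i$ partition $G \setminus \{1\}$, we have $D = \bigcup_{i=1}^m D_i = G - 1$ in $\Z[G]$, and so $\sum_{i \ne j} D_i^{(-1)} = (G - 1 - D_j)^{(-1)} = G - 1 - D_j^{(-1)}$. Thus \eqref{eqn-def} reduces for each $j$ to $D_j\bigl(G - 1 - D_j^{(-1)}\bigr) = \la(G - 1)$. Expanding via $D_j G = k G$ and rearranging yields
\begin{equation*}
D_j D_j^{(-1)} = \la \cdot 1 + (-1) D_j + (k - \la) G,
\end{equation*}
which, comparing with \eqref{defn-PDS}, is exactly the condition that $D_j$ be a $(v, k, k-\la-1, k-\la)$-PDS (note $1 \notin D_j$ is automatic).

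Next I would argue that regularity comes for free in this setting. Applying the antiautomorphism $X \mapsto X^{(-1)}$ of $\Z[G]$ to the displayed identity gives $D_j^{(-1)} D_j = \la + (-1) D_j^{(-1)} + (k - \la) G$. Since $G$ is abelian, $D_j D_j^{(-1)} = D_j^{(-1)} D_j$, and subtracting forces $D_j = D_j^{(-1)}$. So each $D_j$ is automatically a regular PDS, and no extra hypothesis is needed.

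For the converse, I would simply reverse the algebra: given that each $D_j$ is a regular $(v, k, k-\la-1, k-\la)$-PDS, the PDS identity together with $D_j = D_j^{(-1)}$ yields $D_j(G - 1 - D_j^{(-1)}) = \la(G - 1)$, which (using $D = G-1$) is the SEDF condition \eqref{eqn-def}. Finally, the nontriviality correspondence is handled by Lemma~\ref{lem-subgp}: a nontrivial near-complete SEDF forces each $D_j \cup \{1\}$ and each $G \setminus D_j$ to be a non-subgroup, hence each associated PDS is nontrivial; conversely, nontriviality of any one PDS gives $k > 1$, making the SEDF nontrivial.

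The whole argument is elementary group-ring bookkeeping; the only mildly delicate point is keeping track of inverses in the sum $\sum_{i \ne j} D_i^{(-1)}$ and observing that regularity of the $D_j$ is a consequence, not an extra hypothesis. No character theory is needed here, although the result will be used in combination with the character-theoretic machinery of Section~\ref{sec-char} to prove Theorem~\ref{thm-characterization}.
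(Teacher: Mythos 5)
Your proof is correct and follows essentially the same route as the paper: rewrite $\sum_{i \ne j} D_i^{(-1)}$ as $G-1-D_j^{(-1)}$ using the partition hypothesis, expand $D_jG = kG$, and observe that the resulting group-ring identity is exactly the PDS relation \eqref{defn-PDS} with parameters $(v,k,k-\la-1,k-\la)$; nontriviality of each PDS is then exactly Lemma~\ref{lem-subgp}, as you say. The one place you genuinely diverge is regularity: the paper simply cites \cite[Proposition 1.2]{Ma94}, whereas you derive $D_j = D_j^{(-1)}$ directly by applying the inversion map to the identity $D_jD_j^{(-1)} = \la\cdot 1 - D_j + (k-\la)G$, noting that the left side and the terms $\la\cdot 1$ and $(k-\la)G$ are invariant while $D_j$ becomes $D_j^{(-1)}$. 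This is a valid, self-contained replacement for the citation (it is in effect the proof of Ma's proposition in the special case $\la_{\mathrm{PDS}} \ne \mu$, which holds here since the two parameters differ by $1$), and it makes the lemma independent of the external reference; the paper's choice just keeps the proof shorter. One small remark: in the converse direction regularity is not actually needed to recover \eqref{eqn-def} -- the PDS identity alone rearranges back to the SEDF condition -- so your invocation of $D_j = D_j^{(-1)}$ there is harmless but superfluous.
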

\begin{proof}
Since $D_1,D_2,\ldots,D_m$ is a partition of the nonidentity elements of $G$,
for each $j$ satisfying $1 \le j \le m$ we have $1 \notin D_j$ and
$$
\sum_{\substack{1 \le i \le m \\ i \ne j}} D_i=G-D_j-1,
$$
and therefore
\[
D_j \sum_{\substack{1 \le i \le m \\ i \ne j}}D_i^{(-1)}=D_j(G-D_j^{(-1)}-1)
= k G - D_jD_j^{(-1)} - D_j.
\]
It follows that $\{D_1,D_2,\ldots,D_m\}$ is a nontrivial near-complete $(v,m,k,\la)$-SEDF in $G$ if and only if, for each $j$,
$$
\la(G-1) = k G - D_jD_j^{(-1)} - D_j,
$$
which rearranges to
\begin{equation}\label{eqn-rearranges}
D_jD_j^{(-1)}=\la\cdot 1+(k-\la-1)D_j+(k-\la)(G-D_j).
\end{equation}
Equivalently, each $D_j$ is a $(v, k, k-\la-1, k-\la)$-PDS in $G$.

To complete the proof, we require that if $\{D_1,D_2,\ldots,D_m\}$ is a nontrivial near-complete $(v,m,k,\la)$-SEDF in $G$, then each PDS $D_j$ is nontrivial and regular.  Nontriviality of each $D_j$ is given by Lemma~\ref{lem-subgp}, and regularity by \cite[Proposition 1.2]{Ma94}.
\end{proof}

The parameters of the nontrivial regular PDSs specified in Lemma~\ref{lem-PDS} take the form $(v, k, \mu-1, \mu)$. The following result characterizes all such parameters when the group is abelian.

\begin{theorem}[{\cite{AJMP}; see also \cite[Theorem 13.1]{Ma94}}]
\label{thm-PDS}
Suppose there exists a nontrivial regular $(v,k,\mu-1,\mu)$-PDS in an abelian group. Then either
\begin{enumerate}[(1)]
\item
$(v,k,\mu-1,\mu)=(v,\frac{v-1}{2},\frac{v-5}{4},\frac{v-1}{4})$ and $v \equiv 1 \pmod{4}$, or
\item
$(v,k,\mu-1,\mu)=(243,22,1,2)$ or $(243,220,199,220)$.
\end{enumerate}
\end{theorem}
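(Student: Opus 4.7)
The plan is to combine a character-theoretic reduction with a case split on whether a natural discriminant is a perfect square; the non-square case will yield case~(1) via a Galois argument, while the square case is where the serious work lies.

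\textbf{Character quadratic.} I would first apply a nonprincipal character $\chi \in \wh G$ to the defining equation $DD^{(-1)} = (k-\mu)\cdot 1 + (\mu-1)D + \mu(G-D)$. Since $\chi(G)=0$ and since regularity $D = D^{(-1)}$ forces $\chi(D)$ to be a real algebraic integer, this collapses to
$$\chi(D)^2 + \chi(D) - (k-\mu) = 0,$$
so $\chi(D) \in \{r_+, r_-\}$ with $r_\pm = \tfrac{1}{2}\bigl(-1 \pm \sqrt{\Delta}\bigr)$ and $\Delta := 1 + 4(k-\mu)$. Writing $f_\pm$ for the number of nonprincipal characters with $\chi(D) = r_\pm$, the identity $f_+ + f_- = v-1$ together with $f_+ r_+ + f_- r_- = -k$ (from Fourier inversion at the identity, Proposition~\ref{prop-fourier}) determines $f_\pm$ in terms of $v,k,\Delta$, and the standard PDS counting relation specialised to $\la = \mu-1$ simplifies to $k^2 = \mu(v-1)$.

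\textbf{Case~A ($\Delta$ not a square).} Then $r_\pm \notin \Q$. Every $\chi(D)$ lies in some cyclotomic field $\Q(\zeta_n)$, and the nontrivial Galois automorphism of $\Q(\sqrt{\Delta})$ extends to an automorphism $\sig$ of $\Q(\zeta_n)$ of the form $\zeta_n \mapsto \zeta_n^a$ with $\gcd(a,n)=1$. Then $\sig(\chi(D)) = \chi^a(D)$, so $\chi \mapsto \chi^a$ is a bijection on $\wh G$ interchanging the fibres above $r_+$ and $r_-$. This forces $f_+ = f_- = (v-1)/2$; substituting into $f_+ r_+ + f_- r_- = -k$ gives $2k = v-1$, whence $\mu = (v-1)/4$, and integrality of $\mu$ requires $v \equiv 1 \pmod 4$. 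This is case~(1).

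\textbf{Case~B ($\Delta = t^2$ with $t$ odd).} Now $r_\pm = (-1 \pm t)/2$ are integers and the two linear conditions on $f_\pm$ no longer bound $v$. My plan is to push the group-ring element $D$ through cyclotomic field descent: for each prime $p$ dividing $v$, control the $p$-adic valuation of $\chi(D)$ via Ma's lemma on primes self-conjugate modulo the exponent of $G$, combined with an exponent-bound analysis in the spirit of Section~\ref{sec-exp-bound}. This should bound the prime divisors and exponents of $G$ sharply enough that only a finite list of tuples survives the joint constraints $k^2 = \mu(v-1)$, $\Delta = t^2$, and integrality of $f_\pm$; that finite search should leave $v = 243$ with parameters $(22,1,2)$ and its complement $(220,199,220)$ (obtained by replacing $D$ with $G \sm (D \cup \{1\})$). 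Nontriviality is automatic in each since $k+1 = 23$ and $v-k = 23$ do not divide $243$. The main obstacle is exactly this step: the character quadratic and the Galois argument of Case~A are light, but forcing $v=243$ in Case~B genuinely requires the cyclotomic field descent, self-conjugacy machinery, and the eventual finite case analysis. This is precisely what the cited Arasu--Jungnickel--Ma--Pott classification carries out, and I expect any proof from scratch would have to reproduce that delicate number-theoretic work.
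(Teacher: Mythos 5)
The paper does not prove this theorem: it is quoted from Arasu, Jungnickel, Ma and Pott \cite{AJMP} (see also \cite[Theorem 13.1]{Ma94}), so there is no internal proof to compare against. Judged as a self-contained argument, your proposal establishes only half of the statement. Your Case~A is correct and standard: applying a nonprincipal character to \eqref{defn-PDS} with $\la=\mu-1$ gives $\chi(D)^2+\chi(D)-(k-\mu)=0$, the relation $f_+r_++f_-r_-=-k$ follows from Fourier inversion at the identity because $1\notin D$, and when $\De=1+4(k-\mu)$ is not a square the Galois symmetry forces $f_+=f_-=(v-1)/2$, hence $k=(v-1)/2$ and, via $k^2=\mu(v-1)$, the Paley parameters of case~(1) with $v\equiv 1\pmod 4$.

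The genuine gap is Case~B, which is the entire content of conclusion~(2). Asserting that an exponent-bound analysis ``in the spirit of Section~\ref{sec-exp-bound}'' together with self-conjugacy ``should'' whittle the square case down to $v=243$ is a plan, not a proof: when $\De=t^2$ the values $r_\pm=(-1\pm t)/2$ are rational integers, no Galois symmetry constrains the multiplicities, and the counting and integrality conditions alone do not reduce the feasible tuples $(v,k,\mu)$ to a finite list. Eliminating everything except $v=243$ is precisely the hard theorem of \cite{AJMP}, whose proof rests on sub-difference-set and multiplier-type arguments that your outline neither sketches nor replaces; since that is the very reference being cited, the proposal defers its essential step to the result it is meant to prove. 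Two smaller points: first, the techniques of Section~\ref{sec-exp-bound} (which bound $\exp(G_p)$ using a prime $q$ dividing $\la$ that is a primitive root modulo $p^d$) are not obviously applicable here, since in Case~B there need be no such prime to exploit; second, the complement of a $(243,22,1,2)$-PDS is a $(243,220,199,200)$-PDS (consistent with $k^2=\mu(v-1)$, as $220^2=200\cdot 242$), so the final entry $220$ in the stated parameters --- which your proposal copies --- appears to be a typographical error for $200$, and worth flagging rather than reproducing.
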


We can now give the structure of the proof of Theorem~\ref{thm-characterization}.
\begin{proof}[Proof of Theorem~\ref{thm-characterization}]
By Lemma~\ref{lem-PDS},
$\{D_1, D_2, \dots, D_m\}$ is a nontrivial near-complete $(v,m,k,\la)$-SEDF in $G$ if and only if
each $D_j$ is a nontrivial regular $(v, k, k-\la-1, k-\la)$-PDS in $G$ for $1 \le j \le m$.
Since $m = (v-1)/k$, by Theorem~\ref{thm-PDS} this holds if and only if either
\begin{enumerate}[(1)]
\item $(v,m,k,\la)=(v,2,\frac{v-1}{2},\frac{v-1}{4})$ and $v \equiv 1 \pmod{4}$ and each of $D_1, D_2$ is a nontrivial regular $(v, \frac{v-1}{2}, \frac{v-5}{4}, \frac{v-1}{4})$-PDS in $G$, or
\item $(v,m,k,\la)=(243,11,22,20)$ and each $D_j$ is a nontrivial regular $(243, 22, 1, 2)$-PDS in $G$ for $1 \le j \le 11$.
\end{enumerate}
For case (1), the desired result follows from the observation that if $D_1$ is a regular $(v, \frac{v-1}{2}, \frac{v-5}{4}, \frac{v-1}{4})$-PDS in $G$ that does not contain the identity, then so is $D_2 = G\setminus (D_1 \cup \{1\})$ \cite[Lemma~4.3]{HP}.

It remains to construct a $(243,11,22,20)$-SEDF in $\Z_3^5$, which is carried out below.
\end{proof}

In the rest of this section we shall construct a $(243,11,22,20)$-SEDF in $\Z_3^5$, regarded as the additive group of~$\Ftf$. By Lemma~\ref{lem-PDS}, this is equivalent to partitioning the nonzero elements of $\Ftf$ into 11 subsets, each of which is a nontrivial regular $(243,22,1,2)$-PDS in the additive group of~$\Ftf$.

We firstly review the construction of a single nontrivial regular $(243,22,1,2)$-PDS in the additive group of $\Ftf$. This PDS was originally constructed from the perfect ternary Golay code \cite{BLS}; we shall use the following alternative description involving a group of collineations of projective space having exactly two point-orbits \cite[Section 10]{CK}.
The $\frac{3^5-1}{3-1}=121$ points of the projective space $\PG(4,3)$ are the $1$-dimensional subspaces of the vector space $\Ftf$ over $\Ft$. Each such point has the form $\lan x \ran$ for some nonzero $x \in \Ftf$, and corresponds to the vectors $x$ and $2x$ of~$\Ftf$.
The general linear group $\GL(5,3)$ is the group of $5 \times 5$ invertible matrices over~$\Ft$, and its center is $Z=\{I, 2I\}$ where $I$ is the $5 \times 5$ identity matrix.
The projective linear group $\PGL(5,3)$ is the quotient group $\GL(5,3)/Z$.
The action of an element $A \in \PGL(5,3)$ on a point $\lan x \ran \in \PG(4,3)$ is given by
$$
A: \lan x \ran \mapsto \lan xA \ran,
$$
where $xA$ is the usual vector-matrix product, and this action is transitive on the points of $\PG(4,3)$ \cite[p. 57]{DM}.
Now $\PGL(5,3)$ contains a subgroup of order $7920$ which is a representation of the Mathieu group $M_{11}$. The group $M_{11}$ has exactly two point-orbits on $\PG(4,3)$: one of size $11$ and the other of size $110$ \cite[Example RT6]{CK}.
The $22$ vectors of $\Ftf$ corresponding to the point-orbit of size 11 form a nontrivial regular $(243,22,1,2)$-PDS in the additive group of $\Ftf$ \cite[Theorem 3.2 and Figure 2b]{CK}.

Define the elements of $\PGL(5,3)$:
\[
X=\begin{bmatrix}  0 &2 &1 &0 &0\\  2 &1 &1 &2 &2\\  0 &1 &1 &2 &2\\
  1 &0 &2 &2 &1\\  1 &2 &2 &2 &0\end{bmatrix} \quad \mbox{and} \quad
Y=\begin{bmatrix}  0 &0 &2 &0 &2\\  1 &1 &2 &2 &0\\
  2 &2 &2 &2 &2\\
  1 &2 &1 &1 &0\\  2 &2 &0 &2 &1
\end{bmatrix},
\]
which satisfy $X^2 = Y^4 = (XY)^{11} = I$.
The group $M_{11}$ may be represented explicitly \cite{ATLAS} as
\[
M_{11} = \lan X,Y \ran.
\]
The software package \emph{Magma} gives the
point-orbit of size $11$ under the action of $M_{11}$ on $\PG(4,3)$ as
\begin{align}
O_1 = \{
 &\lan(1,0,0,0,0)\ran,\lan(1,1,0,0,2)\ran,\lan(2,2,1,0,1)\ran,\lan(1,0,2,1,0)\ran,\lan(0,0,2,1,2)\ran,\lan(0,1,2,0,0)\ran, 	\nonumber \\
 &\lan(0,0,1,0,1)\ran,\lan(2,0,0,2,1)\ran,\lan(2,2,1,2,0)\ran,\lan(0,1,0,1,2)\ran,\lan(0,2,0,2,0)\ran\},			\label{eqn-O1explicit}
\end{align}
and the corresponding nontrivial regular $(243,22,1,2)$-PDS is
\begin{align}
B_1 =\{ & x \mid \lan x \ran \in O_1\} \cup \{2x \mid \lan x \ran \in O_1 \} \nonumber \\
    =\{
  &(1,0,0,0,0),(1,1,0,0,2),(2,2,1,0,1),(1,0,2,1,0),(0,0,2,1,2),(0,1,2,0,0),\nonumber \\
  &(0,0,1,0,1),(2,0,0,2,1),(2,2,1,2,0),(0,1,0,1,2),(0,2,0,2,0),(2,0,0,0,0),\nonumber \\
  &(2,2,0,0,1),(1,1,2,0,2),(2,0,1,2,0),(0,0,1,2,1),(0,2,1,0,0),(0,0,2,0,2),\nonumber \\
  &(1,0,0,1,2),(1,1,2,1,0),(0,2,0,2,1),(0,1,0,1,0)\} \label{eqn-B1defn}
\end{align}
in the additive group of $\Ftf$.

It is convenient to write
\[
W = XY =
\begin{bmatrix}
  1 &1 &0 &0 &2\\
  0 &2 &1 &1 &2\\
  0 &2 &0 &1 &1\\
  2 &1 &2 &2 &1\\
  2 &1 &0 &1 &0
\end{bmatrix},
\]
giving the alternative representation
\[
M_{11} = \lan W, Y \ran.
\]
Now the cyclic group $\lan W \ran$ is an order 11 subgroup of $M_{11}$. The orbit of $\lan (1,0,0,0,0) \ran$ under the action of $\lan W \ran$ has size 1 or 11; since $\lan W \ran$ does not fix the point $\lan (1,0,0,0,0) \ran$, this orbit is the whole of $O_1$:
\begin{equation}\label{eqn-O1orbit}
O_1= \{\lan(1,0,0,0,0) A\ran \mid A \in \lan W \ran \}.
\end{equation}
Recall that the group $M_{11}$ has exactly two point-orbits on $\PG(4,3)$: one of size $11$ (the set $O_1$), and the other of size $110$.
We will show that the action of the cyclic subgroup $\lan W \ran$ of $M_{11}$ on the points of $\PG(4,3)$ breaks the point-orbit of size 110 (under the action of $M_{11}$) into $10$ point-orbits of size $11$, each of which also corresponds to a nontrivial regular $(243,22,1,2)$-PDS in the additive group of $\Ftf$. This will give the partition of the nonzero elements of $\Ftf$ into 11 subsets required under Lemma~\ref{lem-PDS}.

The centralizer of $W$ in $\PGL(5,3)$ is the group $C(W) = \{B \in \PGL(5,3) : BW=WB\}$. \emph{Magma} gives $C(W)$ to be a cyclic group of order 121, one of whose generators is
$$
S=
\begin{bmatrix}
  1 &2 &2 &1 &2\\
  1 &2 &0 &1 &2\\
  0 &2 &2 &1 &2\\
  0 &0 &0 &0 &2\\
  1 &1 &1 &0 &1
\end{bmatrix},
$$
which satisfies $W = S^{11}$.
Define subsets $O_2, O_3, \dots, O_{11}$ of $\PG(4,3)$ by
\begin{equation}\label{eqn-OjxS}
O_j = \{ \lan xS^{j-1} \ran \mid \lan x \ran \in O_1\} \quad \mbox{for $2 \le j \le 11$}.
\end{equation}
Then for $1 \le j \le 11$ we find from \eqref{eqn-O1orbit} that
\begin{align*}
O_j &= \{ \lan (1,0,0,0,0)AS^{j-1} \ran \mid A \in \lan W \ran\}  \\
    &= \{ \lan (1,0,0,0,0)S^{j-1}A \ran \mid A \in \lan W \ran\}
\end{align*}
because $S \in C(W)$, and therefore the subset $O_j$ is the size $11$ orbit of the point $\lan (1,0,0,0,0)S^{j-1} \ran$ under the action of $\lan W \ran$.
Furthermore, using $W = S^{11}$ we may write
\begin{equation}\label{eqn-Ojkj}
O_j = \{ \lan (1,0,0,0,0)S^{11i+j-1} \ran \mid 0 \le i \le 10 \} \quad \mbox{for $1 \le j \le 11$},
\end{equation}
so that
\begin{equation}\label{eqn-unionOj}
\bigcup_{j=1}^{11} O_j=\{ \lan (1,0,0,0,0)S^{\ell} \ran \mid 0 \le \ell \le 120 \}.
\end{equation}
We claim that the subsets $O_1,O_2,\ldots,O_{11}$ form a partition of the 121 points of $\PG(4,3)$. Suppose, for a contradiction, that there is an integer $n$ satisfying $1 \le n \le 120$ such that
\begin{equation}\label{eqn-1Sm}
\lan (1,0,0,0,0)S^n \ran = \lan (1,0,0,0,0) \ran.
\end{equation}
Since $\lan S \ran = C(W)$ has order $121$, the matrix $S^n$ has order $11$ or $121$.
But $S^n$ cannnot have order 121, otherwise $S$ would fix the point $\lan (1,0,0,0,0)\ran$ and then from \eqref{eqn-unionOj} we would have $\bigcup_{j=1}^{11} O_j=\{\lan (1,0,0,0,0) \ran\}$, contradicting~\eqref{eqn-O1explicit}.
Therefore $S^n$ has order 11, so $S^n = S^{11i}$ for some $i$ satisfying $1 \le i \le 10$.
But from \eqref{eqn-Ojkj} the 11 points
$\{\lan(1,0,0,0,0)S^{11i}\ran \mid 0 \le i \le 10\}$ comprise the orbit $O_1$, and from \eqref{eqn-O1explicit} these 11 points are all distinct. This contradicts~\eqref{eqn-1Sm} and establishes the claim.

Finally, define subsets $B_2, B_3, \dots, B_{11}$ of the nonzero elements of $\Ftf$ by setting
\[
B_j = \{ x \mid \lan x \ran \in O_j\} \cup \{2x \mid \lan x \ran \in O_j \}
\quad \mbox{for $2 \le j \le 11$}.
\]
The subsets $B_1, B_2, \dots, B_{11}$ partition the 242 nonzero elements of $\Ftf$, and from \eqref{eqn-B1defn} and \eqref{eqn-OjxS} we have
\begin{equation}\label{eqn-Bjdefn}
B_j = \{ xS^{j-1} \mid x \in B_1\} \quad \mbox{for $1 \le j \le 11$}.
\end{equation}
Moreover, $B_1$ is a nontrivial regular $(243,22,1,2)$-PDS in the additive group of $\Ftf$, so from the definition \eqref{defn-PDS} the multiset $\{ x - y \mid x, y \in B_1\}$ contains the element 0 exactly $22$ times, each element of $B_1$ exactly once, and each other element of $\Ftf$ exactly twice.
Since $S$ is invertible, it follows from \eqref{eqn-Bjdefn} that each $B_j$ is also a nontrivial regular $(243,22,1,2)$-PDS in the additive group of~$\Ftf$ for $2 \le j \le 11$. By Lemma~\ref{lem-PDS}, $\{B_1,B_2,\ldots,B_{11}\}$ is therefore a $(243,11,22,20)$ near-complete SEDF in the additive group of~$\Ftf$.

Explicitly, we have
\begin{align*}
B_2=
\{&(1,2,2,1,2),(1,0,1,2,0),(2,2,1,2,2),(1,0,0,0,2),(2,0,0,2,2),(1,0,1,0,0),\\
&(1,0,0,1,0),(0,2,2,2,0),(1,1,0,2,2),(0,1,2,1,0),(2,1,0,2,2),(2,1,1,2,1),\\
&(2,0,2,1,0),(1,1,2,1,1),(2,0,0,0,1),(1,0,0,1,1),(2,0,2,0,0),(2,0,0,2,0),\\
&(0,1,1,1,0),(2,2,0,1,1),(0,2,1,2,0),(1,2,0,1,1)\},\\
B_3=
\{&(2,0,2,2,2),(1,1,1,2,2),(0,0,2,2,1),(0,1,1,1,1),(1,0,0,2,1),(1,1,1,2,1),\\
&(1,2,2,1,1),(2,2,1,1,0),(1,0,1,2,1),(1,0,1,0,2),(2,2,0,0,0),(1,0,1,1,1),\\
&(2,2,2,1,1),(0,0,1,1,2),(0,2,2,2,2),(2,0,0,1,2),(2,2,2,1,2),(2,1,1,2,2),\\
&(1,1,2,2,0),(2,0,2,1,2),(2,0,2,0,1),(1,1,0,0,0)\},\\
B_4=
\{&(1,1,1,1,2),(1,2,0,0,0),(1,2,2,2,0),(2,2,0,2,1),(2,0,0,1,1),(0,1,2,0,2),\\
&(1,2,1,2,1),(1,1,0,2,0),(2,2,2,2,0),(0,0,0,2,0),(1,2,1,1,2),(2,2,2,2,1),\\
&(2,1,0,0,0),(2,1,1,1,0),(1,1,0,1,2),(1,0,0,2,2),(0,2,1,0,1),(2,1,2,1,2),\\
&(2,2,0,1,0),(1,1,1,1,0),(0,0,0,1,0),(2,1,2,2,1)\},\\
B_5=
\{&(1,2,0,0,1),(0,0,2,0,0),(0,1,0,2,2),(2,0,2,1,1),(0,2,2,2,1),(0,2,0,0,2),\\
&(1,0,2,1,1),(2,1,2,2,2),(1,0,2,0,1),(0,0,0,0,1),(2,1,0,1,0),(2,1,0,0,2),\\
&(0,0,1,0,0),(0,2,0,1,1),(1,0,1,2,2),(0,1,1,1,2),(0,1,0,0,1),(2,0,1,2,2),\\
&(1,2,1,1,1),(2,0,1,0,2),(0,0,0,0,2),(1,2,0,2,0)\},\\
B_6=
\{&(1,1,0,0,1),(0,1,1,2,1),(0,1,2,1,2),(0,0,0,1,2),(0,0,2,1,1),(1,0,2,2,0),\\
&(2,1,1,0,0),(2,0,1,2,1),(2,1,1,0,1),(1,1,1,0,1),(0,0,1,0,2),(2,2,0,0,2),\\
&(0,2,2,1,2),(0,2,1,2,1),(0,0,0,2,1),(0,0,1,2,2),(2,0,1,1,0),(1,2,2,0,0),\\
&(1,0,2,1,2),(1,2,2,0,2),(2,2,2,0,2),(0,0,2,0,1)\},\\
B_7=
\{&(0,2,0,2,2),(2,2,0,2,0),(0,2,0,0,1),(2,2,2,0,1),(1,2,2,2,1),(1,0,0,0,1),\\
&(0,2,0,1,2),(0,1,1,0,2),(1,0,1,1,0),(0,1,2,0,1),(2,1,1,1,1),(0,1,0,1,1),\\
&(1,1,0,1,0),(0,1,0,0,2),(1,1,1,0,2),(2,1,1,1,2),(2,0,0,0,2),(0,1,0,2,1),\\
&(0,2,2,0,1),(2,0,2,2,0),(0,2,1,0,2),(1,2,2,2,2)\},\\
B_8=
\{&(1,0,2,2,1),(1,2,1,1,0),(0,2,1,2,2),(2,1,0,0,1),(1,2,1,2,0),(2,0,0,1,0),\\
&(1,0,2,2,2),(0,0,1,2,0),(1,1,1,2,0),(2,1,2,0,1),(1,0,1,1,2),(2,0,1,1,2),\\
&(2,1,2,2,0),(0,1,2,1,1),(1,2,0,0,2),(2,1,2,1,0),(1,0,0,2,0),(2,0,1,1,1),\\
&(0,0,2,1,0),(2,2,2,1,0),(1,2,1,0,2),(2,0,2,2,1)\},\\
B_9=
\{&(2,1,1,0,2),(0,2,1,1,1),(1,2,1,0,0),(1,1,2,0,1),(0,2,1,1,0),(2,1,1,2,0),\\
&(0,2,2,0,0),(0,2,2,1,0),(2,0,1,0,1),(1,2,0,2,2),(0,0,0,2,2),(1,2,2,0,1),\\
&(0,1,2,2,2),(2,1,2,0,0),(2,2,1,0,2),(0,1,2,2,0),(1,2,2,1,0),(0,1,1,0,0),\\
&(0,1,1,2,0),(1,0,2,0,2),(2,1,0,1,1),(0,0,0,1,1)\},\\
B_{10}=
\{&(2,1,2,1,1),(0,1,0,0,0),(0,2,1,1,2),(0,0,1,1,0),(2,0,2,0,2),(0,2,0,1,0),\\
&(2,2,1,1,2),(2,2,1,1,1),(0,1,1,0,1),(2,2,1,0,0),(2,2,2,0,0),(1,2,1,2,2),\\
&(0,2,0,0,0),(0,1,2,2,1),(0,0,2,2,0),(1,0,1,0,1),(0,1,0,2,0),(1,1,2,2,1),\\
&(1,1,2,2,2),(0,2,2,0,2),(1,1,2,0,0),(1,1,1,0,0)\},\\
B_{11}=
\{&(1,2,0,2,1),(1,2,0,1,2),(1,2,1,0,1),(0,2,2,1,1),(1,1,1,1,1),(2,1,0,2,0),\\
&(0,0,2,2,2),(2,2,1,2,1),(2,2,0,2,2),(1,1,0,2,1),(1,0,2,0,0),(2,1,0,1,2),\\
&(2,1,0,2,1),(2,1,2,0,2),(0,1,1,2,2),(2,2,2,2,2),(1,2,0,1,0),(0,0,1,1,1),\\
&(1,1,2,1,2),(1,1,0,1,1),(2,2,0,1,2),(2,0,1,0,0)\}.
\end{align*}

\section{An exponent bound and its application}
\label{sec-exp-bound}

In this section, we present an exponent bound on a group $G$ containing a $(v,m,k,\la)$-SEDF, and use it to prove nonexistence results for the case $m=2$.

Let $G = H \times L$ be an abelian group. Each element of $G$ can be expressed uniquely as $h \ell$ for $h \in H$ and $\ell \in L$, and the \emph{natural projection} $\rho$ from $G$ to $H$ is defined by $\rho(h \ell) = h$. Each $\wti{\chi} \in \wh{H}$ induces a \emph{lifting character} $\chi \in \wh{G}$ satisfying $\chi(g)=\wti{\chi}(\rho(g))$ for every $g \in G$. From now on, we shall use $G_p$ to denote the Sylow $p$-subgroup of the group $G$, where $p$ is a prime. 
For a positive integer $n$, we use $\ze_n$ to denote the primitive $n$-th complex root of unity $e^{2\pi i/n}$.

A prime $p$ is a \emph{primitive root} modulo $n$ if 
$p$ is a generator of the multiplicative group of integers modulo $n$.
A prime $p$ is \emph{self-conjugate} modulo $n$ if there is an integer $j$ for which $p^j \equiv -1 \pmod{n_p}$, where $n_p$ is the largest divisor of $n$ that is not divisible by~$p$. 
If a prime $p$ is a primitive root modulo $n$, then $p$ is self-conjugate modulo~$n$.
For $X \in \Z[\ze_n]$, we use $(X)$ to denote the principal idea generated by $X$ in $\Z[\ze_n]$. We begin with a preparatory lemma.

\begin{lemma}\label{lem-XX'}
Let $p$ and $q$ be primes, let $q$ be a primitive root modulo $p^e$, and let $q^{f} \mid\mid u$ for some positive integer~$f$.
Suppose that $X, X^{\pr} \in \Z[\ze_{p^e}]$ satisfy $X\ol{X^{\pr}}=u$.
Then either $X \equiv 0 \pmod{q^{\lc f/2 \rc}}$ or $X^{\pr} \equiv 0 \pmod{q^{\lc f/2 \rc}}$. Furthermore, if $X=X^{\pr}$, then $f$ is even.
\end{lemma}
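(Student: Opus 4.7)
The plan is to translate the multiplicative condition $X\overline{X^{\pr}} = u$ into a statement about ideals in $\Z[\zp]$ and then exploit the fact that $q$ is inert in this ring.

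First I would recall the decomposition law for rational primes in $\Z[\zp]$: when $q \neq p$, the prime $q\Z$ factors in $\Z[\zp]$ into distinct prime ideals each of residue degree equal to the multiplicative order of $q$ modulo $p^e$. Under the hypothesis that $q$ is a primitive root modulo $p^e$, this order equals $\varphi(p^e)$, which is the degree of the extension, so $(q)$ is itself a prime ideal of $\Z[\zp]$. (If $q = p$ cannot arise because $p$ is not a primitive root modulo $p^e$ for $e \ge 1$, so we may assume $q \neq p$.) Let $v_q$ denote the $(q)$-adic valuation on $\Z[\zp]$. Since $q \in \Z$, complex conjugation fixes the prime ideal $(q)$, and therefore $v_q(\ol{Y}) = v_q(Y)$ for every $Y \in \Z[\zp]$.

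Next I would take valuations in the given equation. Since $q^f \mid\mid u$ as rational integers and $(q)$ is prime in $\Z[\zp]$ with $(u) = (q)^f \cdot I$ where $I$ is coprime to $(q)$, we get
\[
v_q(X) + v_q(\ol{X^{\pr}}) = v_q(u) = f,
\]
and by the previous remark this becomes $v_q(X) + v_q(X^{\pr}) = f$. By pigeonhole, at least one of the two nonnegative integers on the left is $\ge \lc f/2 \rc$. Since $q$ generates the prime ideal $(q)$, this is equivalent to saying $q^{\lc f/2 \rc}$ divides the corresponding element in $\Z[\zp]$, which is the first conclusion.

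For the final clause, if $X = X^{\pr}$ then $v_q(X) = v_q(X^{\pr})$, so $2 v_q(X) = f$ and $f$ must be even. The main subtlety to get right is simply the inertia claim, that $q$ being a primitive root modulo $p^e$ forces $(q)$ to be prime in $\Z[\zp]$; once that is in hand the argument is a routine ideal-theoretic pigeonhole.
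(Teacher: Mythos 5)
Your proof is correct and follows essentially the same route as the paper: both rest on the fact that $q$ being a primitive root modulo $p^e$ makes $(q)$ an inert (prime) ideal of $\Z[\zpe]$, after which the exponent $f$ is split between $(X)$ and $(\ol{X^{\pr}})$, and the parity claim follows from the conjugation-invariance of $(q)$. Your valuation-theoretic phrasing, and your direct observation that conjugation fixes $(q)$ because $q$ is rational (where the paper instead cites self-conjugacy), are only cosmetic differences from the paper's ideal-divisibility argument.
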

\begin{proof}
Since $X\ol{X^{\pr}}=u$ and $q^f \mid\mid u$, we have $X\ol{X^{\pr}} \equiv 0 \pmod{q^f}$. Now $q$ is a primitive root modulo $p^e$, so $(q)$ is a prime ideal in $\Z[\ze_{p^e}]$ \cite[Chapter 13, Theorem 2]{IR}, which we denote by $\cQ$. Hence
$$
X\ol{X^{\pr}} \equiv 0 \pmod{\cQ^f}
$$
and so
$$
\cQ^f \mid (X)(\ol{X^{\pr}}).
$$
Therefore either $\cQ^{\lc f/2 \rc} \mid (X)$ or $\cQ^{\lc f/2 \rc} \mid (X^{\pr})$, and so either $X \equiv 0 \pmod{q^{\lc f/2 \rc}}$ or $X^{\pr} \equiv 0 \pmod{q^{\lc f/2 \rc}}$.

Now suppose $X=X^{\pr}$, so that $\cQ^{\lc f/2 \rc} \mid (X)$. Since $q$ is a primitive root modulo $p^e$, we have that $q$ is self-conjugate modulo~$p^e$. This implies $\cQ$ is invariant under complex conjugation \cite[Chapter VI, Corollary 15.5]{BJL}, so that $\cQ^{\lc f/2 \rc} \mid (\ol{X})$ and therefore $\cQ^{2\lc f/2 \rc} \mid (X)(\ol{X})$. But $q^f \mid\mid u$, so $\cQ^f \mid\mid (X)(\ol{X})$. Therefore $f$ is even.
\end{proof}

We now prove the following exponent bound.

\begin{theorem}\label{thm-exp}
Suppose there exists a $(v,m,k,\la)$-SEDF in a group $G$.
Let $p$ and $q$ be primes such that $p^d \mid\mid v$ and $q^f \mid\mid \la$ for some positive integers $d$ and $f$, and suppose that $q$ is a primitive root modulo $p^d$. Let $G_p$ be the Sylow $p$-subgroup of $G$. Then
$$
\exp(G_p) \le v/q^{\lc f/2\rc}.
$$
\end{theorem}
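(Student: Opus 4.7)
My strategy is proof by contradiction, using a faithful character of a cyclic quotient of order $p^e = \exp(G_p)$ together with Lemma~\ref{lem-XX'}. Assume for contradiction that $p^e > v/q^{\lceil f/2\rceil}$. Since $G$ is abelian, the structure theorem applied to $G_p$ combined with the decomposition $G = G_p \times G_{p'}$ furnishes a subgroup $U \le G$ with cyclic quotient $T := G/U \cong \Z_{p^e}$; write $\rho: G \to T$ for the natural projection. Because $q$ is a primitive root modulo $p^d$ and $e \le d$, $q$ is also a primitive root modulo $p^e$, so the hypothesis of Lemma~\ref{lem-XX'} is available. Let $\chi$ be a faithful character of $T$, pulled back to $G$; then $\chi$ has order $p^e$ and takes values in $\Z[\zeta_{p^e}]$.

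Applying $\chi$ to \eqref{eqn-chardef}, for each $j$ set $X = \chi(D_j)$ and $X' = \chi(D) - \chi(D_j)$, giving $X\overline{X'} = -\lambda$. Lemma~\ref{lem-XX'} then yields that $q^{\lceil f/2\rceil}$ divides either $X$ or $X'$ in $\Z[\zeta_{p^e}]$. The next step is to translate this divisibility into a coefficient constraint. Writing $\rho(D_j) = \sum_{i=0}^{p^e-1} a_i t^i$ where $t$ generates $T$, each $a_i = |D_j \cap t^i U|$ lies in $\{0, 1, \ldots, |U|\}$ with $|U| = v/p^e < q^{\lceil f/2\rceil}$. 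Expanding $\chi(D_j) = \sum_i a_i \zeta_{p^e}^i$ and reducing to the integral basis $\{1, \zeta_{p^e}, \ldots, \zeta_{p^e}^{\phi(p^e)-1}\}$ via $\Phi_{p^e}(\zeta_{p^e})=0$, each basis coefficient takes the form $a_{i_1} - a_{i_2}$ for indices $i_1, i_2$ in the same coset of the unique order-$p$ subgroup $H_T \le T$. Being divisible by $q^{\lceil f/2\rceil}$ but bounded in absolute value by $|U| < q^{\lceil f/2\rceil}$, each such coefficient must vanish, so $(a_i)$ is constant on $H_T$-cosets. Equivalently, $\rho(D_j)$ lies in the principal ideal $(H_T) \subset \Z[T]$ generated by the group-ring element $\sum_{h \in H_T} h$. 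The same argument applied to $\rho(D \setminus D_j)$, whose coefficients are also bounded by $|U|$, handles the second case.

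To finish, apply $\rho$ to the SEDF equation to get $\rho(D_j)\sum_{i \ne j}\rho(D_i^{(-1)}) = \lambda(|U|T - 1)$ in $\Z[T]$. The dichotomy above gives either $\rho(D_j) \in (H_T)$ or $\rho(D \setminus D_j) \in (H_T)$; in the latter case $\rho(\sum_{i \ne j}D_i^{(-1)}) = \rho(D \setminus D_j)^{(-1)}$ also belongs to $(H_T)$, since $H_T$-invariance of coefficients is preserved under inversion. In either case, the ideal property of $(H_T)$ forces the left-hand side into $(H_T)$. On the right-hand side, $|U|T \in (H_T)$ trivially (all coefficients equal $|U|$), but $\lambda \cdot 1 \notin (H_T)$: on the coset $H_T$ the identity carries coefficient $\lambda > 0$ while the other $p-1$ elements carry coefficient $0$, violating $H_T$-invariance. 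So the right-hand side lies outside $(H_T)$, a contradiction, whence $\exp(G_p) \le v/q^{\lceil f/2\rceil}$.

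The main technical obstacle I anticipate is the integral-basis reduction, which hinges on the strict inequality $|U| < q^{\lceil f/2\rceil}$ to force each differenced coefficient to be exactly $0$ rather than some nonzero multiple of $q^{\lceil f/2\rceil}$. Once this is in hand, the two branches of Lemma~\ref{lem-XX'} collapse into the same ideal-theoretic contradiction coming from the identity element's anomalous coefficient in $\lambda(G-1)$.
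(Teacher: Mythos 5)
Your proposal is correct and follows essentially the same route as the paper: project onto a cyclic $p$-group of order $\exp(G_p)$ (direct factor in the paper, quotient $G/U$ for you), apply \eqref{eqn-chardef} with a faithful/generating character and Lemma~\ref{lem-XX'} to get $q^{\lc f/2\rc}$-divisibility of the projected character sum, and play that off against the coefficient bound $v/p^e$ via an integral basis of $\Z[\zeta_{p^e}]$. The only difference is the endgame: the paper reads off $q^{\lc f/2\rc}\le v/p^e$ directly from the existence of a nonzero coefficient difference (since the character sum is nonzero, its product being $-\la$), whereas you assume the negation and derive a contradiction through the ideal generated by the order-$p$ subgroup and the projected SEDF equation---valid, though it can be shortcut by noting that coset-constancy already forces the character sum to vanish, contradicting \eqref{eqn-chardef}.
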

\begin{proof}
Let $\{D_1,D_2,\ldots,D_m\}$ be the SEDF, and let $D = \bigcup_{i=1}^m D_i$.
Let $\exp(G_p)=p^e$, let $H$ be a cyclic $p$-subgroup of $G$ of order~$p^e$ occurring as a direct factor of~$G$, and let $\rho$ be the natural projection from $G$ to $H$. Let $\wti{\chi}$ be a generator of $\wh{H}$, and let $\chi$ be the associated lifting character on~$G$. Then
\begin{equation}\label{eqn-minusla}
\wti{\chi}(\rho(D_1))\ol{\wti{\chi}(\rho(D-D_1))} =
\chi(D_1)\ol{\chi(D-D_1)} =-\la
\end{equation}
by (\ref{eqn-chardef}).
Now $q$ is a primitive root modulo $p^d$, so $q$ is also a primitive root modulo $p^e$ \cite[Chapter~4, Lemma~3]{IR}. Apply Lemma~\ref{lem-XX'} with $X = \wti{\chi}(\rho(D_1))$ and $X' = \wti{\chi}(\rho(D-D_1))$ and $u = -\lambda$ to show that there is a subset $D'$ of $G$ (either $D_1$ or $D\sm D_1$) for which
$\wti{\chi}(\rho(D')) \equiv 0 \pmod{q^{\lc f/2\rc}}$.
Write 
\begin{equation}\label{eqn-rhoD'di}
\wti{\chi}(\rho(D'))=\sum_{i=0}^{p^{e-1}-1} d_i \ze_{p^e}^i, 
\end{equation}
where $d_i=\sum_{j=0}^{p-1} c_{i,j} \ze_p^j$ and each $c_{i,j} \in \Z$. We have shown that
$q^{\lc f/2\rc} \mid \sum_{i=0}^{p^{e-1}-1} d_i \ze_{p^e}^i$. 
Since $\{1,\ze_{p^e},\ze_{p^e}^2,\ldots,\ze_{p^e}^{p^{e-1}-1}\}$ is a linearly independent set over $\Q[\ze_p]$, this implies that
\[
q^{\lc f/2\rc} \mid d_i \quad \mbox{for each $i$}.
\]
Note that $d_i = \sum_{j=0}^{p-1} c_{i,j} \ze_p^j =\sum_{j=0}^{p-2} (c_{i,j}-c_{i,p-1})\ze_p^j$. 
Because $\{1,\ze_p,\ze_p^2,\ldots,\ze_p^{p-2}\}$ is an integral basis of $\Z[\ze_{p}]$, we then obtain
\begin{equation}\label{eqn-qf2cij}
q^{\lc f/2\rc} \mid (c_{i,j}-c_{i,p-1}) \quad \mbox{for each $i$ and $j$}.
\end{equation}
Since $\rho$ is the natural projection from $G$ to $H$, and $|H|=p^e$, we have $0 \le c_{i,j} \le \frac{v}{p^e}$ for each $i$ and $j$, and therefore
$-\frac{v}{p^e} \le c_{i,j}-c_{i,p-1} \le \frac{v}{p^e}$ for each $i$ and $j$.
Furthermore, from \eqref{eqn-minusla} and~\eqref{eqn-rhoD'di}, $d_i' \ne 0$ for some $i'$ and therefore $c_{i',j'}-c_{i',p-1} \ne 0$ for some $j'$.
It follows from \eqref{eqn-qf2cij} that
$q^{\lc f/2\rc} \le v/p^e$, or equivalently $p^e \le v/q^{\lc f/2\rc}$.
\end{proof}

Very few nonexistence results for a nontrivial $(v,m,k,\la)$-SEDF with $m=2$ are known. We now illustrate the use of Theorem~\ref{thm-exp} by ruling out several families of such parameter sets. When $m=2$ and $k$ is prime, the existence question is already answered: we must have $\la=1$ \cite[Lemma 3.4]{HP}, and then by Proposition~\ref{prop-smallla} the parameters have the form $(k^2+1,2,k,1)$. We therefore consider $m=2$ and $k = p_1 p_2$ in Theorem~\ref{thm-m2twoprimes} below, where $p_1, p_2$ are distinct primes and $p_1 < p_2$. The case $\la=1$ is dealt with in Proposition~\ref{prop-smallla}, and the cases $\la \ge p_1p_2$ are ruled out by Proposition~\ref{prop-nonexistence}~(6).
In view of the counting relation $p_1^2p_2^2 = \la (v-1)$ given by \eqref{eqn-counting}, the remaining cases are $\la \in \{p_1,p_2,p_1^2\}$.

\begin{theorem}\label{thm-m2twoprimes}
Let $p_1$ and $p_2$ be distinct primes with $p_1<p_2$.
\begin{enumerate}[(1)]
\item Let $p$ be a prime such that $p^d \mid\mid p_1p_2^2+1$ for some positive integer~$d$. If $p_1$ is a primitive root modulo $p^d$ and $p_2^2+1 \le p$, then a $(p_1p_2^2+1,2,p_1p_2,p_1)$-SEDF does not exist.
\item Let $p$ be a prime such that $p^d \mid\mid p_1^2p_2+1$ for some positive integer~$d$. If $p_2$ is a primitive root modulo $p^d$ and $p_1^2+1 \le p$, then a $(p_1^2p_2+1,2,p_1p_2,p_2)$-SEDF does not exist.
\item Let $p$ be a prime such that $p^d \mid\mid p_2^2+1$ for some positive integer~$d$. If $p_1$ is a primitive root modulo $p^d$ and $p_2^2+1<p_1p$, or if $p_1$ is a primitive root modulo $p$ and $p_2^2+1=p_1p$, then a $(p_2^2+1,2,p_1p_2,p_1^2)$-SEDF does not exist.
\end{enumerate}
\end{theorem}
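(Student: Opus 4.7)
My plan is, for each of the three parts, to assume for contradiction that the prescribed SEDF exists and to apply Theorem~\ref{thm-exp} with the prime $q$ in its hypothesis taken to be the prime factor of $\la$. Since $\la \in \{p_1,p_2,p_1^2\}$, we have $q^f \mid\mid \la$ with $\lc f/2 \rc = 1$, so Theorem~\ref{thm-exp} yields $\exp(G_p) \le v/q$. On the other hand, $p \mid v$ forces $\exp(G_p) \ge p$.

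For parts~(1), (2), and the first case of part~(3) (when $p_2^2+1 < p_1 p$), the hypothesis on $p$ forces the strict inequality $v/q < p$. For instance, in part~(1) we have $v/q = p_2^2 + \tfrac{1}{p_1} < p_2^2 + 1 \le p$; in the first case of part~(3) we have $v/q = (p_2^2+1)/p_1 < p$ directly. Combining with the bounds above yields $p \le \exp(G_p) \le v/q < p$, a contradiction.

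The main obstacle is the second case of part~(3), where $p_2^2+1 = p_1 p$ and the exponent bound is tight ($v/q = p$), giving no contradiction on its own. Since ``$p_1$ is a primitive root modulo $p$'' precludes $p = p_1$, we have $d = 1$ and $G_p \cong \Z_p$. I plan to revisit the proof of Theorem~\ref{thm-exp} in this equality setting to extract additional structural information. Let $H = G_p$, let $\rho$ be the natural projection onto $H$, and let $\chi$ be a lifting character of order $p$. Then \eqref{eqn-chardef} with $m=2$ gives $\chi(D_1)\ol{\chi(D_2)} = -p_1^2$. Because $p_1$ is a primitive root modulo $p$, the ideal $\cQ := (p_1)$ is prime and self-conjugate in $\Z[\zp]$, and the $\cQ$-adic valuations of $(\chi(D_1))$ and $(\chi(D_2))$ sum to $2$. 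The valuation splits $(2,0)$ and $(0,2)$ are impossible: $\cQ^2 \mid (\chi(D_j))$ would force $p_1^2 \mid (a_i - a_{p-1})$ in the notation of the proof of Theorem~\ref{thm-exp}, but $|a_i - a_{p-1}| \le v/p = p_1 < p_1^2$ then makes all $a_i$ equal and so $\chi(D_j) = 0$, contradicting $-p_1^2 \ne 0$. So both valuations equal $1$, and the same coefficient analysis gives $a_i \in \{0,p_1\}$ for every coefficient of $\rho(D_j) \in \Z[H]$.

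Consequently each $D_j$ is a union of cosets of the order-$p_1$ subgroup $L$ complementary to $H$ in $G$. Writing $D_j = \wti{D}_j L$ in $\Z[G]$ with $\wti{D}_j \subseteq H$, I obtain $D_1 D_2^{(-1)} = p_1 \wti{D}_1 \wti{D}_2^{(-1)} L$, whose coefficient at each $g \in G$ depends only on the $H$-component of $g$. But the SEDF equation requires this coefficient to be $0$ at $g = 1_G$ and $p_1^2$ at any non-identity $\ell \in L$, two elements sharing the same $H$-component. This contradiction dispatches the remaining case.
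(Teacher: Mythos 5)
Your proposal is correct and follows essentially the same route as the paper: parts (1), (2) and the strict-inequality case of (3) are the same direct applications of Theorem~\ref{thm-exp}, and your treatment of the tight case $p_2^2+1=p_1p$ rests on the same key facts the paper uses (inertness of $(p_1)$ in $\Z[\ze_p]$ as in Lemma~\ref{lem-XX'}, plus the coefficient bound $v/p=p_1$ on the projection to $\Z_p$). The only minor difference is the endgame: you show both $D_1,D_2$ are unions of cosets of the order-$p_1$ subgroup and compare group-ring coefficients of $D_1D_2^{(-1)}$ at $1$ and at a nonidentity element of that subgroup, whereas the paper needs only one such set (from Lemma~\ref{lem-XX'}) and derives the contradiction via a character nonprincipal on $\Z_{p_1}$, handling the all-equal-coefficients case by the divisibility $p\mid p_1p_2$ rather than by nonvanishing of the character value.
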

\begin{proof}
Parts (1), (2) and the first part of (3) are each direct applications of Theorem~\ref{thm-exp}, whereas the second part of (3) requires additional arguments; we give the proof for both parts of~(3).
Suppose, for a contradiction, that $\{D_1,D_2\}$ is a $(p_2^2+1,2,p_1p_2,p_1^2)$-SEDF in a group $G$ of order~$p_2^2+1$.

If $p_1$ is a primitive root modulo $p^d$ and $p_2^2+1 < p_1p$, then by Theorem~\ref{thm-exp} we have $\exp(G_p) \le \frac{p_2^2+1}{p_1} < p$. This contradicts that $p$ is a prime divisor of $|G| = p_2^2+1$.

If $p_1$ is a primitive root modulo $p$ and $p_2^2+1=p_1p$, then $G = \Z_{p_1} \times \Z_{p}$.
Let $\rho$ be the natural projection from $G$ to $\Z_{p}$.
Let $\wti{\chi}$ be a generator of $\wh{\Z_p}$.
Then
\[
\wti{\chi}(\rho(D_1))\ol{\wti{\chi}(\rho(D_2))} = -p_1^2
\]
by \eqref{eqn-chardef}, so by Lemma~\ref{lem-XX'} we may choose $D'$ to be one of $D_1$ and $D_2$ so that
$\wti{\chi}(\rho(D')) \equiv 0 \pmod{p_1}$.
Since $\wti{\chi}$ is a generator of $\wh{\Z_p}$, there is a generator $h$ of $\Z_p$ for which $\wti{\chi}(h)=\ze_p$.
Write $\rho(D')=\sum_{i=0}^{p-1}d_ih^i$, where $0 \le d_i \le p_1$ for each $i$, and then
$$
\wti{\chi}(\rho(D'))=\sum_{i=0}^{p-1}d_i\ze_{p}^i = \sum_{i=0}^{p-2}(d_i-d_{p-1})\ze_{p}^i.
$$
Since $\wti{\chi}(\rho(D')) \equiv 0 \pmod{p_1}$, we have
$p_1 \mid d_i-d_{p-1}$ for each $i$. Using $0 \le d_i \le p_1$ for each $i$, we distinguish two cases:
\begin{description}
\item[Case 1:] $d_i \in \{0, p_1\}$ for each $i$ satisfying $0 \le i \le p-1$.
This gives $\rho(D') = p_1 \sum_{i \in I} h^i$ for some subset $I$ of $\{0,1,\dots,p-1\}$, which implies that $D'$ is a union of cosets of $\Z_{p_1}$. But then for a character $\chi \in \wh{G}$ which is nonprincipal on $\Z_{p_1}$ we have $\chi(D') = 0$, contradicting~\eqref{eqn-chardef} because $D' = D_1$ or~$D_2$.

\item[Case 2:] $d_0=d_1=\dots=d_{p-1}$.
Then $p$ divides $|D'| = p_1 p_2$, so either $p=p_1$ or $p=p_2$. Both of these contradict the given conditions on $p, p_1, p_2$.
\end{description}
\end{proof}

\begin{remark}
For example, Theorem~\ref{thm-m2twoprimes} rules out the existence of a $(v,m,k,\la)$-SEDF for
\begin{align*}
(v,m,k,\la) \in \{ &(19,2,6,2), (26,2,10,4), (46,2,15,5), (118,2,39,13), (122,2,22,4), \\
		   &(154,2,51,17), (172,2,57,19) \}.
\end{align*}
Theorem~\ref{thm-exp} rules out further parameter sets not excluded by Theorem~\ref{thm-m2twoprimes} (for which $k$ is not the product of two distinct primes), including
\begin{align*}
(v,m,k,\la) \in \{ &(37,2,12,4), (101,2,20,4), (101,2,30,9), (101,2,40,16), (122,2,44,16), \\
		   &(127,2,42,14), (129,2,48,18), (163,2,18,2), (163,2,36,8), (163,2,54,18),  \\
                   &(163,2,72,32), (177,2,44,11), (181,2,60,20), (197,2,28,4), (197,2,42,9), \\
		   &(197,2,56,16), (197,2,70,25), (197,2,84,36) \}.
\end{align*}
All known examples of a nontrivial $(v,2,k,\la)$-SEDF have $v$ a prime, except for those specified in Proposition~$\ref{prop-known}~(1)$ and~$(2)$.
The only cases for a $(v,2,k,\la)$-SEDF with $v \le 50$ that remain open are
$$
(v,m,k,\la) \in \{ (28,2,9,3), (33,2,8,2), (49,2,12,3), (50,2,14,4), (50,2,21,9) \}.
$$
The parameter set $(45,2,22,11)$ does not appear on this list, even though the existence of an SEDF with these parameters in $\Z_3^2 \times \Z_5$ is not ruled out by Theorems~\ref{thm-exp} and \ref{thm-m2twoprimes}: by Theorem~\ref{thm-characterization}, the existence of an SEDF with these parameters would imply the existence of a nontrivial regular $(45,22,10,11)$-PDS, which is excluded by \cite[Corollary 6.3]{Ma94}.
\end{remark}

\section{SEDFs with $m>2$}
\label{sec-m>2}
Throughout this section, we suppose that $\{D_1,D_2,\dots,D_m\}$ is a nontrivial $(v,m,k,\la)$-SEDF in a group $G$ with $m>2$, and write $D = \bigcup_{i=1}^m D_i$. From \eqref{eqn-solsfre}, in order for $\lpc$ and $\lmc$ to be integers we must have
\begin{equation}\label{eqn-bcac}
\sqrt{1+\tfrac{4\la}{|\chi(D)|^2}}=\frac{\bc}{\ac} \quad \mbox{for each $\chi \in \wh{G}^N$}, \quad
\mbox{where $\ac, \bc \in \Z$ and $\bc>\ac>0$ and $\gcd(\ac,\bc)=1$}.  	
\end{equation}
Then \eqref{eqn-solsfre} becomes
\begin{equation}\label{eqn-solsfresim}
\lpc=\frac{m}{2}-\frac{\ac(m-2)}{2\bc}, \quad
\lmc=\frac{m}{2}+\frac{\ac(m-2)}{2\bc}
\quad \mbox{for each $\chi \in \wh{G}^N$}.
\end{equation}
It is shown in \cite[Lemma 3.3]{BJWZ} and \cite[Lemma 3.5]{MS} that $(\lpc,\lmc) \notin \{(0,m), (1,m-1), (\frac{m}{2},\frac{m}{2})\}$ for $m>2$; this is an immediate consequence of~\eqref{eqn-solsfresim}. Rewrite the expressions \eqref{eqn-sols} for $\alp$ and $\alm$ using \eqref{eqn-bcac}, and then substitute into \eqref{eqn-chiDj} to obtain
\begin{equation}\label{eqn-chiDjsim}
\{\chi(D_j) \mid 1 \le j \le m\} = \Big\{ \frac{\ac+\bc}{2\ac}\chi(D), \frac{\ac-\bc}{2\ac}\chi(D) \Big\}
\quad \mbox{for each $\chi \in \wh{G}^N$}.
\end{equation}
Rearrange \eqref{eqn-bcac} as
\[
|\chi(D)|^2=\frac{4\ac^2\la}{\bc^2-\ac^2} \quad \mbox{for each $\chi \in \wh{G}^N$},
\]
and then combine with \eqref{eqn-chiDjsim} to give
\begin{align}
& \Big\{\big(|\chi(D)|^2,|\chi(D_j)|^2\big) \mid 1 \le j \le m \Big\} \nonumber \\
& \hspace{5em} = \bigg\{
\bigg(\frac{4\ac^2\la}{\bc^2-\ac^2}, \, \frac{(\bc+\ac)\la}{\bc-\ac}\bigg),
\bigg(\frac{4\ac^2\la}{\bc^2-\ac^2}, \, \frac{(\bc-\ac)\la}{\bc+\ac}\bigg)
\bigg\}
\quad \mbox{for each $\chi \in \wh{G}^N$}.  \label{eqn-chiDjsqsim}
\end{align}

We now derive some divisibility conditions on the values of $\ac$ and $\bc$, which restrict the possible values of $|\chi(D)|^2$ and $|\chi(D_j)|^2$ via~(\ref{eqn-chiDjsqsim}).

\begin{lemma}\label{lem-abpair}
Let $\ac,\bc$ be defined as in \eqref{eqn-bcac} (with reference to the set $D = \bigcup_{i=1}^m D_i$ associated with a nontrivial $(v,m,k,\la)$-SEDF $\{D_1,D_2,\dots,D_m\}$ in a group $G$ with $m>2$).
Then
\begin{enumerate}[(1)]
\item $2\bc \mid \bc m - \ac(m-2)$, and $\bc \mid m-2$
\item $(\bc-\ac) \mid (\bc+\ac)\la$, and $(\bc+\ac) \mid (\bc-\ac)\la$
\item $(\bc^2-\ac^2) \mid 4\la$, and if $\bc+\ac$ is odd then $(\bc^2-\ac^2) \mid \la$.
\end{enumerate}
\end{lemma}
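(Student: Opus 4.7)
The plan is to handle the three parts as short calculations drawing on the formulas (\ref{eqn-solsfresim}) and (\ref{eqn-chiDjsqsim}), together with the fact that the squared modulus of a character value is both rational and an algebraic integer, hence a positive integer.

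For part (1), the idea is to rewrite $\lpc = \tfrac{m}{2} - \tfrac{\ac(m-2)}{2\bc}$ from (\ref{eqn-solsfresim}) as $\lpc = (\bc m - \ac(m-2))/(2\bc)$. Since $\lpc$ is a non-negative integer, this forces $2\bc \mid \bc m - \ac(m-2)$. Reducing the divisibility modulo $\bc$ gives $\bc \mid \ac(m-2)$, and combining with $\gcd(\ac,\bc) = 1$ from (\ref{eqn-bcac}) yields $\bc \mid m-2$.

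For part (2), the key observation is that $|\chi(D_j)|^2 = \chi(D_j)\overline{\chi(D_j)}$ is an algebraic integer (being a product of algebraic integers) and lies in $\Q$, so it is a positive integer. By (\ref{eqn-chiDjsqsim}) both $(\bc+\ac)\la/(\bc-\ac)$ and $(\bc-\ac)\la/(\bc+\ac)$ occur as values of $|\chi(D_j)|^2$ as $j$ ranges over $\{1,\dots,m\}$, which immediately gives the two claimed divisibilities.

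For part (3), the first claim follows from the analogous observation applied to $|\chi(D)|^2$: rearranging (\ref{eqn-bcac}) gives $|\chi(D)|^2 = 4\ac^2\la/(\bc^2-\ac^2)$, which must be a positive integer. Since any prime dividing both $\ac$ and $\bc^2 - \ac^2$ would also divide $\bc$, contradicting $\gcd(\ac,\bc)=1$, we have $\gcd(\ac^2,\bc^2-\ac^2)=1$, and therefore $(\bc^2 - \ac^2) \mid 4\la$. For the second claim, suppose $\bc + \ac$ is odd; then $\bc - \ac$ is also odd (same parity as $\bc+\ac$), and $\gcd(\bc-\ac,\bc+\ac)$ divides $\gcd(2\ac,2\bc) = 2$, so it equals $1$. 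Feeding this coprimality into the two divisibilities of part (2) yields $(\bc-\ac) \mid \la$ and $(\bc+\ac) \mid \la$, and coprimality once more gives $(\bc^2 - \ac^2) \mid \la$. No step presents a serious obstacle; the proof is essentially a clean packaging of these elementary number-theoretic manipulations together with the character-integrality principle.
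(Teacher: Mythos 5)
Your proposal is correct and follows essentially the same route as the paper: integrality of $\lpc$ from \eqref{eqn-solsfresim} for part (1), and the fact that the rational character-value moduli in \eqref{eqn-chiDjsqsim} are algebraic integers (hence integers) for parts (2) and (3), with the same coprimality manipulations for the final claim. Your explicit note that $\gcd(\ac^2,\bc^2-\ac^2)=1$ is needed to pass from $(\bc^2-\ac^2)\mid 4\ac^2\la$ to $(\bc^2-\ac^2)\mid 4\la$ is a detail the paper leaves implicit, but it is the same argument.
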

\begin{proof}
\mbox{}
\begin{enumerate}[(1)]
\item
Since $\lpc$ is an integer, by \eqref{eqn-solsfresim} we have $2\bc \mid \bc m - \ac(m-2)$. Therefore $\bc \mid \ac(m-2)$, and since $\gcd(\ac,\bc)=1$ we have $\bc \mid m-2$.

\item
$|\chi(D_j)|^2$ is an algebraic integer, and by \eqref{eqn-chiDjsqsim}
also takes both the rational values $\frac{(\bc+\ac)\la}{\bc-\ac}$ and $\frac{(\bc-\ac)\la}{\bc+\ac}$ as $j$ ranges over $\{1,2,\dots,m\}$. Therefore
$\frac{(\bc+\ac)\la}{\bc-\ac}$ and $\frac{(\bc-\ac)\la}{\bc+\ac}$ are both integers.

\item
$|\chi(D)|^2$ is an algebraic integer, and by \eqref{eqn-chiDjsqsim} is also the rational number $\frac{4\ac^2\la}{\bc^2-\ac^2}$. Therefore $\frac{4\ac^2\la}{\bc^2-\ac^2}$ is an integer, which implies $(\bc^2-\ac^2) \mid 4\la$.
If $\bc+\ac$ is odd, then $\gcd(\bc-\ac,\bc+\ac) = \gcd(2\bc,\bc+\ac) = \gcd(\bc,\bc+\ac)=1$ so that from part (2) we obtain $(\bc-\ac) \mid \la$ and $(\bc+\ac) \mid \la$ and therefore $(\bc^2-\ac^2) \mid \la$.

\end{enumerate}
\end{proof}

Using Lemma~\ref{lem-abpair}, we recover the result of Proposition~\ref{prop-nonexistence}~(1) as Corollary~\ref{cor-m34}, and obtain new restrictions for $m \in \{5,6\}$ as Corollary~\ref{cor-scvp}.

\begin{corollary}\label{cor-m34}
A nontrivial $(v,m,k,\la)$-SEDF does not exist for $m \in \{3,4\}$.
\end{corollary}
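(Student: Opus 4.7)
The strategy is to combine Lemma~\ref{lem-GNnonempty} (which guarantees the existence of at least one character $\chi \in \wh{G}^N$) with the divisibility constraints of Lemma~\ref{lem-abpair}~(1) on the integers $\ac, \bc$ satisfying $\bc > \ac > 0$ and $\gcd(\ac,\bc) = 1$. Since $\wh{G}^N$ is non-empty, we may fix such a character $\chi$ and the associated pair $(\ac,\bc)$; it then suffices to show that no admissible pair exists when $m \in \{3,4\}$.

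For $m=3$, Lemma~\ref{lem-abpair}~(1) yields $\bc \mid m-2 = 1$, so $\bc = 1$. But then $\bc > \ac > 0$ has no integer solution, a contradiction.

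For $m=4$, Lemma~\ref{lem-abpair}~(1) yields $\bc \mid 2$, so $\bc \in \{1,2\}$. The case $\bc = 1$ is ruled out exactly as in the $m=3$ case. If $\bc = 2$, the inequalities $\bc > \ac > 0$ together with $\gcd(\ac,\bc)=1$ force $\ac = 1$. I would then invoke the stronger divisibility $2\bc \mid \bc m - \ac(m-2)$ from Lemma~\ref{lem-abpair}~(1), which becomes $4 \mid 4\cdot 2 - 1\cdot 2 = 6$, a contradiction. (Equivalently, one can just note that \eqref{eqn-solsfresim} gives $\lpc = 2 - \tfrac{1}{2} = \tfrac{3}{2}$, which is not an integer.)

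There is no real obstacle here: once Lemma~\ref{lem-abpair}~(1) and Lemma~\ref{lem-GNnonempty} are in hand, the corollary reduces to a short case analysis over the few small values of $\bc$ permitted by $\bc \mid m-2$. The only subtlety is remembering to apply Lemma~\ref{lem-GNnonempty} at the start, so that the existence of a pair $(\ac,\bc)$ is actually forced by the SEDF.
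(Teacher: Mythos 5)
Your proof is correct and follows exactly the route the paper intends: the corollary is stated as an immediate consequence of Lemma~\ref{lem-abpair}~(1) (with Lemma~\ref{lem-GNnonempty} guaranteeing that some $\chi \in \wh{G}^N$, and hence a pair $(\ac,\bc)$ with $\bc > \ac > 0$, actually exists), and your case analysis $\bc \mid m-2$ for $m=3,4$, with $4 \nmid 6$ (equivalently $\lpc = \tfrac{3}{2}$) killing the case $(\ac,\bc)=(1,2)$, is precisely that argument.
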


\begin{corollary}\label{cor-scvp}
Let $\ac,\bc$ be defined as in \eqref{eqn-bcac}.
\begin{enumerate}[(1)]
\item If there exists a nontrivial $(v,5,k,\la)$-SEDF in a group $G$, then $(\ac,\bc)=(1,3)$ and $2 \mid \la$
for each $\chi \in \wh{G}^N$.
\item If there exists a nontrivial $(v,6,k,\la)$-SEDF in a group $G$, then $(\ac,\bc)=(1,2)$ and $3 \mid \la$
for each $\chi \in \wh{G}^N$.
\end{enumerate}
\end{corollary}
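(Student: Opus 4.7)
The plan is to apply Lemma~\ref{lem-abpair} in a direct case analysis for $m=5$ and $m=6$, exploiting the fact that $\bc \mid m-2$ leaves only a handful of candidate pairs $(\ac,\bc)$ to test, each of which can be accepted or rejected using the remaining divisibility conditions.

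For part~(1), I would first invoke Lemma~\ref{lem-abpair}(1) to obtain $\bc \mid 3$, and combine this with the constraint $\bc > \ac > 0$ from \eqref{eqn-bcac} to force $\bc = 3$. The coprimality $\gcd(\ac,\bc) = 1$ together with $0 < \ac < \bc$ then gives $\ac \in \{1,2\}$. I would eliminate $\ac = 2$ by the divisibility $2\bc \mid \bc m - \ac(m-2)$ in Lemma~\ref{lem-abpair}(1): for $(\ac,\bc,m)=(2,3,5)$ this requires $6 \mid 9$, which fails. Hence $(\ac,\bc) = (1,3)$. To get $2 \mid \la$, I would apply Lemma~\ref{lem-abpair}(3), which yields $(\bc^2 - \ac^2) = 8$ divides $4\la$, i.e.\ $2 \mid \la$.

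For part~(2), the same strategy applies. Lemma~\ref{lem-abpair}(1) gives $\bc \mid 4$, so $\bc \in \{2,4\}$. If $\bc = 4$, coprimality leaves $\ac \in \{1,3\}$, and in each subcase the condition $2\bc \mid \bc m - \ac(m-2)$ becomes $8 \mid 24 - 4\ac$, which fails for both values of $\ac$. Hence $\bc = 2$, forcing $\ac = 1$. Since $\bc + \ac = 3$ is odd, the stronger half of Lemma~\ref{lem-abpair}(3) gives $(\bc^2 - \ac^2) = 3$ divides $\la$.

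No step is really a serious obstacle: the proof is a finite and small case-check grounded entirely in Lemma~\ref{lem-abpair}. The only point requiring a moment's care is remembering to use the parity-sensitive form of Lemma~\ref{lem-abpair}(3) in part~(2), where $\bc + \ac$ is odd and so we obtain $3 \mid \la$ rather than the weaker $3 \mid 4\la$ that would follow from the general version.
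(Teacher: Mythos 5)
Your proposal is correct and follows exactly the route the paper intends: the paper presents Corollary~\ref{cor-scvp} as an immediate consequence of Lemma~\ref{lem-abpair}, and your finite case-check (using $\bc \mid m-2$, the condition $2\bc \mid \bc m - \ac(m-2)$, and the two forms of part~(3)) is precisely that argument, with all the arithmetic verified correctly, including the parity-sensitive use of $(\bc^2-\ac^2)\mid\la$ when $\bc+\ac$ is odd.
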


Motivated by Corollary~\ref{cor-scvp}, we say that a nontrivial $(v,m,k,\la)$-SEDF with $m>2$ for which $(\ac,\bc)$ takes a constant value $(a,b)$ for all $\chi \in \wh{G}^N$ has the \emph{simple character value property} with respect to $(a,b)$.
In the following subsection we obtain restrictions on SEDFs having this property. In particular, for $m=5$ and for $m=6$ we obtain asymptotic nonexistence results for a family of SEDFs, each of which must have this property with respect to a fixed $(a,b)$ by Corollary~\ref{cor-scvp}.

\subsection{The simple character value property}
\label{subsec-scvp}
As above, suppose that $\{D_1,D_2,\dots,D_m\}$ is a nontrivial $(v,m,k,\la)$-SEDF in a group $G$ with $m>2$, and write $D = \bigcup_{i=1}^m D_i$. Suppose further that $\{D_1,D_2,\dots,D_m\}$ has the simple character value property with respect to $(a,b)$. Then by \eqref{eqn-chiDjsim}, we may partition $\wh{G}^N$ (with respect to $D_1$) into the disjoint union of the sets
\begin{align}
\wh{G}^{+} &=\{\chi \in \wh{G}^N \mid \chi(D_1)=\frac{a+b}{2a}\chi(D)\}, \label{eqn-G+defn} \\
\wh{G}^{-} &=\{\chi \in \wh{G}^N \mid \chi(D_1)=\frac{a-b}{2a}\chi(D)\}, \label{eqn-G-defn}
\end{align}
and from the definition \eqref{eqn-G0defn}, $\wh{G}$ is the disjoint union $\{\chi_0\}\cup\wh{G}^0\cup\wh{G}^+\cup\wh{G}^-$.
By \eqref{eqn-chiDlaiff} and \eqref{eqn-chiDjsqsim}, we then obtain the character values in Table~\ref{tab-charvalues}.

\begin{table}[htbp]
  \centering
  $\begin{array}{|c|c|c|c|}												 \hline
\hspace{5em}		& \hspace{5em}			& \hspace{5em}			& \hspace{5em}		\\ [-0.5ex]
\chi \in \wh{G} 	& |\chi(D)|^2			& \chi(D_1) 			& |\chi(D_1)|^2 	\\ [2ex]\hline
			&				&				&			\\ [-0.5ex]
\chi = \chi_0		& k^2m^2			& k				& k^2 			\\ [3ex]
\chi \in \wh{G}^0	& 0				&  				& \lambda 		\\ [4ex]
\chi \in \wh{G}^{+}	& \dfrac{4a^2\la}{b^2-a^2}	& \dfrac{a+b}{2a}\chi(D)	& \dfrac{(b+a)\la}{b-a}	\\ [4ex]
\chi \in \wh{G}^{-}	& \dfrac{4a^2\la}{b^2-a^2}	& \dfrac{a-b}{2a}\chi(D)	& \dfrac{(b-a)\la}{b+a}	\\ [4ex] \hline
  \end{array}$
  \caption{Character sums for an SEDF with $m>2$, having the simple character value property with respect to $(a,b)$}
  \label{tab-charvalues}
\end{table}

We now determine the size of the sets $\wh{G}^0$, $\wh{G}^+$, $\wh{G}^-$.

\begin{theorem}\label{thm-simchar}
Suppose $\{D_1,D_2,\ldots,D_m\}$ is a nontrivial $(v,m,k,\la)$-SEDF in a group $G$ with $m>2$, having the simple character value property with respect to $(a,b)$. Then the sizes of the sets $\wh{G}^0$, $\wh{G}^+$, $\wh{G}^-$ (defined as in \eqref{eqn-G0defn}, \eqref{eqn-G+defn}, \eqref{eqn-G-defn} with reference to the sets $D_1$ and $D=\bigcup_{i=1}^m D_i$) are
\begin{align}
|\wh{G}^0| &=(v-1)\left(1-\frac{(b^2-a^2)(v-km)m}{4a^2k(m-1)}\right), \label{eqn-whG0} \\
|\wh{G}^+| &=\frac{(v-1)(v-km)(b^2-a^2)((b-a)m+2a)}{8a^2bk(m-1)}, \nonumber \\
|\wh{G}^-| &=\frac{(v-1)(v-km)(b^2-a^2)((b+a)m-2a)}{8a^2bk(m-1)}, \nonumber
\end{align}
and each of $|\wh{G}^0|, |\wh{G}^+|, |\wh{G}^-|$ is a non-negative integer and $|\wh{G}^+|+|\wh{G}^-| > 0$.
\end{theorem}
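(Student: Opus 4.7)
The plan is to set up three linear equations in the three unknowns $|\wh{G}^0|$, $|\wh{G}^+|$, $|\wh{G}^-|$ and then solve. The first equation is immediate from the disjoint decomposition $\wh{G}=\{\chi_0\}\cup\wh{G}^0\cup\wh{G}^+\cup\wh{G}^-$, which gives
\[
|\wh{G}^0|+|\wh{G}^+|+|\wh{G}^-|=v-1.
\]

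The remaining two equations are Parseval-type identities obtained by applying the Fourier inversion formula (Proposition~\ref{prop-fourier}) to the group ring elements $DD^{(-1)}$ and $D_1D^{(-1)}$ and reading off the coefficient of the identity. Since $\chi(DD^{(-1)})=|\chi(D)|^2$ and the coefficient of $1$ in $DD^{(-1)}$ is $|D|=km$, I get $\sum_{\chi\in\wh{G}}|\chi(D)|^2=vkm$. Similarly, because $D_1\subseteq D$, the coefficient of $1$ in $D_1D^{(-1)}$ equals $|D_1|=k$, yielding $\sum_{\chi\in\wh{G}}\chi(D_1)\,\ol{\chi(D)}=vk$. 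I then split each sum along the four-part partition of $\wh{G}$; terms indexed by $\chi\in\wh{G}^0$ vanish since $\chi(D)=0$, while terms indexed by $\wh{G}^+$ and by $\wh{G}^-$ are each constant within their part by the simple character value property. Reading the values from Table~\ref{tab-charvalues} produces
\begin{align*}
k^2m^2 + \frac{4a^2\la}{b^2-a^2}\bigl(|\wh{G}^+|+|\wh{G}^-|\bigr) &= vkm,\\
k^2m + \frac{2a\la}{b-a}|\wh{G}^+| - \frac{2a\la}{b+a}|\wh{G}^-| &= vk.
\end{align*}

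To finish, I eliminate $\la$ using the counting relation $\la(v-1)=k^2(m-1)$ from \eqref{eqn-counting} and solve this $2\times 2$ system together with the partition equation. The first identity gives $|\wh{G}^+|+|\wh{G}^-|$ directly, from which $|\wh{G}^0|=v-1-(|\wh{G}^+|+|\wh{G}^-|)$ collapses to the expression claimed for $|\wh{G}^0|$; the second identity, combined with the value of $|\wh{G}^+|+|\wh{G}^-|$, then separates $|\wh{G}^+|$ from $|\wh{G}^-|$ and delivers their claimed factored forms. Nonnegativity and integrality of the three cardinalities are automatic because each is the size of a set, and the strict inequality $|\wh{G}^+|+|\wh{G}^-|>0$ is precisely Lemma~\ref{lem-GNnonempty} applied to $\wh{G}^N=\wh{G}^+\cup\wh{G}^-$. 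I expect no conceptual obstacle; the only mild difficulty is the algebraic bookkeeping needed to massage the solutions of the linear system into the neatly factored form displayed in the statement.
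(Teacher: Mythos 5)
Your proposal is correct and follows essentially the same route as the paper: both derive the partition equation $|\wh{G}^0|+|\wh{G}^+|+|\wh{G}^-|=v-1$ and two Fourier-inversion (Parseval-type) identities read off from the identity coefficient using the character values of Table~\ref{tab-charvalues}, then solve the resulting linear system, with nonnegativity and integrality immediate and $|\wh{G}^+|+|\wh{G}^-|>0$ from Lemma~\ref{lem-GNnonempty}. The only (inessential) difference is that your second identity comes from $D_1D^{(-1)}$ (identity coefficient $k$, giving $k^2m+\frac{2a\la}{b-a}|\wh{G}^+|-\frac{2a\la}{b+a}|\wh{G}^-|=vk$) whereas the paper uses $D_1D_1^{(-1)}$ (giving $k^2+|\wh{G}^0|\la+\frac{(b+a)\la}{b-a}|\wh{G}^+|+\frac{(b-a)\la}{b+a}|\wh{G}^-|=vk$); the two systems are equivalent and yield the same formulas.
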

\begin{proof}
Each of $|\wh{G}^0|, |\wh{G}^+|, |\wh{G}^-|$ is a non-negative integer by definition,  and $|\wh{G}^+|+|\wh{G}^-| = |\wh{G}^N| > 0$ by Lemma~\ref{lem-GNnonempty}.
Write $DD^{(-1)}=\sum_{g \in G}c_gg \in \Z[G]$. From Proposition~\ref{prop-fourier},
\[
c_1=\frac{1}{v}\sum_{\chi \in \wh{G}} |\chi(D)|^2.
\]
The left side $c_1 = |D| = km$ is the coefficient of the identity in the expression $DD^{(-1)}$, and the right side can be evaluated using Table~\ref{tab-charvalues} to give
\[
km=\frac{1}{v}\left(k^2m^2+\big(v-1-|\wh{G}^0|\big)\frac{4a^2\la}{b^2-a^2}\right).
\]
Substitute for $\la$ from the counting relation \eqref{eqn-counting} to obtain the required expression for~$|\wh{G}^0|$.

Similarly, write $D_1D_1^{(-1)}=\sum_{g \in G}d_gg \in \Z[G]$ and use Proposition~\ref{prop-fourier} and Table~\ref{tab-charvalues} to give
\[
k =\frac{1}{v}\Big(k^2+|\wh{G}^0|\,\la+|\wh{G}^+|\,\frac{(b+a)\la}{b-a}+|\wh{G}^-|\,\frac{(b-a)\la}{b+a}\Big).
\]
We now obtain the required expressions for $|\wh{G}^+|$ and $|\wh{G}^-|$ using \eqref{eqn-whG0} and the counting condition $|\wh{G}^0|+|\wh{G}^+|+|\wh{G}^-|=v-1$.
\end{proof}

We obtain the following asymptotic nonexistence result from Theorem~\ref{thm-simchar}.

\begin{theorem}\label{thm-asymp-scvp}
Let $m$, $\la$, $a$, $b$ be fixed positive integers, where $m>2$ and $b > a$ and $\gcd(a,b)=1$. Then for all sufficiently large~$k$, there does not exist a nontrivial $(v,m,k,\la)$-SEDF having the simple character value property with respect to~$(a,b)$.
\end{theorem}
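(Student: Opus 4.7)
My strategy is to extract a contradiction directly from the non-negativity of the integer $|\wh{G}^0|$ supplied by Theorem~\ref{thm-simchar}. Since $m$, $\lambda$, $a$, $b$ are held fixed while $k$ varies, it suffices to show that the formula for $|\wh{G}^0|$, regarded as a function of $k$, becomes negative once $k$ is large enough. Unlike \mbox{$|\wh{G}^{\pm}|$}, the expression for $|\wh{G}^0|$ has an indefinite sign because of the factor $1-\tfrac{(b^2-a^2)(v-km)m}{4a^2k(m-1)}$, and that is precisely the source of the bound.

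First I would use the counting relation \eqref{eqn-counting} to eliminate $v$, writing $v-1=\tfrac{k^2(m-1)}{\la}$ and hence
\[
v-km=1+\frac{k^2(m-1)}{\la}-km.
\]
Substituting into the formula \eqref{eqn-whG0} and clearing the positive denominator $4a^2k(m-1)$, the condition $|\wh{G}^0|\ge 0$ becomes equivalent to the polynomial inequality
\[
\frac{(b^2-a^2)m(m-1)}{\la}\,k^2 \;-\; \Bigl((b^2-a^2)m^2+4a^2(m-1)\Bigr)k \;+\; (b^2-a^2)m \;\le\; 0.
\]
This is a quadratic in $k$ whose coefficients depend only on the fixed parameters $m,\la,a,b$.

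Next I would observe that, because $m>2$ and $b>a\ge 1$ with $\gcd(a,b)=1$, the leading coefficient $\tfrac{(b^2-a^2)m(m-1)}{\la}$ is a \emph{strictly positive} constant. Consequently the quadratic opens upward and eventually exceeds any bound; in particular it is positive for all $k$ larger than some threshold $k_0=k_0(m,\la,a,b)$. For such $k$ the inequality fails, contradicting $|\wh{G}^0|\ge 0$. Hence no nontrivial $(v,m,k,\la)$-SEDF with the simple character value property relative to $(a,b)$ can exist for $k>k_0$.

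The argument is essentially a one-variable asymptotic calculation, so I do not expect a serious obstacle; the only conceptual point is recognizing that the single inequality $|\wh{G}^0|\ge 0$ from Theorem~\ref{thm-simchar} is already strong enough, and no appeal to the finer structure of $\wh{G}^{+}$ and $\wh{G}^{-}$ or to algebraic number theory is required. If a reader wanted an explicit threshold, one could extract it from the quadratic formula, but for the asymptotic statement the dominance of the $k^2$ term over the $k$ term is all that is needed.
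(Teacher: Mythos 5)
Your proposal is correct and follows essentially the same route as the paper: both use only the non-negativity of $|\wh{G}^0|$ from Theorem~\ref{thm-simchar} together with the counting relation \eqref{eqn-counting} to show the resulting inequality in $k$ fails for large $k$. The paper phrases it as a bound on $v/k$ versus the growth $v \sim k^2(m-1)/\la$, while you write it as an upward-opening quadratic in $k$; these are the same asymptotic argument.
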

\begin{proof}
Apply the condition $|\wh{G}^0| \ge 0$ to \eqref{eqn-whG0}, and rearrange to give the inequality
$$
\frac{v}{k} \le m + \frac{4a^2(m-1)}{m(b^2-a^2)}.
$$
Since $m$ and $\la$ are fixed, the counting relation \eqref{eqn-counting} shows that $v$ grows like $k^2$ as $k$ increases. Therefore for all sufficiently large $k$, the inequality in $v/k$ does not hold.
\end{proof}

As a consequence of Corollary~\ref{cor-scvp} and Theorem~\ref{thm-asymp-scvp}, we obtain the following asymptotic nonexistence result for $m \in \{5,6\}$.

\begin{corollary}\label{cor-asymp-m56}
Let $\la$ be a fixed positive integer.
Then for all sufficiently large~$k$, there does not exist a nontrivial $(v,5,k,\la)$-SEDF and there does not exist a nontrivial $(v,6,k,\la)$-SEDF.
\end{corollary}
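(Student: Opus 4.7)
The plan is to derive Corollary \ref{cor-asymp-m56} as a direct combination of Corollary \ref{cor-scvp} and Theorem \ref{thm-asymp-scvp}, specializing the simple character value parameters $(a,b)$ to the unique values that Corollary \ref{cor-scvp} forces in each of the two cases.

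First I would fix $\la$ and handle $m=5$. By Corollary \ref{cor-scvp}~(1), any nontrivial $(v,5,k,\la)$-SEDF in a group $G$ satisfies $(\ac,\bc)=(1,3)$ for every $\chi \in \wh{G}^N$. In particular such an SEDF automatically has the simple character value property with respect to $(a,b)=(1,3)$, so Theorem \ref{thm-asymp-scvp} applies with the fixed tuple $(m,\la,a,b)=(5,\la,1,3)$. This gives nonexistence for all sufficiently large~$k$.

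Next I would handle $m=6$ identically: by Corollary \ref{cor-scvp}~(2), any nontrivial $(v,6,k,\la)$-SEDF has the simple character value property with respect to $(a,b)=(1,2)$, so Theorem \ref{thm-asymp-scvp} applied to the fixed tuple $(m,\la,a,b)=(6,\la,1,2)$ yields nonexistence for all sufficiently large~$k$. Taking the maximum of the two threshold values for $k$ then gives a single $k_0=k_0(\la)$ beyond which neither a nontrivial $(v,5,k,\la)$-SEDF nor a nontrivial $(v,6,k,\la)$-SEDF exists.

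There is no real obstacle here; the work was done in establishing Corollary \ref{cor-scvp} (which pins down $(a,b)$ uniquely when $m\in\{5,6\}$ via the divisibility $\bc\mid m-2$ of Lemma \ref{lem-abpair}~(1)) and Theorem \ref{thm-asymp-scvp} (whose quadratic growth of $v$ in $k$ from the counting relation \eqref{eqn-counting} forces violation of the bound $v/k \le m + 4a^2(m-1)/(m(b^2-a^2))$ coming from $|\wh{G}^0|\ge 0$). The only delicate point worth flagging is that Corollary \ref{cor-scvp} guarantees the \emph{constancy} of $(\ac,\bc)$ across all $\chi \in \wh{G}^N$, which is exactly what the definition of the simple character value property demands; without this uniform value, Theorem \ref{thm-asymp-scvp} could not be invoked.
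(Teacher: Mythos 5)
Your proposal is correct and follows exactly the paper's intended argument: Corollary~\ref{cor-scvp} forces the simple character value property with respect to $(1,3)$ for $m=5$ and $(1,2)$ for $m=6$, and Theorem~\ref{thm-asymp-scvp} then gives the asymptotic nonexistence for each fixed $\la$. The paper presents the corollary as an immediate consequence of these two results, just as you do.
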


We can obtain results similar to Corollary~\ref{cor-asymp-m56} for values of $m$ greater than~$6$. For example, suppose there exists a nontrivial $(v,7,k,\la)$-SEDF. From Lemma~\ref{lem-abpair} we find that $\la \bmod 12 \in \{0,4,6,8\}$, and that the SEDF has the simple character value property with respect to $(1,5)$ if $\la \bmod 12 = 6$ and with respect to $(3,5)$ if $\la \bmod 12 \in \{4,8\}$. Therefore for fixed $\la$ for which $\la \bmod 12 \ne 0$, for all sufficiently large $k$ there does not exist a nontrivial $(v,7,k,\la)$-SEDF. Likewise, for fixed $\la$ for which $\la \bmod 10 \ne 0$, for all sufficiently large $k$ there does not exist a nontrivial $(v,8,k,\la)$-SEDF.

We derive further divisibility conditions on the SEDF parameters in Theorem~\ref{thm-char-scvp} below. We first require two number-theoretic lemmas.

\begin{lemma}{\rm \cite[Lemma 2.3]{Hira}}\label{lem-rootofunity}
Let $p$ be a prime and let $e$ be a positive integer. Let $\sig=\sum_{i=0}^{p^e-1} c_i\ze_{p^e}^{i}$, where each $c_i \in \Z$. Then $\sig=0$ if and only if $c_i=c_j$ for all $i$ and $j$ satisfying $i \equiv j \pmod{p^{e-1}}$.
\end{lemma}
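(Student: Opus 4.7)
The plan is to identify the kernel of the $\mathbb{Z}$-linear map $(c_0,\dots,c_{p^e-1}) \mapsto \sigma$ using the $p^e$-th cyclotomic polynomial. Recall that $\zeta_{p^e}$ has minimal polynomial $\Phi_{p^e}(x) = 1 + x^{p^{e-1}} + x^{2p^{e-1}} + \dots + x^{(p-1)p^{e-1}}$ of degree $\phi(p^e)=p^{e-1}(p-1)$ over $\mathbb{Q}$.

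First I would reindex: each $i$ with $0 \le i \le p^e - 1$ has a unique expression $i = a p^{e-1} + r$ with $0 \le a \le p-1$ and $0 \le r \le p^{e-1} - 1$, under which $i \equiv j \pmod{p^{e-1}}$ is equivalent to equality of the $r$-parts. Writing $\omega = \zeta_{p^e}^{p^{e-1}}$, a primitive $p$-th root of unity, the sum reorganizes as
$$
\sigma = \sum_{r=0}^{p^{e-1}-1} \zeta_{p^e}^{r} \Big( \sum_{a=0}^{p-1} c_{ap^{e-1}+r}\, \omega^{a} \Big).
$$
The relation $\Phi_{p^e}(\zeta_{p^e}) = 0$ is exactly $1 + \omega + \dots + \omega^{p-1} = 0$. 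This gives the easy direction immediately: if $c_{ap^{e-1}+r}$ is independent of $a$ for each fixed $r$, each inner sum vanishes and $\sigma = 0$.

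For the converse, the key step is to establish that $\mathcal{B} = \{\, \zeta_{p^e}^{r}\omega^{a} : 0 \le r \le p^{e-1}-1,\ 0 \le a \le p-2 \,\}$ is a $\mathbb{Z}$-basis of $\mathbb{Z}[\zeta_{p^e}]$. Its cardinality $p^{e-1}(p-1)$ equals $\phi(p^e)$, matching the $\mathbb{Z}$-rank; spanning follows by using $\omega^{p-1} = -(1+\omega+\dots+\omega^{p-2})$ to express the missing monomials in terms of $\mathcal{B}$, and linear independence follows by comparing cardinality with the standard power basis. Given $\sigma=0$, I would substitute $\omega^{p-1} = -(1+\omega+\dots+\omega^{p-2})$ in each inner sum to write $\sigma$ in the basis $\mathcal{B}$, and then read off the coefficients: for each $r$ and each $a \in \{0,1,\dots,p-2\}$, the coefficient of $\zeta_{p^e}^{r}\omega^{a}$ is $c_{ap^{e-1}+r} - c_{(p-1)p^{e-1}+r}$. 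Setting these to zero yields $c_{ap^{e-1}+r} = c_{(p-1)p^{e-1}+r}$ for all $a$, which is precisely the asserted equality across each residue class modulo $p^{e-1}$.

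The only substantive step is verifying that $\mathcal{B}$ is a $\mathbb{Z}$-basis; the two directions of the equivalence then follow mechanically from the cyclotomic relation and linear independence. A natural alternative would be to identify the kernel abstractly as the $\mathbb{Z}$-span of the $p^{e-1}$ translates $\zeta_{p^e}^{r}\Phi_{p^e}(\zeta_{p^e}) = \sum_{a=0}^{p-1} \zeta_{p^e}^{ap^{e-1}+r}$ for $0 \le r \le p^{e-1}-1$, which are visibly $\mathbb{Z}$-linearly independent and of the correct total rank $p^e - \phi(p^e) = p^{e-1}$; membership of a tuple $(c_i)$ in this span is exactly the condition stated.
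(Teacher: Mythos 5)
Your proof is correct. Note that the paper does not prove this lemma at all: it is quoted from Hiramine \cite[Lemma 2.3]{Hira}, so there is no in-paper argument to compare against; your write-up supplies the standard proof. Two small remarks. First, your set $\mathcal{B}=\{\zeta_{p^e}^{r}\omega^{a}: 0\le r\le p^{e-1}-1,\ 0\le a\le p-2\}$ is, element by element, exactly the power basis $\{1,\zeta_{p^e},\dots,\zeta_{p^e}^{\phi(p^e)-1}\}$, since $\zeta_{p^e}^{r}\omega^{a}=\zeta_{p^e}^{r+ap^{e-1}}$ and the exponents $r+ap^{e-1}$ run bijectively through $0,1,\dots,(p-1)p^{e-1}-1$. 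So the ``only substantive step'' needs no separate cardinality-and-spanning argument: linear independence over $\Q$ is immediate because $\Phi_{p^e}$, of degree $\phi(p^e)$, is the minimal polynomial of $\zeta_{p^e}$, and in fact $\Q$-linear independence of these $\phi(p^e)$ elements is all the converse direction requires (you never need that $\mathcal{B}$ is a $\Z$-basis of the full ring of integers). Second, your phrasing ``linear independence follows by comparing cardinality with the standard power basis'' is circular as stated; replace it by the observation above. With that adjustment the argument is complete: rewriting each inner sum via $\omega^{p-1}=-(1+\omega+\cdots+\omega^{p-2})$ expresses $\sig$ in the basis with coefficients $c_{ap^{e-1}+r}-c_{(p-1)p^{e-1}+r}$, and $\sig=0$ forces all of them to vanish, which is precisely the condition $c_i=c_j$ for $i\equiv j \pmod{p^{e-1}}$; the easy direction follows from $1+\omega+\cdots+\omega^{p-1}=0$ exactly as you say.
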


\begin{lemma}\label{lem-chavalres}
Let $p$ be a prime and $H$ be a $p$-group. Let $E = \sum_{h\in H}c_h h \in \Z[H]$, where each $c_h \ge 0$ and $\sum_{h\in H}c_h = u$. Suppose there is an integer $\ell$ and a character $\chi \in \wh{H}$ for which $|\chi(E)|^2=\ell$. Then $u^2+(p-1)\ell = pr$ for some integer $r \ge u$.
\end{lemma}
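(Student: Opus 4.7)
My plan is to reduce to a cyclic $p$-group quotient and then apply Ramanujan sums. Let $N = \ker\chi$, so $H/N$ is cyclic of order $p^a$ for some $a \ge 0$; the principal case $a=0$ gives $\ell = u^2$ and $r = u^2 \ge u$ trivially, so I assume $a \ge 1$. Writing the natural projection as $\tau(E) = \sum_{i=0}^{p^a-1} d_i g^i \in \Z[H/N]$, with each $d_i$ a non-negative integer and $\sum_i d_i = u$, and letting $\tilde\chi$ be the faithful character of $H/N$ corresponding to $\chi$, I have $|\tilde\chi(\tau(E))|^2 = \ell$.

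Integrality of $r = (u^2 + (p-1)\ell)/p$ will come from the congruence $\ell \equiv u^2 \pmod p$, which I would prove by reducing $\tilde\chi(\tau(E)) = \sum_i d_i \zeta_{p^a}^i$ modulo the totally ramified prime $(1 - \zeta_{p^a})$ of $\Z[\zeta_{p^a}]$. Each $\zeta_{p^a}^i \equiv 1$ mod this ideal, so $\tilde\chi(\tau(E)) \equiv u$; since complex conjugation fixes the ideal, $\overline{\tilde\chi(\tau(E))} \equiv u$ as well, and multiplying gives $\ell \equiv u^2 \pmod p$.

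The main step is the inequality $r \ge u$. I plan to sum $|\chi^j(E)|^2 = \ell$ over the $\phi(p^a)$ Galois conjugates of $\chi$ (indices $j$ with $\gcd(j, p^a) = 1$). Expansion yields $\phi(p^a)\,\ell = \sum_{i,k} d_i d_k\, c_{p^a}(i-k)$, where $c_{p^a}$ denotes the Ramanujan sum. On prime-power modulus this sum takes only three values---$\phi(p^a)$, $-p^{a-1}$, or $0$---according to the $p$-adic valuation of its argument. Grouping the $d_i$ by residue modulo $p^{a-1}$ through $e_s := \sum_{j=0}^{p-1} d_{s + j p^{a-1}}$ (so $\sum_s e_s = u$), the identity simplifies to
\[
(p-1)\ell = p \sum_i d_i^2 - \sum_s e_s^2,
\qquad \text{so} \qquad
r = \sum_i d_i^2 + \frac{u^2 - \sum_s e_s^2}{p}.
\]
The bound $r \ge u$ then follows from two elementary facts: $\sum_i d_i^2 \ge \sum_i d_i = u$ because each $d_i$ is a non-negative integer, and $u^2 - \sum_s e_s^2 = 2 \sum_{s < s'} e_s e_{s'} \ge 0$ because $u = \sum_s e_s$ with $e_s \ge 0$.

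The main obstacle I anticipate is finding the correct combinatorial identity. A naive Parseval argument on $H/N$ does not suffice once $a > 1$, because only the primitive characters of $H/N$ (equivalently, the Galois orbit of $\chi$) are known to take the value $\ell$. Restricting the sum to that orbit and evaluating the resulting Ramanujan sums explicitly on prime-power modulus is the step that makes everything go through; once the formula for $r$ is in hand, the non-negativity of the $d_i$ closes the gap to $u$.
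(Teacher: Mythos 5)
Your proof is correct, but it takes a genuinely different route from the paper's. The paper works directly in $\Z[\zeta_{p^e}]$ with $p^e=\exp(H)$: it expands $\ell=|\chi(E)|^2=\sum_{i=0}^{p^e-1} d_i\zeta_{p^e}^i$ where $d_i=\sum_{h,j:\,\chi(h)\overline{\chi(j)}=\zeta_{p^e}^i}c_hc_j$ (so those $d_i$ are non-negative integers summing to $u^2$), then applies the structure of vanishing integer combinations of $p^e$-th roots of unity (Lemma~\ref{lem-rootofunity}) to $\sum_i d_i\zeta_{p^e}^i-\ell=0$; this gives $u^2+(p-1)\ell=p\sum_{i<p^{e-1}}d_i$ in one stroke, and the bound follows from $r\ge d_0=\sum_{\chi(h)=\chi(j)}c_hc_j\ge\sum_h c_h^2\ge u$. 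You instead pass to the cyclic quotient $H/\ker\chi$, prove the congruence $\ell\equiv u^2\pmod p$ via the totally ramified prime $(1-\zeta_{p^a})$, and get the inequality by averaging over the Galois orbit of $\chi$ and evaluating Ramanujan sums, arriving at the identity $(p-1)\ell=p\sum_i d_i^2-\sum_s e_s^2$; your two elementary estimates then close the argument, and the edge cases ($a=0$, $a=1$) are handled correctly. The one point deserving an explicit line is why $|\chi^j(E)|^2=\ell$ for every $j$ coprime to $p$: this holds because complex conjugation lies in the abelian group $\mathrm{Gal}(\Q(\zeta_{p^a})/\Q)$ and so commutes with each automorphism $\sigma_j$, giving $|\sigma_j(\chi(E))|^2=\sigma_j\big(|\chi(E)|^2\big)=\ell$. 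As for what each approach buys: the paper's argument is shorter and self-contained modulo the single quoted lemma, and produces $r$ directly as a sum of non-negative coefficients; yours needs two separate tools (ramification for integrality, Ramanujan sums for the bound) but yields a more quantitative identity for $(p-1)\ell$ in terms of the projected coefficients, from which the inequality and its slack are transparent.
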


\begin{proof}
Let $p^e = \exp(H)$. Then
\begin{align}
\ell
 &= |\chi(E)|^2 \nonumber \\
 &= \sum_{h, j \in H} c_h c_j \chi(h) \ol{\chi(j)} \nonumber \\
 &= \sum_{i=0}^{p^e-1}d_i\zpe^i, \label{eqn-ellsum}
\end{align}
where $d_i = \sum_{h,j \in H: \chi(h) \ol{\chi(j)} = \zpe^i} c_h c_j$. Each $d_i$ is a non-negative integer, and
\begin{align}
\sum_{i=0}^{p^e-1} d_i
 &= \sum_{h, j \in H} c_h c_j \nonumber \\
 &= u^2. \label{eqn-aisum}
\end{align}
Subtract $\ell$ from both sides of \eqref{eqn-ellsum}, and deduce from Lemma~\ref{lem-rootofunity} that
$$
d_0-\ell=d_{p^{e-1}}=d_{2p^{e-1}}=\cdots=d_{(p-1)p^{e-1}}
$$
and
$$
d_j=d_{p^{e-1}+j}=d_{2p^{e-1}+j}=\cdots=d_{(p-1)p^{e-1}+j}
\quad \mbox{for each $j$ satisfying $1 \le j \le p^{e-1}-1$}.
$$
Substitute into \eqref{eqn-aisum} to obtain
\[
p \sum_{i=0}^{p^{e-1}-1} d_i - (p-1)\ell = u^2,
\]
so that $u^2+(p-1)\ell = pr$ where $r$ is an integer satisfying
\[
r = \sum_{i=0}^{p^{e-1}-1} d_i \ge \, d_0 = \!\!\!\! \sum_{h,j \in H : \chi(h)=\chi(j)}c_hc_j \ge \sum_{h\in H} c_h^2 \ge \sum_{h\in H}c_h =u.
\]
\end{proof}

\begin{theorem}\label{thm-char-scvp}
Suppose $\{D_1,D_2,\ldots,D_m\}$ is a nontrivial $(v,m,k,\la)$-SEDF in a group $G$ with $m>2$, having the simple character value property with respect to $(a,b)$, and let $p$ be a prime divisor of $v$. Then either the following both hold:

\begin{enumerate}
\item[(1a)]
$|G_p|$ divides $km$,
\item[(1b)]
$k^2+(|G_p|-1)\la = |G_p|r_1$ for some integer $r_1 \ge k$.
\end{enumerate}
or the following all hold:
\begin{enumerate}
\item[(2a)]
$k^2m^2+(p-1)\frac{4a^2\la}{b^2-a^2} = pr_2$ for some integer $r_2 \ge km$,
\item[(2b)]
$k^2+(p-1)\frac{(b-a)\la}{b+a} = pr_3$ for some integer $r_3 \ge k$,
\item[(2c)]
$k^2+(p-1)\frac{(b+a)\la}{b-a} = pr_4$ for some integer $r_4$.
\end{enumerate}

\end{theorem}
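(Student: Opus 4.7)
The plan is to split according to whether any nonprincipal character of $G$ coming from the Sylow $p$-subgroup lies in $\wh{G}^N$. Write $G = G_p \times K$ for a complement $K$, and let $\rho : G \to G_p$ be the natural projection; each $\wti\chi \in \wh{G_p}$ lifts to $\chi \in \wh{G}$ by $\chi(g) = \wti\chi(\rho(g))$, so that $\chi(A) = \wti\chi(\rho(A))$ for every $A \in \Z[G]$. Case~(1) of the theorem will correspond to every nonprincipal lifting character lying in $\wh{G}^0$, and Case~(2) to at least one lifting character lying in $\wh{G}^N$.

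In Case~(1), $\wti\chi(\rho(D)) = \chi(D) = 0$ for every nonprincipal $\wti\chi \in \wh{G_p}$. Fourier inversion on $G_p$ forces $\rho(D) = \tfrac{km}{|G_p|}\,G_p$ in $\Z[G_p]$; the coefficients must be non-negative integers, giving (1a). For (1b), write $\rho(D_1) = \sum_{g \in G_p} c_g g$ with $c_g \in \Z_{\ge 0}$ and $\sum_g c_g = k$. By \eqref{eqn-chiDlaiff}, $|\wti\chi(\rho(D_1))|^2 = \la$ for every nonprincipal $\wti\chi \in \wh{G_p}$, so Fourier inversion on $G_p$ applied to the identity coefficient of $\rho(D_1)\rho(D_1)^{(-1)}$ produces
\[
|G_p| \sum_{g \in G_p} c_g^2 \;=\; k^2 + (|G_p|-1)\la.
\]
Setting $r_1 = \sum_g c_g^2 \in \Z$ yields (1b), and $c_g \in \Z_{\ge 0}$ forces $c_g^2 \ge c_g$, whence $r_1 \ge \sum_g c_g = k$.

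In Case~(2), fix a nonprincipal lifting character $\chi \in \wh{G}^N$. By Table~\ref{tab-charvalues} and Lemma~\ref{lem-abpair}(3), $|\chi(D)|^2 = \tfrac{4a^2\la}{b^2-a^2}$ is an integer, so Lemma~\ref{lem-chavalres} applied with $H = G_p$, $E = \rho(D)$, $u = km$, $\ell = |\chi(D)|^2$ gives (2a). By \eqref{eqn-chiDjsim} together with $\lpc,\lmc \ge 1$, there exist indices $j_+, j_- \in \{1,\ldots,m\}$ with $\chi(D_{j_\pm}) = \tfrac{a\pm b}{2a}\chi(D)$, so $|\chi(D_{j_\pm})|^2 = \tfrac{(b\pm a)\la}{b\mp a}$, which are integers by Lemma~\ref{lem-abpair}(2). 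Applying Lemma~\ref{lem-chavalres} to $E = \rho(D_{j_-})$ with $u = k$ gives (2b), and applying it to $E = \rho(D_{j_+})$ with $u = k$ gives (2c) (in fact with $r_4 \ge k$, which is stronger than claimed).

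The conceptual heart is just the dichotomy; each branch then reduces to a one-shot Fourier inversion or direct application of Lemma~\ref{lem-chavalres}. The only mildly subtle point is that (1a)--(1b) are obtained by running Fourier inversion over the full $p$-group $G_p$ (yielding the modulus $|G_p|$), while (2a)--(2c) come from the prime-level statement of Lemma~\ref{lem-chavalres} (yielding the modulus $p$); the integrality of each $|\chi(\cdot)|^2$ entering Lemma~\ref{lem-chavalres} as $\ell$ is supplied by Lemma~\ref{lem-abpair}. I anticipate no serious obstacle beyond this bookkeeping.
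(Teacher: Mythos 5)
Your proposal is correct and follows essentially the same route as the paper: the same dichotomy on whether every nonprincipal character lifted from $\wh{G_p}$ annihilates $\rho(D)$, with Case (1) handled by Fourier inversion on $G_p$ applied to $\rho(D)$ and to the identity coefficient of $\rho(D_1)\rho(D_1)^{(-1)}$, and Case (2) by Lemma~\ref{lem-chavalres} applied to $\rho(D)$ and to two blocks realizing both character values (integrality supplied by Lemma~\ref{lem-abpair}). Your observation that (2c) in fact holds with $r_4 \ge k$ is a small, valid strengthening.
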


\begin{proof}
Let $\rho$ be the natural projection from $G$ to $G_p$, and let
$D = \bigcup_{i=1}^m D_i$.
For each nonprincipal character $\wti{\chi} \in \wh{G_p}$ and its associated lifting character~$\chi \in \wh{G}$, Table~\ref{tab-charvalues} gives
\begin{align}
& \big(|\wti{\chi}(\rho(D))|^2,|\wti{\chi}(\rho(D_1))|^2\big) \nonumber \\
& \hspace{0em} = \big(|\chi(D)|^2,|\chi(D_1)|^2\big) =	
\begin{cases}
\bigg(\dfrac{4a^2\la}{b^2-a^2}, \, \dfrac{(b+a)\la}{b-a}\bigg) \mbox{ or }
\bigg(\dfrac{4a^2\la}{b^2-a^2}, \, \dfrac{(b-a)\la}{b+a}\bigg)
		& \mbox{for $\chi \in \wh{G}^N$}, \\[2ex]
(0,\la)		& \mbox{for $\chi \in \wh{G}^0$}.
\end{cases} 						\label{eqn-chirhoDD1}
\end{align}

\begin{description}
\item[Case 1: $\wti{\chi}(\rho(D)) =0$ for every nonprincipal character $\wti{\chi} \in \wh{G_p}$.]
Apply Proposition~\ref{prop-fourier} with $A=\rho(D)$ to obtain $\rho(D) = \frac{km}{|G_p|} G_p$, giving~(1a).
By \eqref{eqn-chirhoDD1}, we have
$|\wti{\chi}(\rho(D_1))|^2 =\la$ for every nonprincipal character $\wti{\chi} \in \wh{G_p}$.
Apply Proposition~\ref{prop-fourier} with $A = \rho(D_1)\rho(D_1)^{(-1)} = \sum_{g\in\wh{G_p}} c_g g$ to obtain $c_1 = \frac{1}{|G_p|}\big(k^2+(|G_p|-1)\la\big)$ and note that $c_1 \ge |D_1| = k$, giving~(1b).

\item[Case 2: $\wti{\chi}(\rho(D)) \ne 0$ for some nonprincipal character $\wti{\chi} \in \wh{G_p}$.]
By \eqref{eqn-chirhoDD1}, this $\wti{\chi}$ satisfies $|\wti{\chi}(\rho(D))|^2=\frac{4a^2\la}{b^2-a^2}$, which is an integer by Lemma~\ref{lem-abpair}~(3).
Apply Lemma~\ref{lem-chavalres} with $(H,E) = (G_p,\rho(D))$ and $u = |D| = km$ to give~(2a).
By \eqref{eqn-chirhoDD1}, this $\wti{\chi}$ also satisfies
$|\wti{\chi}(\rho(D_1))|^2 = \frac{(b+a)\la}{b-a}$ or $\frac{(b-a)\la}{b+a}$.
Then by \eqref{eqn-chiDjsqsim} there is some $j \ne 1$ for which
$$
\big\{ |\wti{\chi}(\rho(D_1))|^2, |\wti{\chi}(\rho(D_j))|^2 \big\}=
\bigg\{ \frac{(b+a)\la}{b-a}, \frac{(b-a)\la}{b+a} \bigg\},
$$
and both values are integers by Lemma~\ref{lem-abpair}~(2).
Apply Lemma~\ref{lem-chavalres} with $(H,E)=(G_p,\rho(D_1))$ and with $(H,E)=(G_p,\rho(D_j))$ to give (2b) and~(2c).
\end{description}
\end{proof}

By Proposition~\ref{prop-nonexistence}~(2), we know that a nontrivial $(v,m,k,\la)$-SEDF does not exist when $v$ is prime and $m>2$.
We now prove a nonexistence result when $v$ is a prime power and $m>2$.

\begin{theorem}\label{thm-primepower-scvp}

Let $G$ be a group of order $v = p^s$ where $p$ is an odd prime, and suppose that $2$ is self-conjugate modulo~$\exp(G)$. Suppose $\{D_1,D_2,\ldots,D_m\}$ is a nontrivial $(v,m,k,\la)$-SEDF in $G$ with $m>2$, having the simple character value property with respect to $(a,b)$. Then $a$ and $b$ are both odd.
\end{theorem}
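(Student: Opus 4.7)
The plan is to argue by contradiction. Since $\gcd(a,b)=1$, the conclusion that both $a$ and $b$ are odd fails precisely when $a+b$ is odd, so I will suppose $a+b$ is odd and show that $D=\bigcup_{i=1}^{m}D_i$ is forced to be either $\emptyset$ or all of $G$, contradicting $0<km<v$ (Lemma~\ref{lem-trivial}). The hypothesis that $2$ is self-conjugate modulo $\exp(G)=p^e$ implies, by \cite[Chapter VI, Corollary 15.5]{BJL}, that every prime ideal $\mathcal{P}$ of $\Z[\zpe]$ above $2$ is fixed by complex conjugation; and because $p$ is odd, $2$ is unramified in $\Z[\zpe]$. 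For any nonzero $X\in\Z[\zpe]$ we therefore have $v_2(X\overline{X})=v_{\mathcal{P}}(X\overline{X})=2v_{\mathcal{P}}(X)$, so $v_2(X\overline{X})$ must be even and $v_{\mathcal{P}}(X)=v_2(X\overline{X})/2$.

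Apply this principle with $X=\chi(D)$ for $\chi\in\wh{G}^N$. By Table~\ref{tab-charvalues}, $|\chi(D)|^2=\frac{4a^2\la}{b^2-a^2}$, and since $a+b$ is odd the integer $b^2-a^2=(b-a)(b+a)$ is odd, giving $v_2(|\chi(D)|^2)=2+2v_2(a)+v_2(\la)\ge 2$. Forcing this valuation to be even yields $v_2(\la)$ even, and $v_{\mathcal{P}}(\chi(D))=1+v_2(a)+v_2(\la)/2\ge 1$ for every prime $\mathcal{P}$ above $2$, hence $\chi(D)\in 2\Z[\zpe]$. Since also $\chi(D)=0\in 2\Z[\zpe]$ for $\chi\in\wh{G}^0$, we conclude that $\chi(D)\equiv 0\pmod{2}$ in $\Z[\zpe]$ for every nonprincipal character $\chi\in\wh{G}$.

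Write $D=\sum_{g\in G}c_g g$ with each $c_g\in\{0,1\}$ and apply Proposition~\ref{prop-fourier} to obtain $vc_g=km+\sum_{\chi\ne\chi_0}\chi(D)\overline{\chi(g)}$, with the sum on the right lying in $2\Z[\zpe]$. Because $vc_g-km$ is a rational integer in $2\Z[\zpe]$, it lies in $\Z\cap 2\Z[\zpe]=2\Z$; combined with $v=p^s$ odd this gives $c_g\equiv km\pmod{2}$ for every $g\in G$. Since $c_g\in\{0,1\}$ is thereby forced to be constant, $D=\emptyset$ (if $km$ is even) or $D=G$ (if $km$ is odd), both contradicting $0<km<v$. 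The main obstacle is upgrading the purely numerical divisibility $4\mid|\chi(D)|^2$ to the cyclotomic divisibility $\chi(D)\in 2\Z[\zpe]$, which is precisely the content of the self-conjugacy-plus-unramifiedness step; once this is in hand, the Fourier inversion argument closes the contradiction essentially automatically.
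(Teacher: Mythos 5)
Your proposal is correct and follows essentially the same route as the paper: the self-conjugacy of $2$ modulo $\exp(G)$ upgrades the divisibility $4 \mid |\chi(D)|^2$ (available because $b+a$ odd forces $(b^2-a^2)\mid\la$) to $\chi(D)\equiv 0 \pmod 2$ for every nonprincipal $\chi$, and Fourier inversion together with the oddness of $v=p^s$ then yields the contradiction. The only cosmetic difference is the endgame — the paper evaluates the identity coefficient of $D$ and of $G\setminus D$ to conclude that $km$ and $v-km$ are both even, whereas you show every coefficient $c_g$ is congruent to $km$ modulo $2$, forcing $D=\emptyset$ or $D=G$ — and both finishes are immediate once $\chi(D)\equiv 0 \pmod 2$ is established.
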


\begin{proof}
Suppose, for a contradiction, that $a$ and $b$ are not both odd. Since $\gcd(a,b)=1$, we therefore have $b+a$ odd and so $(b^2-a^2) \mid \la$ by Lemma~\ref{lem-abpair}~(3).
We shall show that this implies $km$ and $v-km$ are both even, contradicting that $v = p^s$ for odd~$p$.

Write $D=\bigcup_{i=1}^mD_i$ and use Table~\ref{tab-charvalues} to give
\begin{equation}\label{eqn-tab51scvp}
|\chi(D)|^2 = \frac{4a^2\la}{b^2-a^2} 	\quad 
		\mbox{for all nonprincipal $\chi \in \wh{G}^N$}.
\end{equation}
Since $(b^2-a^2) \mid \la$, this gives
\[
\chi(D)\ol{\chi(D)} \equiv 0 \pmod{2^2} \quad \mbox{for all nonprincipal $\chi \in \wh{G}^N$}.
\]
Since $2$ is self-conjugate modulo~$\exp(G)$, by \cite[Chapter VI, Lemma 13.2]{BJL}, we have
\begin{equation}\label{eqn-even}
\chi(D) \equiv 0 \pmod{2} \quad \mbox{for all nonprincipal $\chi \in \wh{G}^N$}.
\end{equation}

By taking a translate of $D$ if necessary, we may assume that $1 \notin D$.
Write $D=\sum_{g \in G} d_gg \in \Z[G]$. From Proposition~\ref{prop-fourier},
\begin{equation}\label{eqn-0dh}
0=vd_{1}=\sum_{\chi \in \wh{G}} \chi(D) = km+\sum_{\chi \in \wh{G}^N} \chi(D).
\end{equation}
Combining \eqref{eqn-even} and \eqref{eqn-0dh}, we find that $km$ is even.

To show that $v-km$ is even, repeat the above analysis with $D$ replaced by $G\sm D$, noting that $|\chi(G-D)|^2=|\chi(D)|^2$ for each nonprincipal $\chi \in \wh{G}$.
\end{proof}

We now illustrate the use of Theorem~\ref{thm-primepower-scvp} to rule out the existence of an $(81,6,12,9)$-SEDF and a $(6561,6,984,738)$-SEDF.

\begin{example}
Suppose, for a contradiction, that there exists an $(81,6,12,9)$-SEDF or there exists a $(6561,6,984,738)$-SEDF.
By Corollary~\ref{cor-scvp}~(2), these SEDFs have the simple character value property with respect to $(1,2)$.
Since $2$ is self-conjugate modulo $81$ and modulo $6561$, Theorem~\ref{thm-primepower-scvp} then gives the contradiction that $2$ is odd.
\end{example}

\subsection{Further nonexistence results}
In this subsection, we extend the analysis of Section~\ref{subsec-scvp} to the case of an SEDF for which the simple character value property does not necessarily hold.
Suppose that $\{D_1,D_2,\ldots,D_m\}$ is a nontrivial $(v,m,k,\la)$-SEDF in a group $G$ with $m>2$, and let $D = \bigcup_{i=1}^m D_i$. Suppose that $(\ac,\bc)$ as defined in \eqref{eqn-bcac} takes exactly $t\ge1$ distinct values as $\chi$ ranges over $\wh{G}^N$, so that
\[
\Big\{\sqrt{1+\tfrac{4\la}{|\chi(D)|^2}} \mid \chi \in \wh{G}^N \Big\} = \Big\{\frac{b_i}{a_i}\mid 1\le i \le t\Big\},
\]
where $a_i, b_i \in \Z$ and $b_i>a_i>0$ and $\gcd(a_i,b_i)=1$. Define
\[
S^+ = \bigg\{\Big(\frac{4a_i^2\la}{b_i^2-a_i^2},\, \frac{(b_i+a_i)\la}{b_i-a_i}\Big) \mid 1 \le i \le t\bigg\}, \quad \quad
S^- = \bigg\{\Big(\frac{4a_i^2\la}{b_i^2-a_i^2},\, \frac{(b_i-a_i)\la}{b_i+a_i}\Big) \mid 1 \le i \le t\bigg\}.
\]
Then from \eqref{eqn-chiDlaiff}, \eqref{eqn-G0defn} and \eqref{eqn-chiDjsqsim} we have
\begin{equation}\label{eqn-chiDD1t}
\begin{cases}
\big(|\chi(D)|^2, |\chi(D_1)|^2\big) \in S^+ \cup S^- 		
				& \mbox{for $\chi \in \wh{G}^N$}, \\
\big(|\chi(D)|^2, |\chi(D_1)|^2\big) = (0,\la)
				& \mbox{for $\chi \in \wh{G}^0$}.
\end{cases}
\end{equation}

The following result generalizes Theorem~\ref{thm-char-scvp}. The proof, which is omitted, uses \eqref{eqn-chiDD1t} in a similar manner to the use of \eqref{eqn-chirhoDD1} in the proof of Theorem~\ref{thm-char-scvp}.

\begin{theorem}\label{thm-char}
Suppose $\{D_1,D_2,\ldots,D_m\}$ is a nontrivial $(v,m,k,\la)$-SEDF in a group $G$ with $m>2$, let $p$ be a prime divisor of $v$.
Suppose that $(\ac,\bc)$ as defined in \eqref{eqn-bcac} takes values in the set $\{(a_i,b_i) \mid 1 \le i \le t\}$ as $\chi$ ranges over $\wh{G}^N$ (defined with reference to the set $D = \bigcup_{i=1}^m D_i$).
Then either the following both hold:

\begin{enumerate}
\item[(1a)]
$|G_p|$ divides $km$,
\item[(1b)]
$k^2+(|G_p|-1)\la = |G_p|r_1$ for some integer $r_1 \ge k$.
\end{enumerate}
or, for some $i$ satisfying $1 \le i \le t$, the following all hold:
\begin{enumerate}
\item[(2a)]
$k^2m^2+(p-1)\frac{4a_i^2\la}{b_i^2-a_i^2} = pr_{2,i}$ for some integer $r_{2,i} \ge km$,
\item[(2b)]
$k^2+(p-1)\frac{(b_i-a_i)\la}{b_i+a_i} = pr_{3,i}$ for some integer $r_{3,i} \ge k$,
\item[(2c)]
$k^2+(p-1)\frac{(b_i+a_i)\la}{b_i-a_i} = pr_{4,i}$ for some integer $r_{4,i}$.
\end{enumerate}

\end{theorem}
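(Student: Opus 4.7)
The plan is to mimic the proof of Theorem~\ref{thm-char-scvp} almost verbatim, using \eqref{eqn-chiDD1t} in place of \eqref{eqn-chirhoDD1}. The only substantive difference is that the pair $(|\chi(D)|^2,|\chi(D_1)|^2)$ is no longer constrained to a single pair of values on $\wh{G}^N$; instead, for each nonprincipal $\chi$ it lies in $S^+\cup S^-$, and this still pins down a single index $i\in\{1,2,\dots,t\}$. All the bookkeeping then proceeds relative to that one value of $i$.

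First I would fix the natural projection $\rho\colon G\to G_p$ and observe that, for each nonprincipal $\wti{\chi}\in\wh{G_p}$ with associated lifting character $\chi\in\wh{G}$, the identity $\wti{\chi}(\rho(A))=\chi(A)$ applied to $A=D$ and $A=D_1$ gives
\[
\big(|\wti{\chi}(\rho(D))|^2,\,|\wti{\chi}(\rho(D_1))|^2\big)\in(S^+\cup S^-)\cup\{(0,\la)\}.
\]
I would then split into two cases exactly as in Theorem~\ref{thm-char-scvp}.

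In Case~1, $\wti{\chi}(\rho(D))=0$ for every nonprincipal $\wti{\chi}\in\wh{G_p}$. Applying Proposition~\ref{prop-fourier} to $\rho(D)$ in $\Z[G_p]$ forces $\rho(D)=\frac{km}{|G_p|}G_p$, so each coefficient is a non-negative integer and (1a) holds. By \eqref{eqn-chiDD1t}, every nonprincipal $\wti{\chi}\in\wh{G_p}$ then satisfies $|\wti{\chi}(\rho(D_1))|^2=\la$. Applying Proposition~\ref{prop-fourier} to $\rho(D_1)\rho(D_1)^{(-1)}$ and reading off the coefficient of the identity (which is at least $|D_1|=k$, since $\rho(D_1)$ has non-negative integer coefficients summing to $k$) gives (1b).

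In Case~2, some nonprincipal $\wti{\chi}\in\wh{G_p}$ has $\wti{\chi}(\rho(D))\neq 0$. By \eqref{eqn-chiDD1t} this pair lies in $S^+\cup S^-$ and therefore determines an index $i\in\{1,\dots,t\}$ with $|\wti{\chi}(\rho(D))|^2=\frac{4a_i^2\la}{b_i^2-a_i^2}$, an integer by Lemma~\ref{lem-abpair}~(3). Applying Lemma~\ref{lem-chavalres} with $(H,E)=(G_p,\rho(D))$ and $u=km$ yields (2a). For the same $\wti{\chi}$, the value $|\wti{\chi}(\rho(D_1))|^2$ equals one of $\frac{(b_i+a_i)\la}{b_i-a_i}$ or $\frac{(b_i-a_i)\la}{b_i+a_i}$; and by \eqref{eqn-chiDjsqsim} (with the same $\chi$, hence the same index $i$) the other value of the pair is attained by $|\wti{\chi}(\rho(D_j))|^2$ for some $j\neq 1$. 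Applying Lemma~\ref{lem-chavalres} with $(H,E)=(G_p,\rho(D_1))$ and $(H,E)=(G_p,\rho(D_j))$, each with $u=k$, produces (2b) and (2c). Both integers in Lemma~\ref{lem-abpair}~(2) are used here to ensure the hypothesis of Lemma~\ref{lem-chavalres} is met.

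The only point requiring care is that (2a)--(2c) must all refer to a \emph{common} index $i$; this is automatic because they are derived from the same character $\wti{\chi}$, which fixes the pair $(a_i,b_i)$ via \eqref{eqn-bcac}. Thus I expect no essential obstacle beyond verifying that integrality hypotheses of Lemma~\ref{lem-chavalres} are supplied by Lemma~\ref{lem-abpair}, and that the bound $u$ in each application matches $|D|=km$ or $|D_1|=|D_j|=k$ as appropriate.
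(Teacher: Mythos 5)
Your proof is correct and is exactly the argument the paper intends: it mimics the proof of Theorem~\ref{thm-char-scvp} with \eqref{eqn-chiDD1t} in place of \eqref{eqn-chirhoDD1}, with the single witnessing character $\wti{\chi}$ fixing the common index $i$ in Case~2 and the integrality hypotheses of Lemma~\ref{lem-chavalres} supplied by Lemma~\ref{lem-abpair}. No gaps.
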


We now illustrate the use of Theorem~\ref{thm-char} to rule out the existence of a (676,26,18,12)-SEDF and a (2401,37,60,54)-SEDF.

\begin{example}
Suppose, for a contradiction, that there exists a $(676,26,18,12)$-SEDF.
By Lemma~\ref{lem-abpair},
$$
(\ac,\bc) \in \{ (1,2),(1,3) \} \quad \mbox{for all $\chi \in \wh{G}^N$}.
$$
For each of these possible values of $(\ac,\bc)$,
both (1a) and (2a) of Theorem~\ref{thm-char} fail with $p=13$, giving the required contradiction.
\end{example}

\begin{example}
Suppose, for a contradiction, that there exists a $(2401,37,60,54)$-SEDF.
By Lemma~\ref{lem-abpair},
$$
(\ac,\bc) \in \{ (1,5),(5,7) \} \quad \mbox{for all $\chi \in \wh{G}^N$}.
$$
We cannot have $(\ac,\bc) = (1,5)$ for $\chi \in \wh{G}^N$, otherwise both (1a) and (2a) of Theorem~\ref{thm-char} fail with $p=7$.
Therefore $(\ac,\bc) = (5,7)$ for all $\chi \in \wh{G}^N$, and so the SEDF satisfies the simple character value property with respect to $(5,7)$.
Theorem~\ref{thm-simchar} then gives $|\wh{G}^0|=\frac{9212}{15}$, which contradicts that $|\wh{G}^0|$ is an integer.
\end{example}

We now extend the nonexistence result of Theorem~\ref{thm-primepower-scvp} for $v$ a prime power and $m>2$.

\begin{theorem}\label{thm-primepower}
Let $v=p^s$ for a prime $p$, and let $\ac,\bc$ be defined as in \eqref{eqn-bcac} with reference to the set $D=\bigcup_{i=1}^m D_i$ associated with a nontrivial $(v,m,k,\la)$-SEDF $\{D_1,D_2,\dots,D_m\}$ in a group $G$ with $m>2$. Let
\begin{equation}\label{eqn-TcUc}
T_\chi = \bigg\{\frac{4\ac^2\la}{\bc^2-\ac^2},\, \frac{(\bc+\ac)\la}{\bc-\ac},\, \frac{(\bc-\ac)\la}{\bc+\ac}\bigg\} \quad \mbox{and} \quad
U_\chi= \begin{cases}	T_\chi			& \mbox{if $|\wh{G}^0| = 0$}, \\
		  	T_\chi \cup \{\la\}	& \mbox{if $|\wh{G}^0|>0$}.
    	\end{cases}
\end{equation}
For each $u \in U_\chi$, if $q$ is a prime divisor of $u$ and $q$ is a primitive root modulo $p^s$, then $q^f \mid\mid u$ for some even $f$.
\end{theorem}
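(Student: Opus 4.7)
The plan is to realize each $u \in U_\chi$ as $|\psi(A)|^2 = \psi(A)\overline{\psi(A)}$ for a suitable nonprincipal character $\psi \in \wh{G}$ and suitable $A \in \{D, D_1, \dots, D_m\}$, and then to invoke the $X = X'$ case of Lemma~\ref{lem-XX'} with $X = \psi(A)$ to force the $q$-adic valuation $f$ of $u$ to be even. Because $|G| = v = p^s$, every character $\psi \in \wh{G}$ has order $p^{e'}$ for some $e' \le s$, and hence $\psi(A) \in \Z[\zeta_{p^{e'}}]$. Moreover, since $q$ is a primitive root modulo $p^s$, the standard lifting result \cite[Chapter~4, Lemma~3]{IR} (already used in the proof of Theorem~\ref{thm-exp}) implies that $q$ is also a primitive root modulo $p^{e'}$ for every $e' \le s$, so Lemma~\ref{lem-XX'} is applicable in $\Z[\zeta_{p^{e'}}]$.

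Next I would exhibit the realization for each possible $u \in U_\chi$. If $u = |\chi(D)|^2 = \tfrac{4\ac^2\la}{\bc^2-\ac^2}$, take $(\psi,A) = (\chi, D)$. If $u$ equals either of the two remaining elements of $T_\chi$, namely $\tfrac{(\bc+\ac)\la}{\bc-\ac}$ or $\tfrac{(\bc-\ac)\la}{\bc+\ac}$, then \eqref{eqn-chiDjsqsim} guarantees that $u = |\chi(D_j)|^2$ for some $j$, and I take $(\psi,A) = (\chi, D_j)$ for that~$j$. Finally, if $u = \la$ (which lies in $U_\chi$ only when $|\wh{G}^0| > 0$), pick any $\psi \in \wh{G}^0$; by \eqref{eqn-chiDlaiff} such a $\psi$ satisfies $|\psi(D_1)|^2 = \la$, so take $A = D_1$. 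In every case, $\psi(A) \ne 0$ since $u > 0$, and $\psi$ is nonprincipal so Lemma~\ref{lem-XX'} does apply nontrivially.

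The final step is to apply Lemma~\ref{lem-XX'} with $X = X' = \psi(A)$ and $\psi(A)\overline{\psi(A)} = u$. Since $q^f \mid\mid u$ and $q$ is a primitive root modulo $p^{e'}$, the lemma immediately yields that $f$ is even, as required. There is no substantial obstacle in this proof: the whole argument consists of identifying each value in $U_\chi$ as a squared character magnitude so that the self-conjugacy conclusion of Lemma~\ref{lem-XX'} may be applied directly, together with the bookkeeping remark that primitivity of $q$ modulo $p^s$ descends to primitivity modulo $p^{e'}$ for the order $p^{e'}$ of the relevant character.
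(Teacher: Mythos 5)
Your proof is correct and takes essentially the same route as the paper's: realize each $u \in U_\chi$ as a squared character value $|\psi(A)|^2$ for a nonprincipal character and a suitable subset $A \in \{D, D_1,\dots,D_m\}$, observe that primitivity of $q$ modulo $p^s$ descends to the order of the relevant character, and apply the $X=X^{\pr}$ case of Lemma~\ref{lem-XX'} to force the $q$-adic valuation of $u$ to be even. Your explicit treatment of $u=\la$ via a character in $\wh{G}^0$ merely spells out a point the paper handles implicitly.
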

\begin{proof}
For each $u \in U_\chi$, by \eqref{eqn-chiDlaiff}, \eqref{eqn-G0defn} and \eqref{eqn-chiDjsqsim} there is a subset $E_\chi$ of $G$ for which $|\chi(E_\chi)|^2 = u$.
Since $|G|=p^s$, we have $\exp(G)=p^e$ for some integer~$e \le s$ and so $ \chi(E_\chi) \in \Z[\ze_{p^e}]$.
Now if $q$ is a primitive root modulo $p^s$, then $q$ is a primitive root modulo $p^e$. Apply Lemma~\ref{lem-XX'} with $X = \chi(E_\chi)$.

\end{proof}

We now illustrate the use of Theorem~\ref{thm-primepower} to rule out the existence of a $(6561,42,120,90)$-SEDF.

\begin{example}
Suppose, for a contradiction, that there exists a $(6561,42,120,90)$-SEDF.
By Lemma~\ref{lem-abpair},
\[
(\ac,\bc) \in \{(1,2),(1,4),(1,5),(4,5)\} \quad \mbox{for all $\chi \in \wh{G}^N$}.
\]
Since $5$ is a primitive root modulo $6561$, by Theorem~\ref{thm-primepower} we cannot have $(\ac,\bc) \in \{(1,2),(1,5),(4,5)\}$ otherwise the set $T_\chi$ defined in \eqref{eqn-TcUc} contains an element $u$ for which $5 \mid \mid u$.
Therefore $(\ac,\bc) = (1,4)$ for all $\chi \in \wh{G}^N$, and so the SEDF satisfies the simple character value property with respect to $(1,4)$.
Since $2$ is self-conjugate modulo $6561$, Theorem~\ref{thm-primepower-scvp} then gives the contradiction that $4$ is odd.
\end{example}

\begin{remark}\label{rem-open}
Combination of the nonexistence results of
Proposition~\ref{prop-nonexistence}~(4),
Lemmas~\ref{lem-trivial},~\ref{lem-abpair}, and
Theorems~\ref{thm-exp},~\ref{thm-simchar},~\ref{thm-char-scvp},~\ref{thm-primepower-scvp},~\ref{thm-char},~\ref{thm-primepower}, shows that there is no nontrivial $(v,m,k,\la)$-SEDF for $v \le 10^5$ and $m \in \{5,6\}$; and that for $v \le 10^4$ and $m>2$ there are only $70$ possible parameter sets for a nontrivial $(v,m,k,\la)$-SEDF that is not near-complete, namely:
\begin{align*}
 \{&( 540, 12, 42, 36 ),( 784, 30, 18, 12 ),( 1089, 35, 24, 18 ),( 1540, 77, 18, 16 ),( 1701, 35, 30, 18 ),\\
    &( 1701, 35, 40, 32 ),( 2058, 86, 22, 20 ),( 2376, 11, 190, 152 ),( 2401, 7, 280, 196 ),( 2401, 9, 60, 12 ), \\
    &( 2401, 9, 120, 48 ),( 2401, 9, 180, 108 ),( 2401, 9, 240, 192 ),( 2401, 16, 120, 90 ),( 2401, 37, 40, 24 ),\\
    &( 2401, 65, 30, 24 ),( 2500, 18, 105, 75 ),( 2500, 35, 42, 24 ),( 2500, 52, 42, 36 ),( 2601, 53, 40, 32 ), \\
    &( 2625, 42, 48, 36 ),( 2646, 16, 138, 108 ),( 2784, 116, 22, 20 ),( 3025, 57, 36, 24 ),( 3381, 23, 130, 110 ), \\
    &( 3888, 24, 156, 144 ),( 3888, 47, 52, 32 ),( 3888, 47, 78, 72 ),( 3969, 32, 112, 98 ),( 4096, 8, 390, 260 ), \\
    &( 4096, 14, 105, 35 ),( 4096, 14, 210, 140 ),( 4225, 67, 48, 36 ),( 4375, 7, 162, 36 ),( 4375, 7, 324, 144 ),\\
    &( 4375, 7, 486, 324 ),( 4375, 7, 540, 400 ),( 4375, 9, 405, 300 ),( 4375, 16, 270, 250 ),( 4375, 37, 108, 96 ), \\
    &( 4375, 37, 54, 24 ),( 4375, 37, 81, 54 ),( 4564, 163, 26, 24 ),( 4625, 37, 68, 36 ),( 5376, 44, 75, 45 ), \\
    &( 5376, 44, 100, 80 ),( 5776, 78, 60, 48 ),( 5832, 8, 595, 425 ),( 5832, 8, 714, 612 ),( 5832, 18, 147, 63 ), \\
    &( 5832, 18, 294, 252 ),( 5832, 35, 98, 56 ),( 5832, 86, 49, 35 ),( 5888, 92, 58, 52 ),( 6400, 80, 54, 36 ), \\
    &( 6656, 26, 121, 55 ),( 6656, 26, 242, 220 ),( 6860, 20, 266, 196 ),( 6860, 58, 95, 75 ),( 6976, 218, 30, 28 ), \\
    &( 8281, 93, 60, 40 ),( 8625, 23, 140, 50 ),( 8625, 23, 280, 200 ),( 8960, 7, 1054, 744 ),( 8960, 32, 238, 196 ), \\
    &( 9801, 13, 420, 216 ),( 9801, 26, 308, 242 ),( 9801, 57, 140, 112 ),( 9801, 101, 70, 50 ),( 9801, 101, 84, 72 ) \}.
\end{align*}
\end{remark}

In Section~\ref{sec-exp-bound} we proved the exponent bound of Theorem~\ref{thm-exp} using only information about the SEDF parameters $(v,m,k,\la)$, and applied it to the case $m=2$. We now derive a different exponent bound that uses information about the possible values of $|\chi(D)|^2$ and $|\chi(D_1)|^2$, and apply it to two of the open cases with $m>2$ given in Remark~\ref{rem-open}.

\begin{theorem}\label{thm-exp2}
Suppose $\{D_1,D_2,\ldots,D_m\}$ is a $(v,m,k,\la)$-SEDF in a group~$G$, let $D = \bigcup_{i=1}^m D_i$, and let $p$ be a prime dividing $v$.
Suppose $U$ is a subgroup of $G$ for which $U \cap G_p = \{1\}$ and $p$ is self-conjugate modulo $\exp(G/U)$.
\begin{enumerate}[(1)]
\item If $|\chi(D)|^2 \equiv0 \pmod{p^{2d}}$ for every nonprincipal $\chi \in \wh{G}$, then \\
$\exp(G_p)\le \max\bigg\{ \dfrac{|U|}{p^d}|G_p|,\dfrac{p\,|\wh{G}^0|\cdot |U|}{(p-1)v}|G_p|\bigg\}$.
\item If $|\chi(D_1)|^2 \equiv0 \pmod{p^{2d}}$ for every nonprincipal $\chi \in \wh{G}$, then $\exp(G_p)\le \dfrac{|U|}{p^d}|G_p|$.
\end{enumerate}
\end{theorem}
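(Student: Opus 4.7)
The plan is to adapt the strategy of Theorem~\ref{thm-exp}, replacing the primitive-root hypothesis with self-conjugacy and routing everything through the quotient $G/U$.

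First I would use self-conjugacy to pass from the multiplicative hypothesis to an additive one. For any $\chi \in \wh G$ that is trivial on $U$, $\chi$ descends to a character of $G/U$; since $p$ is self-conjugate modulo $\exp(G/U)$, the standard lemma (e.g., \cite[Chapter~VI, Lemma~13.2]{BJL}) converts the hypothesis $|\chi(X)|^2 \equiv 0 \pmod{p^{2d}}$ into the divisibility $\chi(X) \equiv 0 \pmod{p^d}$ in $\Z[\ze_{\exp(G/U)}]$, where $X=D$ in part~(1) and $X=D_1$ in part~(2).

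Next I would let $H$ be a cyclic direct factor of $G_p$ of order $p^e = \exp(G_p)$ and consider the natural projection $\pi\colon G\to H$. Since $|U|$ is coprime to $p$ while $H$ is a $p$-group, the hypothesis $U\cap G_p=\{1\}$ forces $U\subseteq \ker\pi$, so $\pi$ factors through $G/U$. Every nonprincipal character of $H$ therefore lifts to a character of $G$ trivial on $U$, and the previous step gives $\tilde\chi(\pi(X))\equiv 0\pmod{p^d}$ in $\Z[\zpe]$ for every nonprincipal $\tilde\chi\in\wh H$. Applying Fourier inversion in $H$ (Proposition~\ref{prop-fourier}) and writing $\pi(X)=\sum_{h\in H}t_h h$ then yields
\[
p^e t_h \equiv |X| \pmod{p^d} \quad \text{for each } h\in H.
\]

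For part~(2), I would first rule out the uniform case: if all $t_h$ were equal, then $\pi(D_1)$ would be a scalar multiple of $H$ and hence $\chi(D_1)=0$ for every nonprincipal $\chi$ of $G$ lifting from $\wh H$; but the SEDF identity \eqref{eqn-chardef} with $\la>0$ gives $|\chi(D_1)|^2>0$ for every nonprincipal $\chi$, a contradiction. Hence some pair $t_h\ne t_{h'}$ exists, and the congruence forces their gap to be a positive multiple of $p^{\max(0,d-e)}$.

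The main obstacle is to match this gap against a sharp bound on $|t_h-t_{h'}|$ that produces the factor $|U|\cdot|G_p|/p^e$ in the conclusion rather than the crude $v/p^e$. The refinement exploits that each fiber of $\pi$ decomposes into $v/(p^e|U|)$ cosets of $U$, each contributing at most $|U|$ elements to $D_1$, combined with the $p$-local structure forced by working in $\Z[G/U]$: tracking the $|G_p|/p^e$ cosets of the complement of $H$ inside $G_p$ (the ``$p$-part'' of each fiber) yields the tighter estimate. Combined with the gap, this gives $p^{e+d}\le |U|\cdot|G_p|$, equivalent to the stated bound. Part~(1) proceeds analogously with $X=D$; the additional term in the maximum arises from a Parseval-type separation of the Fourier sum $\sum_{\chi\in\wh G}|\chi(D)|^2=vkm$ into its contributions from $\chi_0$, from the $|\wh G^0|$ characters (which contribute $0$ exactly, not merely $0\bmod p^d$), and from $\wh G^N$, yielding the alternative bound $\tfrac{p|\wh G^0|\cdot|U|}{(p-1)v}|G_p|$ when the first inequality in the maximum is not the binding one.
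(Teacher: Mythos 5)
There is a genuine gap at the core quantitative step. Your plan projects onto a cyclic direct factor $H \le G_p$ of order $p^e=\exp(G_p)$, but this projection forgets exactly the structure needed for the stated bound: after passing to $\pi(X)=\sum_h t_h h$ you only know $0\le t_h\le v/p^e$ (the full fiber size) and the congruence $p^e t_h\equiv |X| \pmod{p^d}$, whose gap information ($p^{\max(0,d-e)}$ dividing $t_h-t_{h'}$) combined with the crude coefficient bound yields at best $p^d\le v$, far weaker than $\exp(G_p)\le \frac{|U|}{p^d}|G_p|$. The ``refinement'' you invoke --- splitting each fiber into $U$-cosets and ``tracking the $|G_p|/p^e$ cosets of the complement of $H$'' --- is not an argument: once you have projected all the way down to $H$, the distribution of $D_1$ (or $D$) over $U$-cosets and over the non-$p$ part of $G$ is no longer visible, so no bound of the form $p^d\le |U|\,|G_p|/p^e$ can be extracted from $\pi(X)$ alone. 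A telltale sign is that in your setup the character values lie in $\Z[\zpe]$, where $p$ is automatically self-conjugate, so the hypothesis that $p$ is self-conjugate modulo $\exp(G/U)$ is never genuinely used; the theorem's strength comes precisely from working in a larger quotient where that hypothesis has content. The paper instead chooses $W\le G_p$ with $G_p/W$ cyclic of order $\exp(G_p)$, sets $H=U\times W$, and projects onto $G/H$: there the coefficients of $\rho(D)$ are bounded by $|H|=|U|\,|G_p|/\exp(G_p)$, self-conjugacy modulo $\exp(G/H)=\exp(G/U)$ gives $\wti{\chi}(\rho(D))\equiv 0\pmod{p^d}$, and Ma's Lemma \cite{Ma} yields the dichotomy $\rho(D)=p^dX_0+PX_1$ with $P$ the unique subgroup of order $p$ of $G/H$.

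Your treatment of the two conclusions also misses the actual mechanism. The second term in the maximum in part (1) does not come from a ``Parseval-type separation'' of $\sum_\chi|\chi(D)|^2$ (characters in $\wh{G}^0$ contribute $0$ to that sum, so Parseval gives no lower bound on $|\wh{G}^0|$); it comes from the case $X_0=0$ of Ma's Lemma, where $\rho(D)=PX_1$ forces every character of $G/H$ nonprincipal on $P$ --- there are $\frac{|G|}{|H|}\bigl(1-\frac{1}{p}\bigr)$ of them --- to annihilate $D$, giving $|\wh{G}^0|\ge \frac{|G|}{|H|}\bigl(1-\frac1p\bigr)$, which rearranges to the stated bound. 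Your observation for part (2) that $\chi(D_1)\neq 0$ for all nonprincipal $\chi$ (by \eqref{eqn-chardef}) is correct and is indeed the reason the degenerate case is impossible there, but without the $G/H$-level coefficient bound and Ma's Lemma dichotomy the proof does not go through in either part.
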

\begin{proof}
The proof is analogous to that of \cite[Chapter VI, Theorem 15.11]{BJL}. We prove only (1); the proof of (2) is similar.

Let $W$ be a subgroup of $G_p$ for which $G_p/W$ is cyclic of order $\exp(G_p)$.
It follows from $U \cap G_p= \{1\}$ that $U \cap W=\{1\}$, and so we may write $H = U \times W$. Since $\exp(G_p/W)=\exp(G_p)$, we then have $\exp(G/H)=\exp(G/U)$. Let $\rho$ be the canonical epimorphism $\rho: G \rightarrow G/H$. Then by assumption,
$$
|\wti{\chi}(\rho(D))|^2 \equiv 0 \pmod{p^{2d}}
\quad \mbox{for every nonprincipal $\wti{\chi}\in\wh{G/H}$}.
$$
Since $p$ is self-conjugate modulo $\exp(G/H)$, this implies \cite[Chapter VI, Lemma 13.2]{BJL}
\[
\wti{\chi}(\rho(D)) \equiv 0 \pmod{p^d}
\quad \mbox{for every nonprincipal $\wti{\chi}\in\wh{G/H}$},
\]
and then by Ma's Lemma \cite{Ma} we have
$$
\rho(D)=p^dX_0+PX_1,
$$
where $X_0,X_1 \in \Z[G/H]$ have non-negative coefficients and $P$ is the unique subgroup of $G/H$ of order~$p$.

In the case $X_0 \ne 0$, we have $p^d \le |H|=|U|\cdot|W|=|U|\cdot\frac{|G_p|}{\exp(G_p)}$, which rearranges to $\exp(G_p) \le \frac{|U|}{p^d}|G_p|$.

Otherwise, in the case $X_0=0$, we have $\rho(D)=PX_1$. Now consider the $\frac{|G|}{|H|}(1-\frac{1}{p})$ characters $\wti{\chi} \in \wh{G/H}$ which are nonprincipal on $P$. Each such character satisfies $\wti{\chi}(\rho(D)) = 0$, and its associated lifting character $\chi \in \wh{G}$ satisfies $\chi(D)=0$. Therefore by the definition \eqref{eqn-G0defn} of $\wh{G^0}$, we have $|\wh{G}^0| \ge \frac{|G|}{|H|}(1-\frac{1}{p})$, which implies $\exp(G_p) \le \frac{p\,|\wh{G}^0|\cdot|U|}{(p-1)v}|G_p|$.
\end{proof}

We now illustrate the use of Theorem~\ref{thm-exp2} to obtain an exponent bound on a group containing a $(2401,7,280,196)$-SEDF and a group containing a $(5832,8,595,425)$-SEDF.

\begin{example}
Suppose there exists a $(2401,7,280,196)$-SEDF in a group $G$. Note that $196=2^2\cdot7^2$ and that neither $2$ nor $7$ is a primitive root modulo $7^4=2401$, so Theorem~\ref{thm-exp} does not apply. However, by Lemma~\ref{lem-abpair} the SEDF satisfies the simple character value property with respect to $(a,b)=(3,5)$, and so from Table~\ref{tab-charvalues} we have $|\chi(D_1)|^2 \in \{7^2,\, 4\cdot 7^2,\, 16 \cdot 7^2\}$ for every nonprincipal $\chi \in \wh{G}$.
Since $7$ is self-conjugate modulo $2401$, we may apply Theorem~\ref{thm-exp2}~(2) with $(p,d)=(7,1)$ and $U = \{1\}$ to show that $\exp(G)\le 7^3$.
\end{example}

\begin{example}
Suppose there exists a $(5832,8,595,425)$-SEDF in $G$. Note that $5832 = 2^3 \cdot 3^6$ and $425 = 5^2 \cdot 17$. Theorem~\ref{thm-exp} does not give any constraint on the structure of $G$ (even though it may be applied with $(p,q)=(3,5)$).
By Lemma~\ref{lem-abpair}, the SEDF satisfies the simple character value property with respect to $(2,3)$, and so from Table~\ref{tab-charvalues} we have $|\chi(D)|^2 \in \{0,\, 2^4\cdot5\cdot17\}$ and $|\chi(D_1)|^2 \in \{5\cdot17,\, 5^2\cdot17,\, 5^3\cdot17\}$ for every nonprincipal $\chi \in \wh{G}$, and
$|\wh{G}^0|=2079$ from Theorem~\ref{thm-simchar}.
In this case Theorem~\ref{thm-exp2}~(2) does not apply. However,
because $2$ is self-conjugate modulo $2^3 \cdot 3^6$,
we may apply Theorem~\ref{thm-exp2}~(1) with $(p,d)=(2,2)$ and $U =\{1\}$ to obtain
$\exp(G_2)\le \max{\{2,\frac{154}{27}\}} < 2^3$.
Therefore $\exp(G_2)\le 2^2$.
\end{example}

\section{Concluding remarks}
\label{sec-conc}
We have presented a comprehensive treatment of SEDFs, using character theory and algebraic number theory to derive many nonexistence results. We have characterized the parameters of a nontrivial near-complete SEDF, and constructed a $(243,11,22,20)$-SEDF in $\Z_3^5$ from a detailed analysis of the action of the Mathieu group $M_{11}$ on the points of the projective geometry $PG(4,3)$. This is the first known nontrivial example of SEDF with $m>2$.

As we were finalizing our paper, Wen, Yang and Feng posted a preprint \cite{WYF} in which they independently constructed a $(243,11,22,20)$-SEDF in $\Z_3^5$ using cyclotomic classes over $\F_{3^5}$. Their method, which was also used to construct some generalizations of SEDFs~\cite{WYFF}, is very different from ours.

In closing, we note that until now SEDFs have been considered only in abelian groups. We ask: are there examples of nontrivial SEDFs in nonabelian groups?

\section*{Acknowledgements}
We are grateful to Ruizhong Wei for kindly supplying a preprint of the paper~\cite{BJWZ}. We thank the referee for providing very careful and helpful comments.


\end{document}